\newtheorem{theorem}{Theorem}[section]
\newtheorem{lemma}[theorem]{Lemma}
\newtheorem{prop}[theorem]{Proposition}
\newtheorem{cor}[theorem]{Corollary}
\newtheorem{con}[theorem]{Conjecture}
\theoremstyle{definition}
\newtheorem{prob}[theorem]{Problem}
\theoremstyle{remark}
\newtheorem{remark}[theorem]{Remark}
\numberwithin{equation}{section}
\begin{document}

\title[Classification of special Anosov endomorphisms]{Classification of special Anosov endomorphisms of nil-manifolds}

\author[S.M.Moosavi]{Seyed Mohsen Moosavi}
\address{Department of Mathematics, Faculty of Mathematical Sciences, Tarbiat Modares University,
Tehran 14115-134, Iran}
\email{seyedmohsen.moosavi@modares.ac.ir, smohsenmoosavi2009@gmail.com}
\author[K.Tajbakhsh]{Khosro Tajbakhsh}
\address{Department of Mathematics, Faculty of Mathematical Sciences, Tarbiat Modares University,
Tehran 14115-134, Iran}
\email{khtajbakhsh@modares.ac.ir, arash@cnu.ac.kr}

\subjclass[2010]{Primary 37D05, 37D20} 
\keywords{Hyperbolicity, Anosov endomorphisms, special Anosov endomorphisms, TA maps, nil-manifolds}
\date{}

\begin{abstract}In this paper we give a classification of special endomorphisms of nil-manifolds: Let $f:N/\Gamma \rightarrow N/\Gamma$ be a covering map of a nil-manifold and denote by $A:N/\Gamma \rightarrow N/\Gamma$ the nil-endomorphism which is homotopic to $f$. If $f$ is a special $TA$-map, then $A$ is a hyperbolic nil-endomorphism and $f$ is topologically conjugate to $A$.
\end{abstract}

\maketitle

\section{Introduction}
Finding a universal model for Anosov diffeomorphisms has been an important problem in dynamical systems. In this general context, Franks and Manning proved that every Anosov diffeomorphism of an infra-nil-manifold is topologically conjugate to a hyperbolic infra-nil-automorphism \cite{[Fr 1],[Fr 2],[Ma 1],[Ma 2]} (According to Dekimpe's work \cite{[KD2]}, some of their results are incorrect). Based on this result, Aoki and Hiraide has been studied the dynamics of covering maps of a torus \cite{[Ao-Hi]}. The importance of infra-nil-manifolds comes from the following  Conjecture \ref{con} and Theorem \ref{thm} :

The first non-toral example of an Anosov diffeomorphism was constructed by S. Smale in \cite{[Sm]}. He conjectured that, up to topologically conjugacy, the construction in Smale's example gives every possible Anosov diffeomorphism on a closed manifold.

\begin{con}\label{con}
Every Anosov diffeomorphism of a closed manifold is topologically conjugate to a hyperbolic affine infra-nil-automorphism.
\end{con}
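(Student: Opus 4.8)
The plan is to reduce Conjecture~\ref{con} to the theorem of Franks and Manning \cite{[Fr 1],[Fr 2],[Ma 1],[Ma 2]}: on an infra-nil-manifold every Anosov diffeomorphism is already known to be topologically conjugate to a hyperbolic affine infra-nil-automorphism, so the entire content of the conjecture is the purely topological assertion that a closed manifold $M$ carrying an Anosov diffeomorphism $f$ is homeomorphic to an infra-nil-manifold. I would begin by lifting $f$ to a map $\tilde f$ of the universal cover $\widetilde M$, recording the induced automorphism $f_{*}$ of $\pi_{1}(M)$, and exploiting the two transverse foliations $W^{s},W^{u}$ with contractible leaves supplied by hyperbolicity together with the local product structure they generate.

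The central step is to prove that $\pi_{1}(M)$ has polynomial growth. The mechanism should be geometric: iterating $f$ uniformly expands along $W^{u}$ and contracts along $W^{s}$, which rigidly constrains the large-scale shape of metric balls in $\widetilde M$ and forces their volume to grow polynomially; the standard comparison between volume growth in the universal cover and word growth in the fundamental group then shows $\pi_{1}(M)$ is of polynomial growth. By Gromov's polynomial growth theorem $\pi_{1}(M)$ is virtually nilpotent, hence is isomorphic to the fundamental group of some infra-nil-manifold $N/\Gamma$.

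Next I would upgrade this computation of $\pi_{1}$ to an actual homeomorphism $M\cong N/\Gamma$. The product-like structure of $\widetilde M$ coming from the two contractible foliations should show that $M$ is aspherical; then $M$ and $N/\Gamma$ are closed aspherical manifolds with isomorphic virtually nilpotent fundamental groups, and the Borel-type rigidity available for infra-nil-manifolds (Farrell--Jones) yields the required homeomorphism. Once $M$ is identified with $N/\Gamma$ the Franks--Manning argument applies, and, exactly as in the toral case analysed by Aoki and Hiraide \cite{[Ao-Hi]}, one extracts the topological conjugacy to a hyperbolic affine infra-nil-automorphism.

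The hard part --- and the reason Conjecture~\ref{con} is still open --- is the polynomial growth step. There is at present no argument deriving polynomial, or even subexponential, growth of $\pi_{1}(M)$ from hyperbolicity alone, and even the natural preliminary reduction is unavailable, since it is not known whether every Anosov diffeomorphism is transitive. The known results are partial: Franks and Newhouse settled the codimension-one case, where $M$ must be a torus, and extra hypotheses on the regularity of the invariant foliations, on non-wandering behaviour, or on the homotopy type of $M$ each suffice. A full proof would need a genuinely new tool controlling the coarse geometry of $\widetilde M$ directly from hyperbolicity; accordingly the theorem of the present paper should be understood as establishing the special $TA$-map instance of this program over nil-manifolds, allowing also non-invertible $f$, rather than the conjecture in its full generality.
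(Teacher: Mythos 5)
This statement is a \emph{conjecture}, not a theorem: the paper offers no proof of it, explicitly notes that it ``has been open for many years,'' and positions its Main Theorem only as a partial answer in the special $TA$-map setting on nil-manifolds. So there is no paper argument to compare against. You correctly recognize this, and your write-up is best read not as a proof attempt but as an accurate survey of the standard reduction strategy together with an honest identification of where it breaks down.

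Your outline of the strategy is essentially the one the field has had in mind since Gromov: show $\pi_1(M)$ has polynomial growth, invoke Gromov's theorem to get virtual nilpotence, pass through asphericity and rigidity to identify $M$ with an infra-nil-manifold, then apply the Franks--Manning classification. You are also right that the missing step --- deriving polynomial (or even subexponential) growth of $\pi_1(M)$ from hyperbolicity alone --- is genuinely unavailable, and that the codimension-one case (Franks, Newhouse) is the main unconditional success. One caveat worth registering, since the paper itself flags it: Dekimpe \cite{[KD2]} showed that the Franks--Manning statement as usually quoted needs care on general infra-nil-manifolds, because an Anosov diffeomorphism there need not even be homotopic to an infra-nil-automorphism in the classical sense; the clean statement requires the corrected notion of affine infra-nil-automorphism (or a restriction to nil-manifolds, as in the present paper). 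So even the ``already known on infra-nil-manifolds'' step of your reduction is slightly more delicate than you present it. With that adjustment, your assessment of the status of the conjecture matches the paper's.
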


\begin{theorem}[Gromov \cite{[Gr]}]\label{thm}
Every expanding map on a closed manifold is topologically conjugate to an expanding affine infra-nil-endomorphism.
\end{theorem}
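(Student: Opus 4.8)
\emph{Proof sketch.} Write $M$ for the closed manifold, $f\colon M\to M$ for the expanding map, $n=\dim M$ and $\Gamma=\pi_1(M)$; the plan is to follow the Shub--Gromov strategy. First I would fix a smooth Riemannian metric adapted to $f$, so that $\|Df(v)\|\ge\lambda\|v\|$ for all tangent vectors $v$ and some $\lambda>1$, and lift everything to the universal cover: the lift $\widetilde f\colon\widetilde M\to\widetilde M$ is then an expanding covering map of a complete, simply connected manifold, hence a diffeomorphism (a connected covering of a simply connected space is trivial). Its inverse $\widetilde f^{-1}$ is a uniform contraction with a unique fixed point $p$, and a classical argument of Shub then identifies $\widetilde M$ diffeomorphically with $\mathbb R^n$, so that $M=\mathbb R^n/\Gamma$ with $\Gamma$ acting freely, properly discontinuously and cocompactly by deck transformations.

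Next I would extract the algebraic structure of $\Gamma$. Since $\widetilde f$ normalizes the deck action it induces an injective endomorphism $f_*\colon\Gamma\to\Gamma$ whose image has finite index $\deg f$. Fixing a finite symmetric generating set and using that $\Gamma$ with the word metric is quasi-isometric to $(\widetilde M,d)$, the expansion of $\widetilde f$ forces the word balls to grow at most polynomially in their radius: iterating $f_*^{-1}$ one covers a ball of radius $\lambda^{k}$ by a bounded number of $f_*^{-k}$-images of the unit ball, and counting these images against the (polynomial) volume growth of balls in $\widetilde M$ gives the bound. Thus $\Gamma$ has polynomial growth, and by Gromov's theorem on groups of polynomial growth it is virtually nilpotent; passing to a finite-index torsion-free nilpotent subgroup $\Gamma_0$ and invoking Malcev's theorem, $\Gamma_0$ is a cocompact lattice in a simply connected nilpotent Lie group $N\cong\mathbb R^n$, so a finite cover of $M$ is the nil-manifold $N/\Gamma_0$ and $M$ itself is an infra-nil-manifold $N/\Gamma$ with $\Gamma\subset N\rtimes\mathrm{Aut}(N)$.

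Finally I would build the conjugacy. Rigidity of lattices in nilpotent Lie groups lets one extend $f_*$ to an expanding affine endomorphism $A$ of $N$ that descends to $N/\Gamma$, and by construction the lifts $\widetilde A$ and $\widetilde f$ are $\Gamma$-equivariantly homotopic through maps of bounded displacement. Then $h(x)=\lim_{k\to\infty}\widetilde A^{-k}\widetilde f^{k}(x)$ defines a continuous $\Gamma$-equivariant map $h\colon\widetilde M\to N$ with $h\circ\widetilde f=\widetilde A\circ h$ (the limit converges because $\widetilde A^{-1}$ contracts while $d(\widetilde A,\widetilde f)$ stays bounded along orbits), so $h$ descends to a continuous semiconjugacy $\bar h\colon M\to N/\Gamma$; since an expanding map is expansive and has the shadowing property, the standard Shub argument upgrades $\bar h$ to a homeomorphism, which is the desired topological conjugacy onto the affine infra-nil-endomorphism.

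The main obstacle is the passage from the purely dynamical expansion hypothesis to the algebraic conclusion, namely the polynomial growth estimate on $\pi_1(M)$ together with the appeal to Gromov's deep polynomial-growth theorem; once $M$ is known to be an infra-nil-manifold, the construction of $\bar h$ and the verification that it is a homeomorphism are the by-now-classical Shub--Franks arguments and are comparatively routine.
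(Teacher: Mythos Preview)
The paper does not contain a proof of this statement: Theorem~\ref{thm} is quoted in the introduction as a known result of Gromov, with a citation to \cite{[Gr]}, purely as motivation and background for the paper's own Main Theorem (Theorem~\ref{Theorem Main}). There is therefore nothing in the paper to compare your argument against.

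For what it is worth, your sketch is a reasonable outline of the standard Shub--Gromov proof: lift to the universal cover and use the contraction of $\widetilde f^{-1}$ to identify $\widetilde M$ with $\mathbb{R}^n$; deduce polynomial growth of $\pi_1(M)$ from the expansion; invoke Gromov's polynomial-growth theorem and Malcev's theorem to recognise $M$ as an infra-nil-manifold; then build the semiconjugacy $h=\lim \widetilde A^{-k}\widetilde f^{k}$ and upgrade it to a homeomorphism via expansiveness and shadowing. The last step is indeed close in spirit to what the present paper does for its own Main Theorem (compare Lemma~\ref{Semiconjugacy}), but note that the paper never attempts to supply the hard algebraic input (polynomial growth $\Rightarrow$ virtually nilpotent), which is the genuine content of Gromov's theorem and which you correctly flag as the main obstacle.
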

The conjecture has been open for many years (see \cite{[ِDe]} page 48). An interesting problem is to consider the conjecture for endomorphisms of a closed manifold. Our main theorem is a partial answer to the conjecture.\\

In this paper we give a classification of special endomorphisms of nil-manifolds. Infact, Aoki and Hiraide \cite{[Ao-Hi]} in 1994 proposed two problems:
\begin{prob} \label{problem1}
Is every special Anosov differentiable map of a torus topologically conjugate to a hyperbolic toral endomorphism?
\end{prob}
\begin{prob} \label{problem2}
Is every special topological Anosov covering map of an arbitrary closed topological manifold topologically conjugate to a hyperbolic infra-nil-
endomorphism of an infra-nil-manifold ?
\end{prob}

Aoki and Hiraide answered problem \ref{problem1} partially as follows:
\begin{theorem}[\cite{[Ao-Hi]} Theorem 6.8.1] \label{Aoki Theorem 6.8.1}
Let $f: \mathbb{T}^n \rightarrow \mathbb{T}^n$ be a $TA$-covering map of an $n$-torus and denote by $A: \mathbb{T}^n \rightarrow \mathbb{T}^n$ the toral endomorphism homotopic to $f$. Then $A$ is hyperbolic. Furthermore the inverse limit system of $(\mathbb{T}^n,f)$ is topologically conjugate to the inverse limit system of $(\mathbb{T}^n,A)$.
\end{theorem}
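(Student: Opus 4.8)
\emph{Proof plan.} This is Aoki--Hiraide's theorem; the argument I would reconstruct follows their Chapter~6 and adapts the Franks--Manning method for Anosov diffeomorphisms to the non-invertible setting by passing to inverse limits. Throughout, identify $A$ with the induced linear automorphism of $\mathbb{R}^n$ preserving $\mathbb{Z}^n$ (with $\det A\neq 0$; in fact $|\det A|$ is the number of sheets of $f$), write $p\colon\mathbb{R}^n\to\mathbb{T}^n$ for the covering projection, and set $X_f=\varprojlim(\mathbb{T}^n,f)$ and $X_A=\varprojlim(\mathbb{T}^n,A)$ with their shift homeomorphisms $\sigma_f,\sigma_A$. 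By hypothesis, $f$ being a $TA$-covering map means precisely that $\sigma_f$ is expansive, with expansive constant $c>0$, and has the pseudo-orbit tracing property. The first step is to lift: since $f$ is a covering map homotopic to $A$, it lifts to a proper surjection $F\colon\mathbb{R}^n\to\mathbb{R}^n$ with $F(x+m)=F(x)+Am$ for $m\in\mathbb{Z}^n$, so $\phi:=F-A$ is $\mathbb{Z}^n$-periodic, descends to $\mathbb{T}^n$, and is bounded. I then form $\widetilde{X}_f=\varprojlim(\mathbb{R}^n,F)$ with shift $\widetilde{\sigma}_f$, whose $0$-th coordinate map $\widetilde{X}_f\to\mathbb{R}^n$ is proper and onto, which covers $X_f$ equivariantly, and for which $\widetilde{\sigma}_f$ is still $c$-expansive in the lifted metric.

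Next I would establish that $A$ is hyperbolic. Assume for contradiction that $A$ has an eigenvalue of modulus $1$, and write $\mathbb{R}^n=E^s\oplus E^c\oplus E^u$ for the corresponding $A$-invariant splitting, with $E^c\neq 0$ and $\|A^k v\|$ of at most polynomial growth in $|k|$ ($k\in\mathbb{Z}$) for $v\in E^c$. Choosing $0\neq v\in E^c$ so that the linear orbits $\{A^k x\}$ and $\{A^k(x+v)\}$ project to distinct $f$-orbits of $\mathbb{T}^n$, one uses the pseudo-orbit tracing property of $\sigma_f$ to shadow the associated $F$-pseudo-orbits by genuine $\sigma_f$-orbits, and then controls the central directions to see that the two resulting orbits are distinct yet remain within distance $c$ of one another for all times. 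This contradicts expansiveness of $\sigma_f$; hence $E^c=0$. From now on write $\mathbb{R}^n=E^s\oplus E^u$ with projections $\pi^s,\pi^u$ and $\max\{\|A^k|_{E^s}\|,\|A^{-k}|_{E^u}\|\}\le C\mu^k$ for $k\ge 0$, where $0<\mu<1$.

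With $A$ hyperbolic I would build the conjugacy by the Franks series, now split between the forward and the backward orbit carried by an inverse-limit point. A point of $\widetilde{X}_f$ is a sequence $\hat{x}=(x_i)_{i\ge 0}$ with $F(x_{i+1})=x_i$; it determines the forward orbit $(F^k x_0)_{k\ge 0}$ and the backward orbit $(x_k)_{k\ge 1}$. Define $\widetilde{H}\colon\widetilde{X}_f\to\mathbb{R}^n$ by
\[
\widetilde{H}(\hat{x})=x_0+\sum_{k\ge 0}A^{-(k+1)}\,\pi^u\bigl(\phi(F^k x_0)\bigr)-\sum_{k\ge 1}A^{\,k-1}\,\pi^s\bigl(\phi(x_k)\bigr).
\]
Both series converge geometrically since $\phi$ is bounded and $A$ is hyperbolic, $\widetilde{H}$ differs from the $0$-th coordinate map by a bounded amount, and a direct computation using $F=A+\phi$ gives $\widetilde{H}\circ\widetilde{\sigma}_f=A\circ\widetilde{H}$, where $A$ is identified with the shift on $\widetilde{X}_A=\varprojlim(\mathbb{R}^n,A)\cong\mathbb{R}^n$. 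Each summand is the unique bounded solution of its cohomological equation along $E^u$, resp.\ $E^s$, so $\widetilde{H}$ is independent of choices and equivariant for the deck transformations; hence it descends to a continuous $\bar{h}\colon X_f\to X_A$ with $\bar{h}\circ\sigma_f=\sigma_A\circ\bar{h}$.

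Finally I would show that $\bar{h}$ is a homeomorphism. Surjectivity follows from the bounded-distance property of $\widetilde{H}$ together with properness of the $0$-th coordinate map, by the standard degree argument of the Franks--Manning proof. For injectivity, if $\bar{h}(\hat{x})=\bar{h}(\hat{y})$ then, lifting, the $\widetilde{\sigma}_f$-orbits of $\hat{x}$ and $\hat{y}$ have $0$-th coordinates a uniformly bounded distance apart for all $n\in\mathbb{Z}$; the pseudo-orbit tracing property lets one replace them by bounded modifications lying within $c$ of each other for all times, so expansiveness forces $\hat{x}=\hat{y}$. Thus $\bar{h}$ conjugates $\sigma_f$ to $\sigma_A$, proving that the inverse limit systems of $(\mathbb{T}^n,f)$ and $(\mathbb{T}^n,A)$ are topologically conjugate. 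The main obstacle in this program is the hyperbolicity step: producing, from expansiveness and shadowing alone, genuinely distinct $\sigma_f$-orbits that stay within the expansive constant requires transporting the slowly separating central orbits of $A$ across the merely continuous shadowing correspondence without collapsing them, and the injectivity step above is a milder instance of the same difficulty.
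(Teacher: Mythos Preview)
The paper does not prove this statement; Theorem~\ref{Aoki Theorem 6.8.1} is quoted verbatim from Aoki--Hiraide's book as background, so there is no proof in the paper to compare your proposal against. What the paper \emph{does} contain is the nil-manifold analogue of the ingredients (Lemmas~\ref{Su 2 Lemma 1.3} and~\ref{Semiconjugacy}, both cited from Sumi), and these are consistent with the Franks--Manning strategy you outline: lift to the universal cover, prove the linearization $A$ is hyperbolic, solve the cohomological equation to get a semiconjugacy $\overline h$ at bounded distance from the identity, and then argue bijectivity.

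Your sketch is the right template, but two steps are thinner than you acknowledge. First, in the hyperbolicity argument you propose to shadow $A$-orbits along a putative center direction by genuine $\sigma_f$-orbits and contradict expansiveness; however, shadowing only places the true orbit within $\varepsilon$ of the pseudo-orbit, and the two shadowed orbits can in principle coincide. The actual arguments (Aoki--Hiraide for tori, Sumi for nil-manifolds) avoid this by working instead with the stable and unstable sets of the lifted map $\overline f$ and showing they must be transverse of complementary dimension, which forces $E^c=0$; your honest flag that this is ``the main obstacle'' is well placed. Second, your injectivity step asserts that two $\widetilde\sigma_f$-orbits whose $0$-th coordinates stay a bounded distance apart can be brought within the expansive constant $c$ by POTP. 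That is not how POTP is used; rather one needs the separate fact that for \emph{any} bound $L$, two bi-infinite $\overline f$-orbits staying within $L$ must coincide (the analogue of Hiraide's Lemma~3.2 quoted in the paper for $\overline A$), which is itself a nontrivial consequence of $c$-expansiveness plus compactness of the quotient. With those two gaps filled your outline matches the standard proof.
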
 \label{[Ao-Hi] Theorem 6.8.1}

\begin{theorem}[\cite{[Ao-Hi]} Theorem 6.8.2] \label{Aoki Theorem 6.8.2}
Let $f$ and $A$ be as Theorem \ref{Aoki Theorem 6.8.1}. Suppose $f$ is special, then the following statements hold:
\begin{enumerate}
\item[(1)] if $f$ is a $TA$-homeomorphism, then $A$ is a hyperbolic toral automorphism and $f$ is topologically conjugate to $A$,
\item[(2)] if $f$ is a topological expanding map, then $A$ is an expanding toral endomorphism and $f$ is topologically conjugate to $A$,
\item[(3)] if $f$ is a strongly special $TA$-map, then $A$ is a hyperbolic toral endomorphism and $f$ is topologically conjugate to $A$.
\end{enumerate}
\end{theorem}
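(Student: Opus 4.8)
The plan is to build on Theorem~\ref{Aoki Theorem 6.8.1}: that result already gives that $A$ is hyperbolic and that there is a topological conjugacy $\bar h$ from the inverse limit system $(\mathbb{T}^n_f,\bar f)$ onto $(\mathbb{T}^n_A,\bar A)$, where $\bar f$ and $\bar A$ denote the shift homeomorphisms. Two things then remain in each of (1)--(3): to identify the algebraic type of $A$, and --- this is where the hypothesis \emph{special} does the work --- to push $\bar h$ down to an honest self-map of $\mathbb{T}^n$ and show it is a homeomorphism.

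The type of $A$ is read off from topological invariants. If $f$ is a $TA$-homeomorphism then $\deg f=\pm1$, hence $\det A=\pm1$, so the hyperbolic endomorphism $A$ is in fact a hyperbolic toral automorphism; moreover when $f$ is a homeomorphism a point of $\mathbb{T}^n$ has a unique prehistory, so $\mathbb{T}^n_f$ is canonically $\mathbb{T}^n$, and likewise $\mathbb{T}^n_A=\mathbb{T}^n$ because $A\in GL(n,\mathbb{Z})$; thus $\bar h$ already \emph{is} the required conjugacy and (1) is finished. If $f$ is topologically expanding then its local stable sets are singletons, and transporting this through $\bar h$ forces the stable subspace of $A$ to be trivial; so $A$ is an expanding toral endomorphism, and the conjugacy in (2) is the toral instance of the classification of expanding maps (Theorem~\ref{thm}), or equivalently one runs the argument below with the stable direction absent. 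In case (3) no further restriction on $A$ is available, so $A$ is a general hyperbolic toral endomorphism and the substance of the statement is the construction of the conjugacy.

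For that construction I would first produce a semiconjugacy and then prove it injective. Pass to the universal cover: write the linear lift of $A$ again as $A\colon\mathbb{R}^n\to\mathbb{R}^n$, and choose a lift $\tilde f=A+p$ of $f$ with $p$ bounded and $\mathbb{Z}^n$-periodic; since $\mathbb{R}^n$ is simply connected $\tilde f$ is a homeomorphism of $\mathbb{R}^n$ and $\tilde f(x+m)=\tilde f(x)+Am$ for $m\in\mathbb{Z}^n$. Seeking $\tilde h=\mathrm{id}+u$ with $\tilde h\circ\tilde f=A\circ\tilde h$ reduces to the cohomological equation $u\circ\tilde f-A\,u=-p$; splitting $u$ and $p$ along the hyperbolic decomposition $\mathbb{R}^n=E^s_A\oplus E^u_A$ it decouples, the unstable part being given by the convergent series $u^u=\sum_{k\ge0}A_u^{-(k+1)}(p^u\circ\tilde f^{\,k})$ and the stable part by $u^s=-\sum_{k\ge1}A_s^{\,k-1}(p^s\circ\tilde f^{-k})$. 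The occurrence of $\tilde f^{-1}$ is possible only because we are on the universal cover; on $\mathbb{T}^n$ itself the stable half of this equation need not be solvable, which is precisely why Theorem~\ref{Aoki Theorem 6.8.1} had to be phrased on the inverse limit. Using that $f$ is special one then checks, as in the Franks--Manning construction, that $\tilde h$ descends to a continuous surjection $h\colon\mathbb{T}^n\to\mathbb{T}^n$ homotopic to the identity with $h\circ f=A\circ h$, that $h$ is the map induced on $\mathbb{T}^n$ by $\bar h$, that $h$ carries stable sets of $f$ into stable sets of $A$, and --- because specialness makes the unstable sets well defined on $\mathbb{T}^n$ --- that $h$ carries $W^u(x,f)$ into $W^u(h(x),A)$.

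The main obstacle is the injectivity of $h$. Along a stable set it is straightforward: if $h(y)=h(z)$ with $y,z$ on a common stable set, then, lifting, the two full $\tilde f$-orbits of $\tilde y,\tilde z$ in $\mathbb{R}^n$ stay a bounded distance apart, and the expansiveness built into the definition of a $TA$-map forces $\tilde y=\tilde z$, hence $y=z$. Along an unstable set one uses that $f$ restricts to an expanding homeomorphism of each local unstable set, so the analogous backward-contraction estimate --- now legitimate because, $f$ being special (and strongly special when $A$ is a general hyperbolic endomorphism), the unstable sets genuinely live on $\mathbb{T}^n$ and $h$ maps them homeomorphically onto those of $A$ --- gives injectivity there as well; combining the two directions via the local product structure of a $TA$-map yields global injectivity, after which $h$ is a homeomorphism by invariance of domain together with compactness of $\mathbb{T}^n$. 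The delicate point --- and the reason the naive descent of $\bar h$ fails for a general $TA$-covering map --- is that any two prehistories of one point of $\mathbb{T}^n$ lie in a single stable set of $\bar f$, whereas $\bar h$ only carries that stable set into a stable set of $\bar A$, whose projection to $\mathbb{T}^n$ is an entire stable manifold rather than a point; the hypotheses \emph{special} and \emph{strongly special} are exactly what rigidify the unstable direction enough to collapse this ambiguity, and turning that heuristic into a proof --- tracking how specialness passes through $\bar h$ and through the local product structure so that $h$ is genuinely one-to-one --- is where I expect the bulk of the technical effort to lie.
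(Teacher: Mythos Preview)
The paper does not prove this theorem; it is quoted verbatim from \cite{[Ao-Hi]} as background. What the paper \emph{does} prove is its nil-manifold analogue (Theorem~\ref{Theorem Main}), whose torus specialization covers (1)--(3), in fact with the weaker hypothesis ``special'' in place of ``strongly special'' following Sumi's improvement (Theorem~\ref{Su Theorem}). So there is no proof here to compare against directly, but your sketch can be measured against the paper's argument for the Main Theorem.

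The divergence is at the central step, and there your proposal has a real gap. You build $\tilde h=\mathrm{id}+u$ by the explicit series and then say that, using specialness, ``$\tilde h$ descends to a continuous surjection $h\colon\mathbb{T}^n\to\mathbb{T}^n$''. But the stable half $u^s=-\sum_{k\ge1}A_s^{k-1}(p^s\circ\tilde f^{-k})$ is not $\mathbb{Z}^n$-periodic when $A\notin GL(n,\mathbb{Z})$: from $\tilde f(x+m)=\tilde f(x)+Am$ one gets $\tilde f^{-k}(x+A^km)=\tilde f^{-k}(x)+m$, so $\tilde f^{-k}$ is equivariant only for the sublattice $A^k\mathbb{Z}^n$, and the sum inherits no periodicity. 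Descent is therefore the whole problem, not a check ``as in Franks--Manning''; you acknowledge at the end that this is where the work lies, but the proposal offers no mechanism. Phrasing the difficulty as ``injectivity of $h$'' is also off target: before one can ask whether $h$ is injective one must know it is well defined, and that is exactly the equivariance $\tilde h(x+m)=\tilde h(x)+m$.

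The paper's route is quite different. It starts from the abstract semiconjugacy $\overline h$ of Lemma~\ref{Semiconjugacy}, for which one knows only $\overline h(x\gamma)\in\overline L^s(\overline h(x)\gamma)$ (Lemma~\ref{lm Su 2 Lemma 2.4}(4)). The idea is to introduce an auxiliary map $\overline h'(x)=\overline h(x\gamma^{-1})\gamma$ on the dense set $\bigcup_{\gamma\in\Gamma}\overline W^u(\gamma;\mathbf e)$; specialness is used precisely to make $\overline h'$ well defined (Proposition~\ref{lm su1 9}). One then proves $\overline h'$ is $D$-uniformly continuous (Proposition~\ref{su1 lm12}), extends it to $N$, and identifies the extension with $\overline h$ by the uniqueness clause of Lemma~\ref{Semiconjugacy}. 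This yields $\overline h(\gamma)=\gamma$ for all $\gamma\in\Gamma$, and a stable/unstable intersection argument (Proposition~\ref{final lemma}) upgrades this to full equivariance $\overline h(x\gamma)=\overline h(x)\gamma$. Descent is then automatic, and since $\overline h$ is already a homeomorphism (last clause of Lemma~\ref{Semiconjugacy}) so is the induced $h$; no separate injectivity argument is needed.
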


In \cite{[Su 1]}, Sumi has altered the condition " strongly special "  (part (3) of Theorem \ref{Aoki Theorem 6.8.2}) to just " special "  as follows:

\begin{theorem}[\cite{[Su 1]}] \label{Su Theorem}
Let $f$ and $A$ be as Theorem \ref{Aoki Theorem 6.8.1}.
If $f$ is a special $TA$-map, then $A$ is a hyperbolic toral endomorphism and $f$ is topologically conjugate to $A$.
\end{theorem}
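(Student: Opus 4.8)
\noindent\emph{Sketch of a possible proof.}
That $A$ is hyperbolic is immediate from Theorem~\ref{Aoki Theorem 6.8.1}, which moreover provides a conjugacy $\bar h$ between the inverse limit systems of $(\mathbb{T}^n,f)$ and $(\mathbb{T}^n,A)$; so the real task is to promote $\bar h$ to a conjugacy on $\mathbb{T}^n$ itself, and it is here that the hypothesis ``special'' (rather than merely ``$TA$'') must be used. Write $p^f_0:\mathbb{T}^n_f\to\mathbb{T}^n$ and $p^A_0:\mathbb{T}^n_A\to\mathbb{T}^n$ for the zeroth-coordinate projections; these are closed surjections, hence quotient maps, and they intertwine the shift with $f$, resp.\ $A$. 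A map $h$ with $h\circ p^f_0=p^A_0\circ\bar h$ exists exactly when $\bar h$ sends each fibre of $p^f_0$ into a fibre of $p^A_0$, and any such $h$ is then automatically a homeomorphism with $h\circ f=A\circ h$ (the inverse is produced in the same way from $\bar h^{-1}$). Since the fibre of $p^f_0$ over a point $x$ is the set of all backward orbits of $x$, and any two of them are forward asymptotic under the shift, this fibre lies inside a single stable set of the shift; so what must be proved is that $\bar h$ respects the decomposition of each shift stable set into $p^f_0$-fibres.

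First I would lift to the universal cover $\mathbb{R}^n$ of $\mathbb{T}^n$. Since $f$ is a covering map its degree $|\det A|\ge 1$ is nonzero, so a lift $\tilde f=A+g$ (with $g$ bounded and $\mathbb{Z}^n$-periodic) satisfies $|\tilde f(x)|\to\infty$ as $|x|\to\infty$ (because $|\tilde f(x)|\ge\|A^{-1}\|^{-1}|x|-\|g\|_\infty$), hence is proper, and being also a local homeomorphism it is a homeomorphism of $\mathbb{R}^n$ (a proper local homeomorphism of $\mathbb{R}^n$ is a covering map, hence a homeomorphism since $\mathbb{R}^n$ is simply connected). As $A$ is hyperbolic and invertible on $\mathbb{R}^n$, a Franks--Manning fixed-point argument — solving the $E^u$-component of the equation $H\tilde f=AH$ by a contraction using $\|(A|_{E^u})^{-1}\|<1$, and the $E^s$-component by a contraction using $\|A|_{E^s}\|<1$ together with the invertibility of $\tilde f$ — produces a homeomorphism $H:\mathbb{R}^n\to\mathbb{R}^n$ with $H\tilde f=AH$ and $H-\mathrm{id}$ bounded; it is a lift of $\bar h$, and it descends to the desired $h$ precisely when it is $\mathbb{Z}^n$-equivariant, i.e.\ when $\phi_m(x):=H(x+m)-H(x)-m$ vanishes for every $m\in\mathbb{Z}^n$. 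The $\phi_m$ are bounded uniformly in $m$, and $H\tilde f=AH$ forces $\phi_{Am}=A\circ\phi_m\circ\tilde f^{-1}$; iterating this and using $\|(A|_{E^u})^{-k}\|\to 0$ shows at once that the $E^u$-component $\phi^u_m$ vanishes identically. So $H$ is automatically equivariant in the unstable direction, and everything reduces to the vanishing of the $E^s$-component $\phi^s_m$.

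The vanishing of $\phi^s_m$ is where ``special'' enters and, I expect, where the real work lies: for an arbitrary $TA$-covering map $\phi^s_m$ need not vanish — precisely why Theorem~\ref{Aoki Theorem 6.8.1} only yields an inverse-limit conjugacy — whereas the stronger hypothesis of Theorem~\ref{Aoki Theorem 6.8.2}(3) forces it. I would use ``special'' in the form: the unstable sets are well defined on $\mathbb{T}^n$; together with the stable sets, which are always well defined on $\mathbb{T}^n$ (being cut out by forward orbits only), both families lift to $\mathbb{Z}^n$-invariant foliations of $\mathbb{R}^n$, and $H$ carries them onto the affine $E^s$- and $E^u$-foliations of $A$. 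Hence each deck conjugate $\Phi_m:=H\circ T_m\circ H^{-1}$ ($T_m=$ translation by $m$) preserves both affine foliations, so $\Phi_m$ is a product homeomorphism $(s,u)\mapsto(\delta_m(s),\gamma_m(u))$ of $E^s\times E^u$, at bounded distance from $T_m$ and with $A\Phi_m=\Phi_{Am}A$; the $E^u$-expansion of $A$ then forces $\gamma_m(u)=u+\pi^u(m)$ exactly, where $\pi^s$ and $\pi^u$ are the linear projections of $\mathbb{R}^n$ onto $E^s$ and $E^u$, while the same estimate yields $\delta_m(s)=s+\pi^s(m)$ only for $m$ in $\bigcap_{k\ge 0}A^k\mathbb{Z}^n$. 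Proving $\delta_m=T_{\pi^s(m)}$ for \emph{every} $m\in\mathbb{Z}^n$ is the main obstacle; I would attack it either (i) by first verifying equivariance over the periodic orbits of $f$ — where the relevant stable sets are finite and their combinatorics is pinned down by $\bar h$ — and then propagating by density of periodic points and continuity of $H$, or (ii) by a rigidity argument for the $\mathbb{Z}^n$-action $\{\delta_m\}$ induced on the unstable leaf space, which exists precisely because $f$ is special, is at bounded distance from the translation action $m\mapsto T_{\pi^s(m)}$, and is intertwined with the topological contraction induced on that leaf space by $\tilde f$, the integrality of $A$ being what should force such a bounded perturbation of a dense translation action to be trivial.

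Once $H$ is $\mathbb{Z}^n$-equivariant it descends to a homeomorphism $h:\mathbb{T}^n\to\mathbb{T}^n$ with $h\circ f=A\circ h$, which together with the hyperbolicity of $A$ is the asserted conclusion.
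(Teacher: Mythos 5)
Your sketch correctly identifies the skeleton of the argument: lift to $\mathbb{R}^n$, build the Franks--Manning semiconjugacy $H$ with $H-\mathrm{id}$ bounded, observe that equivariance in the $E^u$-direction ($\phi^u_m\equiv 0$) is automatic, and reduce everything to $\phi^s_m\equiv 0$. Your observation that without ``special'' one only obtains $\phi_m=0$ for $m\in\bigcap_{k\ge 0}A^k\mathbb{Z}^n$ matches precisely part (3) of Lemma~\ref{lm Su 2 Lemma 2.4}, and your automatic $E^u$-vanishing is part (4). So your diagnosis of where ``special'' must enter is correct.

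However, the crux --- deducing $\phi^s_m\equiv 0$ from ``special'' --- is exactly the point you leave open. You propose two attacks (propagation from periodic orbits, or rigidity of the induced $\mathbb{Z}^n$-action on the unstable leaf space), but neither is carried out, and both would require the same hard quantitative input you have not produced. This is a genuine gap, not a detail. The paper treats this theorem as a citation to Sumi~\cite{[Su 1]}, but the Section~3 proof of the Main Theorem (the nil-manifold generalization) is the relevant comparison, and it does not work with the cocycle $\phi_m$ at all. Instead it proceeds by: (a) proving equivariance only over the subgroup $\Gamma_{\overline f}=\overline W^u(e;\mathbf e)\cap\Gamma$ of deck transformations lying in the unstable leaf of the fixed point, via the intersection-of-leaves trick $\overline h(v\gamma)\in\overline L^u(\overline h(v)\gamma)\cap\overline L^s(\overline h(v)\gamma)=\{\overline h(v)\gamma\}$ (Proposition~\ref{lm su1 9}); (b) defining a ``restarted'' map $\overline h'(x)=\overline h(x\gamma^{-1})\gamma$ on $\bigcup_{\gamma\in\Gamma}\overline W^u(\gamma;\mathbf e)$; (c) the decisive step, Proposition~\ref{su1 lm12}, where ``special'' is actually used: showing $\overline h'$ is $D$-uniformly continuous, by a contradiction that would otherwise produce a deck element $\gamma$ whose iterates $\gamma^n$ both leave and remain inside a fixed neighborhood of $\overline L^u(e)$; (d) density of $\bigcup_\gamma\overline W^u(\gamma;\mathbf e)$ from the spectral/elementary-set decomposition together with $\Omega(f)=N/\Gamma$; and (e) uniqueness of the semiconjugacy forcing $\overline h'=\overline h$, hence $\overline h(\gamma)=\gamma$ for all $\gamma$, after which full $\Gamma$-equivariance follows again from the leaf-intersection argument on a dense set (Proposition~\ref{final lemma}). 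Some version of step (c) cannot be bypassed; it is the only place ``special'' produces a quantitative contradiction rather than a qualitative reformulation, and it is exactly what your rigidity heuristic (ii) would have to become to close the argument.
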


In \cite{[Su 2]}, Sumi generalized (\textit{\underline{incorrectly}}) Theorem \ref{Aoki Theorem 6.8.1} and parts (1) and (2) of Theorem \ref{Aoki Theorem 6.8.2} for infra-nil-manifolds as follows:

\begin{theorem}[\cite{[Su 2]} Theorem 1] \label{Su2 theorem1}
Let $f:N/\Gamma \rightarrow N/\Gamma$ be a covering map of an infra-nil-manifold and denote as $A:N/\Gamma \rightarrow N/\Gamma$ the infra-nil-endomorphism homotopic to $f$. If $f$ is a TA-map, then $A$ is hyperbolic and the inverse limit system of $(N/\Gamma, f)$ is topologically conjugate to the inverse limit system of $(N/\Gamma, A)$ .
\end{theorem}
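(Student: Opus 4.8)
The plan is to adapt to a simply connected nilpotent Lie group $N$ (with lattice $\Gamma\subseteq N\rtimes\mathrm{Aut}(N)$) the Franks--Manning--Aoki--Hiraide scheme used for the torus in Theorem~\ref{Aoki Theorem 6.8.1}. Since $f$ and $A$ are covering maps of the closed manifold $N/\Gamma$, they lift to homeomorphisms $\tilde f,\tilde A\colon N\to N$, with $\tilde A$ affine, $\tilde A(x)=a\cdot\phi(x)$ for some $a\in N$ and $\phi\in\mathrm{Aut}(N)$; because $f$ and $A$ are homotopic we may assume both are equivariant over one and the same injective endomorphism $f_{*}=A_{*}$ of $\Gamma$, that is $\tilde f\circ\gamma=f_{*}(\gamma)\circ\tilde f$ and $\tilde A\circ\gamma=f_{*}(\gamma)\circ\tilde A$ for all $\gamma\in\Gamma$. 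Fixing a left-invariant metric $d$ on $N$, the function $x\mapsto d(\tilde f(x),\tilde A(x))$ is invariant under the cocompact left action of $\Gamma\cap N$, hence descends to the compact nilmanifold $N/(\Gamma\cap N)$ and is bounded, say by $C_{0}<\infty$. The target is a $\Gamma$-equivariant continuous surjection $\tilde h\colon N\to N$ at bounded distance from the identity with $\tilde h\circ\tilde f=\tilde A\circ\tilde h$, whose descent $h\colon N/\Gamma\to N/\Gamma$ then induces the desired conjugacy of inverse limit systems.

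First one shows $A$ is hyperbolic. Since $f$ is a $TA$-map, its natural extension $\bar f$ on $X_{f}:=\varprojlim(N/\Gamma,f)$ is an expansive homeomorphism with the pseudo-orbit tracing property. If the linearization $d\phi$ on the Lie algebra $\mathfrak n$ had an eigenvalue of modulus one, the corresponding $d\phi$-invariant ``central'' subalgebra would integrate to a $\phi$-invariant subgroup $C\subseteq N$ along which $\tilde A$ displaces points by bounded amounts; as in the toral case one could then use the pseudo-orbit tracing property to shadow such bounded perturbations by genuine $\bar f$-orbits of distinct points, contradicting expansiveness of $\bar f$. (No eigenvalue can be $0$, since $\phi$ is an automorphism.) Hence $d\phi$ is hyperbolic, which gives the $d\phi$-invariant splitting $\mathfrak n=\mathfrak n^{s}\oplus\mathfrak n^{u}$ into contracting and expanding generalized eigenspaces together with the associated $\tilde A$-invariant stable and unstable foliations of $N$.

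Next one constructs $\tilde h$. In Malcev (exponential) coordinates $N\cong\mathbb R^{m}$, and $\tilde A$ and $\tilde f$ become polynomial diffeomorphisms with $\tilde f$ a bounded perturbation of $\tilde A$. Writing $\tilde h=\mathrm{id}+u$ with $u$ bounded, the conjugacy equation turns into a fixed-point equation for $u$ in the Banach space of bounded continuous maps $N\to N$ with the supremum norm; its linear part decomposes into a $d\phi$-contraction on the stable component (iterated forward) and a $d\phi^{-1}$-contraction on the unstable component (iterated backward), and since precomposition with a homeomorphism is a sup-norm isometry this linear part is a uniform contraction, while the genuinely polynomial terms are dealt with by working up the lower central series, the multiplicative spectral gap of $d\phi$ dominating the polynomial growth. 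The unique bounded fixed point $\tilde h$ is $\Gamma$-equivariant (apply uniqueness to $\gamma^{-1}\circ\tilde h\circ\gamma$), continuous, and surjective (its image is closed, $\tilde A$-invariant, and within bounded distance of every point of $N$); descending, $h\circ f=A\circ h$ with $h$ homotopic to the identity.

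Finally one upgrades $h$ to a conjugacy $\bar h\colon X_{f}\to X_{A}$, $(x_{i})_{i}\mapsto(h(x_{i}))_{i}$, with $\bar h\circ\bar f=\bar A\circ\bar h$. Injectivity of $\bar h$ follows from expansiveness of $\bar f$: two points with the same image lift to $\tilde f$-orbits remaining within bounded distance in $N$ (by the bound on $\tilde h$ and hyperbolicity of $\tilde A$), hence coincide; surjectivity follows from surjectivity of $h$ together with invertibility of $\bar A$, the closed $\bar A$-invariant set $\bar h(X_{f})$ having full projection to $N/\Gamma$ and therefore equalling $X_{A}$. Thus $\bar h$ is a continuous bijection of compact metric spaces, hence a homeomorphism conjugating $\bar f$ to $\bar A$. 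The main obstacle is twofold. First, making the contraction estimates genuinely work in \emph{non-abelian} Malcev coordinates: there $\tilde f$ and $\tilde A$ are polynomial rather than linear and the group multiplication couples the stable and unstable directions, so the polynomial error terms must be controlled uniformly on all of $N$ and shown to be absorbed by the spectral gap. Second, and more delicate, is the passage through the finite holonomy group $F=\Gamma/(\Gamma\cap N)$ of the infra-nil-manifold: the infra-nil-endomorphism $A$ need not lift to a nil-endomorphism of the nilmanifold cover, so descending $\tilde h$ (and $\bar h$) all the way to $N/\Gamma$ rather than only to $N/(\Gamma\cap N)$ requires Bieberbach-type rigidity for $\Gamma$ — exactly the point at which the parallel Franks--Manning argument needed Dekimpe's correction, and the step whose soundness in this generality most warrants scrutiny.
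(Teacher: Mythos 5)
The paper does not supply a proof of Theorem~\ref{Su2 theorem1}: it is quoted from Sumi \cite{[Su 2]}, and the paper explicitly flags Sumi's generalization from tori to infra-nil-manifolds as \emph{incorrect}. By Dekimpe \cite{[KD2]} the defect is not in any of the analytic steps you outline, but in the hypothesis: a $TA$-covering map of a general infra-nil-manifold need not be homotopic to \emph{any} infra-nil-endomorphism, so the phrase ``denote as $A$ the infra-nil-endomorphism homotopic to $f$'' can be vacuous, and Dekimpe exhibits an expanding map and an Anosov diffeomorphism of infra-nil-manifolds that fail to be conjugate (hence homotopic) to any such $A$. The theorem is recovered either by restricting to nil-manifolds, where the holonomy group $F$ is trivial and $f_*$ is always realized by a Lie automorphism of $N$, or by assuming outright that the homotopic $A$ exists; the present paper adopts the former restriction throughout.

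Granting the existence of $A$, your outline is the standard Franks--Manning--Sumi scheme and is essentially the right one: lift, prove $\overline A$ hyperbolic by shadowing a putative neutral direction against $c$-expansiveness, solve a bounded-conjugacy fixed-point problem for a semiconjugacy $\overline h$ at bounded distance from $\mathrm{id}_N$, establish equivariance and descend, then conjugate the inverse limits. Your closing worry about holonomy is, however, slightly misplaced. Once $A$ \emph{is} an infra-nil-endomorphism, its lift $\overline A$ is by definition a Lie automorphism of $N$ whose associated conjugation preserves $\Gamma$ and hence the characteristic subgroup $\Gamma\cap N$, so $A$ automatically lifts to a nil-endomorphism of the nilmanifold cover $N/(\Gamma\cap N)$, and descending $\overline h$ to $N/\Gamma$ then follows from the $\Gamma$-equivariance supplied by uniqueness of the bounded solution. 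The place where the holonomy group $F$ really obstructs the argument is one step earlier and purely algebraic: whether the injective endomorphism $f_*$ of $\Gamma$ can be realized at all by conjugation by some element of $\mathrm{Aff}(N)$. That is the Bieberbach-type rigidity assertion that fails in general for infra-nil-manifolds, which is exactly what Dekimpe's counterexamples expose and why the theorem as Sumi stated it is not correct.
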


\begin{theorem}[\cite{[Su 2]} Theorem 2] \label{Su2 theorem2}
Let $f$ and $A$ be as in Theorem \ref{Su2 theorem1}. Then the following statements hold:
\begin{enumerate}
\item[(1)]if $f$ is a TA-homeomorphism, then $A$ is a hyperbolic infra-nil-automorphism and $f$ is topologically conjugate to $A$,
\item[(2)]if $f$ is a topological expanding map, then $A$ is an expanding infra-nil-endomorphism and $f$ is topologically conjugate to $A$.
\end{enumerate}
\end{theorem}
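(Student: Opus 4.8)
The plan is to build on Theorem \ref{Su2 theorem1}: for the covering map $f$ and the homotopic infra-nil-endomorphism $A$ we already know that $A$ is hyperbolic and that there is a topological conjugacy $\bar h$ between the inverse limit system $(\Sigma_f,\hat f)$ of $(N/\Gamma,f)$ and the inverse limit system $(\Sigma_A,\hat A)$ of $(N/\Gamma,A)$, where $\Sigma_g$ denotes the space of backward $g$-orbits and $\hat g$ the shift homeomorphism. What remains is, in each case, to identify the algebraic type of $A$ and then to descend $\bar h$ to an honest conjugacy on $N/\Gamma$. For (1) the latter is immediate once we know $A$ is a homeomorphism: since $N/\Gamma$ is a $K(\Gamma,1)$ and $f$ is a homeomorphism, $f_\ast=A_\ast$ is an automorphism of $\Gamma$; restricting to the maximal torsion-free nilpotent subgroup $\Gamma\cap N$ and passing to Mal'cev coordinates, the linear part $\Phi$ of the affine lift $\tilde A$ of $A$ becomes an integer matrix with $|\det\Phi|=|\deg f|=1$, hence an automorphism of $N$, so $A$ is an infra-nil-automorphism and, with the hyperbolicity from Theorem \ref{Su2 theorem1}, a hyperbolic infra-nil-automorphism. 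Both $f$ and $A$ being homeomorphisms, the zeroth-coordinate projections $\pi_0\colon\Sigma_f\to N/\Gamma$ and $\pi_0'\colon\Sigma_A\to N/\Gamma$ are themselves conjugacies onto $(N/\Gamma,f)$ and $(N/\Gamma,A)$, and $h:=\pi_0'\circ\bar h\circ\pi_0^{-1}$ is the required topological conjugacy, $h\circ f=A\circ h$.

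For (2) the first task is to upgrade ``$A$ is hyperbolic'' to ``$A$ is expanding''. Write $\mathfrak n=\mathfrak n^-\oplus\mathfrak n^+$ for the stable/unstable splitting of the differential of $A$; then $A$ is expanding if and only if $\mathfrak n^-=0$. Using the standard metric on a solenoid one checks that $\hat y$ lies in the stable set of $\hat x$ for $\hat f$ exactly when $x_0=y_0$, because $f$ is expanding and hence two forward orbits in $N/\Gamma$ that converge must coincide. Thus this stable set is $\pi_0^{-1}(x_0)$, which --- $f$ being expanding, hence of degree at least $2$ --- is a Cantor set, in particular totally disconnected. The same computation for $\hat A$ identifies the stable set of $\hat x$ with $(\pi_0')^{-1}(W^s(x_0;A))$, where $W^s(x_0;A)$ is the stable set of $x_0$ under $A$, a submanifold of dimension $\dim\mathfrak n^-$; since $A$ is a covering map, over a small disc in $W^s_{\mathrm{loc}}(x_0;A)$ this preimage is homeomorphic to the product of that disc with a Cantor set, so it contains an embedded disc of dimension $\dim\mathfrak n^-$. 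But $\bar h$, being a homeomorphism intertwining the shifts, restricts to a homeomorphism between corresponding stable sets, and topological dimension is a homeomorphism invariant; hence $\dim\mathfrak n^-=0$ and $A$ is an expanding infra-nil-endomorphism.

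It then remains to conjugate $f$ to $A$ in case (2). As $A$ is now expanding, the linear part $\Phi$ is an automorphism of $N$ all of whose eigenvalues have modulus greater than $1$, so $\tilde A$ is a homeomorphism of $N$ and $\tilde A^{-1}$ is a uniform contraction in a left-invariant metric; the expanding map $f$ likewise lifts to a homeomorphism $\tilde f$ of $N$ (a classical observation of Shub), and since $\tilde f$ and $\tilde A$ induce the same (injective) endomorphism of $\Gamma$, the displacement $x\mapsto d(\tilde f x,\tilde A x)$ is $\Gamma$-periodic and therefore bounded. Then $\tilde h:=\lim_{k\to\infty}\tilde A^{-k}\circ\tilde f^{k}$ converges uniformly by a telescoping estimate, satisfies $\tilde h\circ\tilde f=\tilde A\circ\tilde h$, is $\Gamma$-equivariant, and is a homeomorphism (run the same construction with the roles of $\tilde f$ and $\tilde A$ exchanged, using that $\tilde f$ is expanding); passing to the quotient it yields the topological conjugacy $h\colon N/\Gamma\to N/\Gamma$, $h\circ f=A\circ h$. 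Alternatively, once $A$ is known to be expanding one may combine Gromov's Theorem \ref{thm} with the fact that, in a prescribed homotopy class on an infra-nil-manifold, an expanding infra-nil-endomorphism is unique up to topological conjugacy.

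I expect the main obstacle to be the step in (2) converting the topological hypothesis ``$f$ is expanding'' into the algebraic conclusion $\mathfrak n^-=0$ for $A$: one must check with care that the standard solenoid metric genuinely realises the $\pi_0$-fibres as the stable sets of $\hat f$, and that $(\pi_0')^{-1}(W^s_{\mathrm{loc}}(x_0;A))$ really contains an embedded disc of the full dimension $\dim\mathfrak n^-$, which rests on the covering property of $A$ and a local trivialisation of $\Sigma_A$ over $N/\Gamma$. A second delicate point, already used in (1), is the structure theory of infra-nil-endomorphisms --- the very setting of Dekimpe's correction to Franks--Manning --- namely that the linear part of $\tilde A$ is, in Mal'cev coordinates of $\Gamma\cap N$, an integer matrix of determinant $\pm\deg f$; this is what forces $A$ to be an automorphism when $f$ is one and makes hyperbolicity an eigenvalue condition. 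Finally, showing the descended map $h$ is injective in (2) again relies on the expansiveness built into the $TA$ hypothesis of Theorem \ref{Su2 theorem1}.
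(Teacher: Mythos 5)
The paper does not prove this theorem at all: it is quoted verbatim from Sumi \cite{[Su 2]} as background, with the explicit caveat (underlined in the text) that Sumi's proof is \emph{incorrect} for general infra-nil-manifolds and, following Dekimpe \cite{[KD2]}, is only trusted after restricting to nil-manifolds. The paper's own contribution begins at Theorem \ref{Theorem Main}, which treats the remaining case of special $TA$-maps that are neither homeomorphisms nor expanding. So there is no ``paper's proof'' to compare your attempt against --- you have reconstructed a proof of a cited ingredient, not of a result the authors establish themselves.

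Taken on its own, your sketch is broadly plausible, but two points deserve correction. First, in part (2) you assert that the stable set of $\hat x$ in the inverse limit of an expanding $f$ is exactly $\pi_0^{-1}(x_0)$. This is not quite right: unwinding the solenoid metric shows that the stable set is $\pi_0^{-1}\bigl(W^s(x_0;f)\bigr)$, and for an expanding map $W^s(x_0;f)$ is typically a countable dense set (all eventual preimages merging with the orbit of $x_0$), not the single point $\{x_0\}$. What you actually need --- and what survives --- is only that this set is totally disconnected, which still holds since it is a countable union of Cantor fibres; so the dimension contradiction with $(\pi_0')^{-1}\bigl(W^s(x_0;A)\bigr) \supset$ a disc of dimension $\dim\mathfrak n^-$ goes through. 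Second, your ``delicate point'' about Mal'cev coordinates and the integrality/determinant of the linear part is precisely the territory where Sumi's original argument went wrong in the infra-nil case; if you intend this proof to cover infra-nil-manifolds you must either restrict to nil-manifolds (as the paper does) or verify that the hypothesis ``$A$ is the infra-nil-endomorphism homotopic to $f$'' is already being assumed rather than derived, since Dekimpe showed such an $A$ need not exist.
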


Dekimpe \cite{[KD2]}, expressed that there might exist (interesting) diffeomorphisms and self-covering maps of an infra-nil-manifold which are not even homotopic to an infra-nil-endomorphism. Dekimpe \cite{[KD2]} in \S 4, gave an expanding map not topologically conjugate to an infra-nil-endomorphism. And in \S 5, he gave an Anosov diffeomorphism not topologically conjugate to an infra-nil-automorphism. According to \cite{[KD2]}, Theorem \ref{Su2 theorem1} and Theorem \ref{Su2 theorem2} are true for nil-manifolds. Of course, if in Sumi's works, the map $f$ has a desired homotopic infra-nil-endomorphism, then the theorems hold.\\

Since, nil-manifolds are included in infra-nil-manifolds, we consider \cite{[Su 2]} for nil-manifolds.\\
 
In the paper, by using Theorem \ref{Su Theorem}, we partially answer problem \ref{problem2} of Aoki and Hiraide as follows:

\begin{theorem}[Main Theorem] \label{Theorem Main}
Let $f:N/\Gamma \rightarrow N/\Gamma$ be a covering map of a nil-manifold and denote as $A:N/\Gamma \rightarrow N/\Gamma$ the nil-endomorphism homotopic to $f$ (according to \cite{[KD2]}, such a unique homotopy exists for nil-manifolds). If $f$ is a special $TA$-map, then $A$ is a hyperbolic nil-endomorphism and $f$ is topologically conjugate to $A$.
\end{theorem}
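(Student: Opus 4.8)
The plan is to bootstrap from the toral case, Theorem \ref{Su Theorem}, to an arbitrary nil-manifold by induction on the nilpotency class of $N$, using the canonical torus fibration attached to the lower central series. First I would pass to the universal cover: since $f$ is a self-covering of $N/\Gamma$, any lift $\tilde f\colon N\to N$ is a homeomorphism of the simply connected group $N$, and it satisfies $\tilde f\circ\gamma = f_\#(\gamma)\circ\tilde f$ for $\gamma\in\Gamma$, where $f_\#\colon\Gamma\to\Gamma$ is the induced monomorphism on $\pi_1(N/\Gamma)=\Gamma$. Because $f\simeq A$, $f_\#$ is the restriction to $\Gamma$ of the (surjective) Lie-group endomorphism $\tilde A\colon N\to N$ covering $A$. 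A first, easy lemma is that $\tilde f$ and $\tilde A$ stay a bounded distance apart in a fixed left-invariant metric on $N$: the map $\delta(x)=\tilde A(x)^{-1}\tilde f(x)$ is left $\Gamma$-invariant, hence descends to $N/\Gamma$ and has precompact image by compactness, so $d(\tilde f(x),\tilde A(x))$ is uniformly bounded; along the way one checks that expansiveness, shadowing and the special property lift to $\tilde f$ and to the inverse limit.

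Second, I would show $A$ is hyperbolic, i.e.\ $d\tilde A$ has no eigenvalue of modulus one. If it did, $\tilde A$ would act with unit rate on a nontrivial closed subgroup of $N$; combined with the bounded-distance estimate this produces distinct full orbits of $\tilde f$ (hence of $f$ on its inverse limit) that remain uniformly close for all time, contradicting expansiveness of the $TA$-map $f$. This gives the first assertion of the theorem. I note that this step really uses the nilpotent structure: hyperbolicity of the toral endomorphism induced on $\mathfrak{n}/[\mathfrak{n},\mathfrak{n}]$ does not by itself force $A$ to be hyperbolic.

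Third, I would set up the induction along the central torus. Let $C$ be the last nontrivial term of the lower central series of $N$; then $C$ is central, $\Gamma\cap C$ is a lattice in $C$, and $N/\Gamma$ is a principal torus bundle $\pi\colon N/\Gamma\to\bar M$ over the nil-manifold $\bar M=(N/C)\big/(\Gamma C/C)$, whose nilpotency class is strictly smaller, with fiber the torus $T=C/(\Gamma\cap C)$. Since $\tilde A(C)\subseteq C$, $A$ restricts to a toral endomorphism of $T$ and induces a nil-endomorphism $\bar A$ of $\bar M$. Using hyperbolicity of $\tilde A$ and the bounded-distance estimate I would build, one layer of this central fibration at a time, the unique $\Gamma$-equivariant map $\tilde h\colon N\to N$ at bounded distance from the identity with $\tilde h\circ\tilde f=\tilde A\circ\tilde h$; on each single layer the construction is the structural-stability/shadowing argument on a torus that underlies Theorem \ref{Su Theorem}. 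This $\tilde h$ descends to a continuous surjection $h\colon N/\Gamma\to N/\Gamma$ with $h\circ f=A\circ h$.

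Finally I would upgrade $h$ to a homeomorphism. The semiconjugacy transports the fibration, since $\pi\circ h$ semiconjugates $f$ to $\bar A$; thus $f$ preserves the (generalized) fibration pulled back by $h$, and one can speak of its induced action on the $\bar M$-side and on the fibers. Expansiveness forces each point-preimage $h^{-1}(z)$ to be a continuum of local-stable-set type, and the hypothesis that $f$ is \emph{special} --- no branching of the unstable sets on $N/\Gamma$ itself --- collapses it to a single point; this is verified inductively, the base of the induction being exactly the injectivity statement inside Theorem \ref{Su Theorem}. Then $h$ is a homeomorphism and $f$ is topologically conjugate to $A$. The main obstacle, as this outline shows, lies in the third and fourth steps: because $f$ (unlike $A$) need not preserve the canonical torus fibration, one cannot induct on $f$ directly, and the genuine work is to push the fibration through the not-yet-injective semiconjugacy $h$ and to verify that the induced maps on the base nil-manifold and on the fiber torus are again \emph{special} $TA$-maps, so that the inductive hypothesis and Theorem \ref{Su Theorem} legitimately apply.
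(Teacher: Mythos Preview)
Your inductive strategy via the central torus fibration is not the route the paper takes, and as written it has a real gap. The paper never inducts on the nilpotency class. It works entirely on the universal cover: the unique $\overline{h}\colon N\to N$ with $\overline{A}\circ\overline{h}=\overline{h}\circ\overline{f}$ and $D(\overline{h},\mathrm{id}_N)<\infty$ is produced by a global shadowing argument (Lemma~\ref{Semiconjugacy}) and is already a homeomorphism of $N$ when $f$ is neither injective nor expanding. The entire substance of the proof is then to show this $\overline{h}$ is $\Gamma$-equivariant, $\overline{h}(x\gamma)=\overline{h}(x)\gamma$; a priori Lemma~\ref{lm Su 2 Lemma 2.4} gives this only for $\gamma\in\bigcap_i\overline{A}_*^i(\Gamma)$, a proper subgroup when $f$ is a genuine endomorphism. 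The paper obtains full equivariance by defining an auxiliary map $\overline{h}'(x)=\overline{h}(x\gamma^{-1})\gamma$ on the dense set $\bigcup_{\gamma\in\Gamma}\overline{W}^u(\gamma;\mathbf e)$ (well-defined because $f$ is \emph{special}, Proposition~\ref{lm su1 9}), proving $\overline{h}'$ is $D$-uniformly continuous (Proposition~\ref{su1 lm12}), and invoking the uniqueness clause of Lemma~\ref{Semiconjugacy} to conclude that its extension coincides with $\overline{h}$; this forces $\overline{h}(\gamma)=\gamma$ for all $\gamma$, and Proposition~\ref{final lemma} finishes. The special hypothesis is used here, in making $\overline{h}'$ well-defined and controlling unstable leaves, not in a final injectivity step.

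Your step~3 asserts one can build ``the unique $\Gamma$-equivariant $\tilde h$'' layer by layer, but this is precisely where the argument fails. Since $\tilde f$ is not a group homomorphism it does not carry $C$-cosets to $C$-cosets, so there is no map induced by $f$ on the base $\bar M$ or on the fiber torus $T$ to which the inductive hypothesis or Theorem~\ref{Su Theorem} could be applied; the phrase ``on each single layer the construction is the structural-stability/shadowing argument on a torus'' has no referent. You acknowledge this obstruction, but the proposed fix---push the fibration through the semiconjugacy $h$---is circular: $h$ exists on $N/\Gamma$ only once $\tilde h$ is known to be $\Gamma$-equivariant, which is the very thing in question (and is the heart of the paper's argument). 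Even granting some induced maps on base and fiber, no reason is given that they would again be special $TA$-maps, so the induction cannot proceed. In short, the difficulty you defer to the ``main obstacle'' paragraph is the whole theorem, and the fibration does not help; the paper's foliation-based argument on $N$ bypasses it entirely.
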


\begin{cor}\label{Mcor}
If $f:N/\Gamma \rightarrow N/\Gamma$ is a special Anosov endomorphism of a nil-manifold then it is conjugate to a hyperbolic nil-endomorphism.
\end{cor}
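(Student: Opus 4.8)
The plan is to deduce Corollary~\ref{Mcor} directly from the Main Theorem (Theorem~\ref{Theorem Main}); the only thing to verify is that a special Anosov endomorphism of a nil-manifold meets the hypotheses of that theorem. An Anosov endomorphism $f$ of the closed manifold $N/\Gamma$ is, by definition, a local diffeomorphism (its derivative is an isomorphism at every point), and a local diffeomorphism of a closed manifold onto itself is a finite-sheeted self-covering map. Thus $f$ is a self-covering map of the nil-manifold $N/\Gamma$, so by Dekimpe's result~\cite{[KD2]} it is homotopic to a unique nil-endomorphism $A$, and the standing hypotheses of Theorem~\ref{Theorem Main} are in force.

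Next I would check that $f$ is a $TA$-map: the uniform hyperbolicity of the derivative cocycle over the inverse limit of $(N/\Gamma,f)$ yields the expansiveness and shadowing properties in the inverse limit that define a topological Anosov covering map, which is the standard passage from smooth to topological hyperbolicity carried out in the framework of~\cite{[Ao-Hi]}. Moreover, for an Anosov endomorphism the condition of being \emph{special} means precisely that the unstable subbundle $E^{u}$ is well defined over $N/\Gamma$ itself rather than only over the inverse limit, which is exactly the specialness hypothesis in Theorem~\ref{Theorem Main}. Hence $f$ is a special $TA$-map, and Theorem~\ref{Theorem Main} then gives that $A$ is a hyperbolic nil-endomorphism with $f$ topologically conjugate to $A$, which is the assertion of the corollary.

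I do not expect any genuine obstacle: all of the substance lies in Theorem~\ref{Theorem Main}, and what remains are the routine translations ``smooth Anosov $\Rightarrow$ $TA$-covering map'' and ``specialness in the differentiable sense coincides with specialness in the topological sense'', both of which are already part of the background developed in~\cite{[Ao-Hi]}.
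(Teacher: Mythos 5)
Your deduction is logically clean as an application of Theorem~\ref{Theorem Main} taken at face value, and the preliminary observations (an Anosov endomorphism is a self-covering map, hence a $TA$-covering map, and the differentiable notion of ``special'' matches the topological one) agree with the paper's. But you have missed a structural subtlety that the paper's own proof of the corollary is specifically designed to handle: the proof of Theorem~\ref{Theorem Main} given in Section~3 is stated only under the standing assumption that $f$ is \emph{neither injective nor expanding}. This restriction is not cosmetic — it is needed because Lemma~\ref{Semiconjugacy} guarantees that the semiconjugacy $\overline{h}$ is a homeomorphism only when $f$ is not expanding, and the entire construction of $\overline{h}'$ via $D$-biuniform continuity (Lemma~\ref{su1 lm11} and Proposition~\ref{su1 lm12}) uses $\overline{h}^{-1}$.

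The paper therefore proves the corollary by a three-way case split: if $f$ is injective it is automatically special and Theorem~\ref{Su2 theorem2}(1) (repaired for nil-manifolds via Dekimpe's observation) applies; if $f$ is expanding, Theorem~\ref{Su2 theorem2}(2) applies; and only in the remaining case — $f$ not injective and not expanding — does the argument of Section~3 take over. Your proposal collapses all three cases into one invocation of the Main Theorem, which is fine if one accepts the Main Theorem's \emph{statement} as established, but it hides the fact that its \emph{proof} in this paper does not cover the injective or expanding cases. To make your argument self-contained within the paper's logic you would need to either add the same case analysis, or note explicitly that the injective and expanding cases of the Main Theorem are supplied by Sumi's theorems.
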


\section{Preliminaries}
Let $X$ and $Y$ be compact metric spaces and let $f:X \rightarrow X$ and $g:Y \rightarrow Y$ be continuous surjections. Then $f$ is said to be topologically conjugate to $g$ if there exists a homeomorphism $\varphi:Y \rightarrow X$ such that $f\circ\varphi=\varphi\circ g$.\\

Let $X$ be a compact metric space with metric $d$ . For $f:X \rightarrow X$ a continuous surjection, we let
\begin{align*}
&X_{f}=\{\tilde{x}=(x_{i}):x_{i}\in X \text{ and } f(x_{i})=x_{i+1}, i\in \mathbb{Z} \},\\
&\sigma_{f}((x_{i}))=(f(x_{i})).
\end{align*}
The map $\sigma_{f} : X_{f}\rightarrow X_{f}$ is called the \emph{shift map} determined by $f$. We call $(X_{f}, \sigma_{f})$ the \emph{inverse limit} of $(X, f)$. A homeomorphism $f:X\rightarrow X$ is called \emph{expansive} if there is a constant $e>0$ (called an \emph{expansive constant}) such that if $x$ and $y$ are any two distinct points of $X$ then $d(f^i(x), f^i(y)) > e$ for some integer $i$. A continuous surjection $f:X\rightarrow X$ is called \emph{$c$-expansive} if there is a constant $e>0$ such that for $\tilde{x},\tilde{y}\in X_{f}$ if $d(x_{i}, y_{i})\leq e$ for all $i\in \mathbb{Z}$ then $\tilde{x}=\tilde{y}$. In particular, if there is a constant $e>0$ such that for $x,y\in X$ if $d(f^{i}(x), f^{i}(y))\leq e$ for all $i\in \mathbb{N}$ then $x=y$, we say that $f$ is \emph{positively expansive}. A sequence of points $\{x_{i} : a<i<b\}$ of $X$ is called a $\delta$-pseudo orbit of $f$ if $d(f(x_{i}), x_{i+1})<\delta$ for $i\in(a, b-1)$. Given $\epsilon>0$ a $\delta$-pseudo orbit of $\{x_{i}\}$ is called to be $\epsilon$-traced by a point $x\in X$ if $d(f^{i}(x), x_{i})<\epsilon$ for every $i\in(a, b-1)$ . Here the symbols $a$ and $b$ are taken as $-\infty\leq a<b\leq\infty$ if $f$ is bijective and as $-1\leq a<b\leq\infty$ if $f$ is not bijective. $f$ has the \emph{pseudo orbit tracing property} (abbrev. POTP) if for every $\epsilon>0$ there is $\delta>0$ such that every $\delta$-pseudo orbit of $f$ can be $\epsilon$-traced by some point of $X$.

We say that a homeomorphism $f:X\rightarrow X$ is a \emph{topological Anosov map} (abbrev. $TA$-map) if $f$ is expansive and has POTP. Analogously, We say that a continuous surjection $f:X\rightarrow X$ is a \emph{topological Anosov map} if $f$ is $c$-expansive and has POTP, and say that $f$ is a \emph{topological expanding map} if $f$ is positively expansive and open. We can check that every topological expanding map is a $TA$-map (see \cite{[Ao-Hi]} Remark 2.3.10).\\

Let $X$ and $Y$ be metric spaces. A continuous surjection $f:X\rightarrow Y$ is called a \emph{covering map} if for $y\in Y$ there exists an open neighborhood $V_y$ of $y$ in $Y$ such that
\begin{equation*}
f^{-1}(V_y)=\bigcup_i U_i\quad (i\neq i'\Rightarrow U_i\cap U_i'=\emptyset)
\end{equation*}
where each of $U_i$ is open in $X$ and $f_{|U_i} : U_i\rightarrow V_y$ is a homeomorphism. A covering map $f:X\rightarrow Y$ is especially called a \emph{self-covering map} if $X = Y$. We say that a continuous surjection $f:X\rightarrow Y$ is a local homeomorphism if for $x\in X$ there is an open neighborhood $U_x$, of $x$ in $X$ such that $f(U_x)$ is open in $Y$ and $f_{|U_x} : U_x\rightarrow f(U_x)$ is a homeomorphism. It is clear that every covering map is a local homeomorphism. Conversely, if $X$ is compact, then a local homeomorphism $f:X\rightarrow Y$ is a covering map (see \cite{[Ao-Hi]} Theorem 2.1.1).\\

Let $\pi:Y\rightarrow X$ be a covering map. A homeomorphism $\alpha:Y\rightarrow Y$ is called a \emph{covering transformation} for $\pi$ if $\pi\circ  \alpha =\pi$ holds. We denote as $G(\pi)$ the set of all covering transformations for $\pi$. It is easy to see that $G(\pi)$ is a group, which is called the \emph{covering transformation group} for $\pi$.\\

Let $M$ be a closed smooth manifold and let $C^{1}(M, M)$ be the set of all $C^{1}$ maps of $M$ endowed with the $C^1$ topology. A map $f\in C^1(M, M)$ is called an \emph{Anosov endomorphism} if $f$ is a $C^{1}$ regular map and if there exist $C>0$ and
$0<\lambda<1$ such that for every $\tilde{x}=(x_i)\in M_{f}=\{\tilde{x}=(x_i):x_i\in M \text{ and } f(x_i)=x_{i+1},\; i\in \mathbb{Z}\}$ there is a splitting
\begin{equation*}
T_{x_i}M=E_{x_i}^{s}\oplus E_{x_i}^{u},\quad i\in \mathbb{Z}
\end{equation*}
(we show this by $T_{\tilde{x}}M=\bigcup_i(E_{x_i}^{s}\oplus E_{x_i}^{u})$) so that for all $i\in \mathbb{Z}$:
\begin{enumerate}
\item[(1)]$D_{x_i}f(E_{x_i}^\sigma)=E_{x_{i+1}}^\sigma$ where $\sigma=s,u$,
\item[(2)]for all $n\geq 0$
	\begin{align*}
	&\parallel D_{x_i}f^n(v)\parallel \leq C\lambda^n\parallel v\parallel\text{ if }v\in E_{x_i}^s,\\
	&\parallel D_{x_i}f^n(v)\parallel \geq C^{-1}\lambda^{-n}\parallel v\parallel\text{ if }v\in E_{x_i}^u.
	\end{align*}
\end{enumerate}
If, in particular, $T_{\tilde{x}}M= \bigcup_i E_{x_i}^u$ for all $\tilde{x}=(x_{i})\in M_{f}$ , then $f$ is said to be \emph{expanding differentiable map}, and if an Anosov endomorphism $f$ is injective then $f$ is called an \emph{Anosov diffeomorphism}. We can check that every Anosov endomorphism is a TA-map, and that every expanding differentiable map is a topological expanding map (see \cite{[Ao-Hi]} Theorem 1.2.1).\\

A map $f\in C^1(M, M)$ is said to be \emph{$C^1$-structurally stable} if there is an open neighborhood $\mathcal{N}(f)$ of $f$ in $C^1(M, M)$ such that $g\in \mathcal{N}(f)$ implies that $f$ and $g$ are topologically conjugate. Anosov \cite{[An]} proved that every Anosov diffeomorphism is $C^1$-structurally stable, and Shub \cite{[Sh]} showed the same result for expanding differentiable maps. However, Anosov endomorphisms which are not diffeomorphisms nor expanding do not be $C^1$-structurally stable (\cite{[Ma-Pu]},\cite{ [Pr]}).\\ 

A map $f\in C^1(M, M)$ is said to be \emph{$C^1$-inverse limit stable} if there is an open
neighborhood $\mathcal{N}(f)$ of $f$ in $C^1(M, M)$ such that $g\in \mathcal{N}(f)$ implies that the inverse limit $(M_f , \sigma_f)$ of $(M, f)$ and the inverse limit $(M_g, \sigma_g)$ of $(M, g)$ are topologically conjugate. Man\'e and Pugh \cite{[Ma-Pu]} proved that every Anosov endomorphism is $C^1$-inverse limit stable.\\

We define \emph{special TA-maps} as follows. Let $f:X\rightarrow X$ be a continuous
surjection of a compact metric space. Define the \emph{stable} and \emph{unstable} sets
\begin{align*}
W^s(x)&= \{y\in X : \lim_{n \rightarrow \infty}{d(f^n(x)}, f^n(y))=0\},\\
W^u(\tilde{x})&= \{y_0\in X:\exists\tilde{y}=(y_i)\in X_f \text{ s.t. } \lim_{i \rightarrow\infty}d(x_{-i}, y_{-i})=0\}.
\end{align*}
for $x\in X$ and $\tilde{x}\in X_f$. A TA-map $f:X \rightarrow X$ is special if $f$ satisfies the
property that $W^u(\tilde{x})=W^u(\tilde{y})$ for every $\tilde{x},\tilde{y}\in X_f$ with $x_0=y_0$. Every hyperbolic nil-endomorphism is a special TA-covering map (See \cite{[Su 2]} Remark 3.13). By this and Theorem \ref{Theorem Main} We have the following corollary:
\begin{cor}
A TA-covering map of a nil-manifold is special if and only if it is conjugate to a hyperbolic nil-endomorphism.
\end{cor}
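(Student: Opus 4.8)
The plan is to obtain both directions of the equivalence directly from Theorem~\ref{Theorem Main} together with the fact, recorded just above, that every hyperbolic nil-endomorphism is itself a special $TA$-covering map (\cite{[Su 2]} Remark 3.13).

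For the forward implication I would argue as follows. Suppose $f:N/\Gamma\to N/\Gamma$ is a special $TA$-covering map, and let $A$ be the nil-endomorphism homotopic to $f$, which exists and is unique by \cite{[KD2]}. Theorem~\ref{Theorem Main} then says precisely that $A$ is a hyperbolic nil-endomorphism and that $f$ is topologically conjugate to $A$; in particular $f$ is topologically conjugate to a hyperbolic nil-endomorphism, as required.

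For the reverse implication, suppose $f$ is topologically conjugate to a hyperbolic nil-endomorphism $B$, say $f\circ\varphi=\varphi\circ B$ with $\varphi$ a homeomorphism, so that $f=\varphi\circ B\circ\varphi^{-1}$. By \cite{[Su 2]} Remark 3.13, $B$ is a special $TA$-covering map, so it suffices to check that the three defining properties ``covering map'', ``$TA$-map'', and ``special'' pass from $B$ to $f$ under conjugacy. The first is immediate, since a composite of a covering map with homeomorphisms is again a covering map. For the second, one checks that $\varphi$ carries $\delta$-pseudo orbits of $B$ to pseudo orbits of $f$ (with modulus controlled by the uniform continuity of $\varphi$ on the compact space $N/\Gamma$) and conversely, so the POTP transfers; and $c$-expansiveness transfers by the same uniform-continuity argument applied on the inverse limit, so $f$ is a $TA$-map. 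For the third, the homeomorphism $\varphi$ induces a conjugacy $\tilde\varphi:(N/\Gamma)_B\to(N/\Gamma)_f$, $\tilde\varphi((x_i))=(\varphi(x_i))$, between the shift systems, and it sends each unstable set $W^u(\tilde x)$ of $B$ onto the unstable set $W^u(\tilde\varphi(\tilde x))$ of $f$; hence the condition ``$W^u(\tilde x)=W^u(\tilde y)$ whenever $x_0=y_0$'' transfers from $B$ to $f$, and $f$ is special.

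I do not expect a genuine obstacle here: the only points deserving a sentence of justification are that a hyperbolic nil-endomorphism really is a covering map (it is a local diffeomorphism, since none of the eigenvalues of its differential lie on the unit circle and so none vanish, hence a covering map of a compact manifold by \cite{[Ao-Hi]} Theorem 2.1.1) and the conjugacy-invariance of the ``special'' property, which is handled by the induced conjugacy $\tilde\varphi$ on inverse limit systems described above.
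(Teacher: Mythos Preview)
Your argument is correct and matches the paper's reasoning: the paper derives the corollary directly from Theorem~\ref{Theorem Main} together with the cited fact (\cite{[Su 2]} Remark 3.13) that hyperbolic nil-endomorphisms are special $TA$-covering maps, leaving the conjugacy-invariance of ``special'' implicit, whereas you spell it out via the induced conjugacy $\tilde\varphi$ on inverse limits. Note only that your verification that the ``covering map'' and ``$TA$-map'' properties transfer under conjugacy is superfluous here, since the corollary already hypothesizes that $f$ is a $TA$-covering map; only the transfer of ``special'' is needed for the reverse implication.
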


A \emph{Lie group} is a smooth manifold obeying the group properties and that satisfies the additional condition that the group operations are differentiable. Let $N$ be a Lie group. A vector field $X$ on $N$ is said to be \emph{invariant under left translations} if for each $g,h\in N, \;(dl_g)_h(X_h)=X_{gh}$, where $(dl_g)_h:T_hN\rightarrow T_{gh}N$ and $l_g:N\rightarrow N;\; x\mapsto gx$. A \emph{Lie algebra} $g$ is a vector space over some field $F$ together with a binary operation $[\cdot,\cdot]:g \times g \rightarrow g$ called the \emph{Lie bracket}, that satisfies:
\begin{enumerate}
\item[(1)] Bilinearity: $[ax+by,z]=a[x,z]+b[y,z],\; [z,ax+by]=a[z,x]+b[z,y] \; \forall x, y, z \in g$,
\item[(2)] Alternativity: $[x,x]=0 \; \forall x\in g$,
\item[(3)]The Jacobi Identity: $[x,[y,z]]+[z,[x,y]]+[y,[z,x]]=0 \; \forall x,y,z \in g$
\end{enumerate}
Let $Lie(N)$ be the set of all left-translation-invariant vector fields on $N$. It is a real vector space. Moreover, it is closed under Lie bracket. Thus $Lie(N)$ is a Lie subalgebra of the Lie algebra of all vector fields on $N$ and is called the \emph{Lie algebra} of $N$. A \emph{nilpotent} Lie group is a Lie group which is connected and whose Lie algebra is a nilpotent Lie algebra. That is, its Lie algebra's central series eventually vanishes.

A group $G$ is a \emph{torsion group} if every element in $G$ is of finite order. $G$ is called \emph{torsion free} if no element other than identity is of finite order. A \emph{discrete} subgroup of a topological group $G$ is a subgroup $H$ such that there is an open cover of $H$ in which every open subset contains exactly one element of $H$. In other words, the subspace topology of $H$ in $G$ is the discrete topology. A \emph{uniform} subgroup $H$ of $G$ is a closed subgroup such that the quotient space $G/H$ is compact.\\

We bring here the definitions of nil-manifolds and infra-nil-manifolds from Karel Dekimpe in \cite{[KD]} and \cite{[KD2]}.\\ 

Let $N$ be a Lie group and $Aut(N)$ be the set of all automorphisms of $N$. Assume
that $\overline{A}\in Aut(N)$ is an automorphism of $N$, such that there exists a discrete and cocompact subgroup
$\Gamma$ of $N$, with $\overline{A}(\Gamma)\subseteq \Gamma$. Then the space of left cosets $N/\Gamma$ is a closed manifold, and $\overline{A}$ induces an
endomorphism $A:N/\Gamma\rightarrow N/\Gamma,\; g\Gamma\mapsto \overline{A}(g)\Gamma$.
If we want this endomorphism to be Anosov, $\overline{A}$ must be hyperbolic (i.e. has no eigenvalue with modulus 1). It is known that this can happen only when $N$ is nilpotent. So we restrict ourselves to that case, where the resulting manifold $N/\Gamma$ is said to be a \emph{nil-manifold}. Such an endomorphism $A$ induced by an automorphism $\overline{A}$ is called a \emph{nil-endomorphism} and is said to be a \emph{hyperbolic nil-automorphism}, when $\overline{A}$ is hyperbolic. If in the above definition, $\overline{A}(\Gamma)= \Gamma$, the induced map is called a \emph{nil-automorphism}.

All tori, $\mathbb{T}^n=\mathbb{R}^n/\mathbb{Z}^n$ are examples of nil-manifolds.\\

Let $X$ be a topological space and let $G$ be a group. We say that $G$ \emph{acts} (continuously) on $X$ if to $(g, x)\in G \times X$ there corresponds a point $g\cdot x$ in $X$ and the following conditions are satisfied:
\begin{enumerate}
\item[(1)]$e\cdot x = x$ for $x\in X$ where $e$ is the identity,
\item[(2)]$g\cdot(g'\cdot x) = gg'\cdot x$  for $x\in X$ and $g,g'\in G$,
\item[(3)]for each $g\in G$ a map $x\mapsto g\cdot x$ is a homeomorphism of $X$.
\end{enumerate}
When $G$ acts on $X$, for $x,y\in X$ letting
\begin{equation*}
x\sim y \Leftrightarrow y=g\cdot x \text{ for some } g\in G
\end{equation*}
an equivalence relation $\sim$ in $X$ is defined. Then the identifying space $X/\sim$,
denoted as $X/G$, is called the orbit space by $G$ of $X$. It follows that for $x\in X$,
$[x]=\{g\cdot x : g\in G\}$ is the equivalence class.

An action of $G$ on $X$ is said to be \emph{properly discontinuous} if for each $x\in X$ there exists a neighborhood $U(x)$ of $x$ such that $U(x) \cap gU(x) = \emptyset$ for all $g\in G$ with $g \neq e_G$. Here $gU(x) =\{g\cdot y : y \in U(x)\}$.\\

Now we give an extended definition of nil-manifolds. Let $N$ be a connected and simply connected nilpotent Lie group and $Aut(N)$ be the group of continuous automorphisms of $N$. Then $Aff(N)= N\rtimes Aut(N)$ acts on $N$ in the following way:
\begin{equation*}
\forall(n,\gamma)\in Aff(N),\forall x\in N:(n,\gamma).x=n\gamma(x).
\end{equation*}

So an element of $Aff(N)$ consists of a translational part $n\in N$ and a linear part $\gamma\in Aut(N)$ (as a set $Aff(N)$ is just $N\times Aut(N)$) and $Aff(N)$ acts on $N$ by first applying the linear part and then multiplying on the left by the translational part). In this way, $Aff(N)$ can also be seen as a subgroup of $Diff(N)$.

Now, let $C$ be a compact subgroup of $Aut(N)$ and consider any torsion free discrete subgroup
$\Gamma$ of $N\rtimes C$, such that the orbit space $N/\Gamma$ is compact. Note that $\Gamma$ acts on $N$ as being also a subgroup of $Aff(N)$.
The action of $\Gamma$ on $N$ will be free and properly discontinuous, so $N/\Gamma$ is a manifold, which is called an \emph{infra-nil-manifold}.

Klein bottle is an example of infra-nil-manifolds.\\

In what follows, we will identify $N$ with the subgroup $N \times \{id\}$ of $N\rtimes Aut(N) = Aff(N)$, $F$ with the subgroup $\{id\} \times F$ and $Aut(N)$ with the subgroup $\{id\} \times Aut(N)$.

It follows from Theorem 1 of L. Auslander in \cite{[Au]}, that $\Gamma \cap N$ is a uniform lattice of $N$ and that $ \Gamma / (\Gamma \cap N)$ is a finite group. This shows that the fundamental group of an infra-nil-manifold $N/\Gamma$ is virtually nilpotent (i.e. has a nilpotent normal subgroup of finite index). In fact $\Gamma \cap N$ is a maximal nilpotent subgroup of $\Gamma$ and it is the only normal subgroup of $\Gamma$ with this property. (This also follows from \cite{[Au]}).

If we denote by $p : N\rtimes C\rightarrow C$ the natural projection on the second factor, then $p(\Gamma)=\Gamma \cap N$ is a uniform lattice of $N$ and that $ \Gamma / (\Gamma \cap N)$. Let $F$ denote this finite group $p(\Gamma)$, then we will refer to $F$ as being the \emph{holonomy group of $\Gamma$} (or of the infra-nil-manifold $N/\Gamma$). It follows that $\Gamma \subseteq N\rtimes F$. In case $F = \{id\}$, so $\Gamma \subseteq N$, the manifold $N/\Gamma$ is a nil-manifold. Hence, any infra-nil-manifold $N/\Gamma$ is finitely covered by a nil-manifold $N/(\Gamma \cap N)$. This also explains the prefix "infra".

Fix an infra-nil-manifold $N/\Gamma$, so $N$ is a connected and simply connected nilpotent
Lie group and $\Gamma$ is a torsion free, uniform discrete subgroup of $N\rtimes F$, where $F$ is a finite subgroup of $Aut(N)$. We will assume that $F$ is the holonomy group of $\Gamma$ (so for any $\mu\in F$, there exists an $n\in N$ such that $(n, \mu)\in \Gamma)$.

We can say that an element of $\Gamma$ is of the form $n\mu$ for some $n\in N$ and some $\mu\in F$. Also, any element of $Aff(N)$
can uniquely be written as a product $n\psi$, where $n\in N$ and $\psi\in Aut(N)$. The product in $Aff(N)$ is then given as
\begin{equation*}
\forall n_1,n_2\in N, \forall \psi_1,\psi_2\in Aut(N):n_1\psi_1n_2\psi_2=n_1\psi_1(n_2)\psi_1\psi_2.
\end{equation*}

Now we can define infra-nil-endomorphisms as follows:\\

Let $N$ be a connected, simply connected nilpotent Lie group and $F\subseteq Aut(N)$ a finite group. Assume that $\Gamma$ is a torsion free, discrete and uniform subgroup of $N\rtimes F$. Let $\overline{\mathcal{A}}:N\rtimes F\rightarrow N\rtimes F$ be an automorphism, such that $\overline{\mathcal{A}}(F) = F$ and $\overline{\mathcal{A}}(\Gamma)\subseteq \Gamma$, then, the map
\begin{equation*}
A:N/\Gamma\rightarrow N/\Gamma,\; \Gamma \cdot n \mapsto \Gamma \cdot \overline{\mathcal{A}}(n).
\end{equation*}
is the \emph{infra-nil-endomorphism} induced by $\overline{\mathcal{A}}$. In case $\overline{\mathcal{A}}(\Gamma)= \Gamma$, we call $A$ an \emph{infra-nil-automorphism}.\\

In the definition above, $\Gamma \cdot n$ denotes the orbit of n under the action of $\Gamma$. The computation above shows that $A$ is well defined. Note that infra-nil-automorphisms are diffeomorphisms, while in general an infra-nil-endomorphism is a self-covering map.

The following theorem shows that the only maps of an infra-nil-manifold, that lift to an automorphism of the corresponding nilpotent Lie group are exactly the infra-nil-endomorphisms defined above.

\begin{theorem}[\cite{[KD2]} Theorem 3.4]
Let $N$ be a connected and simply connected nilpotent Lie group, $F \subseteq Aut(N)$ a
finite group and $\Gamma$ a torsion free discrete and uniform subgroup of $N\rtimes F$ and assume that the holonomy group of $\Gamma$ is $F$. If $\overline{A}: N \rightarrow N$ is an automorphism for which the map 
\begin{equation*}
A:N/\Gamma\rightarrow N/\Gamma,\; \Gamma \cdot n \mapsto \Gamma \cdot \overline{A}(n).
\end{equation*}
is well defined (meaning that $\Gamma \cdot \overline{A}(n) = \Gamma \cdot \overline{A}(\gamma \cdot n)$ for all $\gamma\in \Gamma$), then
\begin{equation*}
\overline{\mathcal{A}}:N\rtimes F\rightarrow N\rtimes F : x \mapsto \phi x \phi^{-1} \text{ (conjugation in } Aff(N))
\end{equation*}
is an automorphism of $N\rtimes F$, with $\overline{\mathcal{A}}(F) = F$ and $\overline{\mathcal{A}}(\Gamma) \subseteq \Gamma$. Hence, $A$ is an infra-nil-endomorphism.
\end{theorem}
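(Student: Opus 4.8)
The plan is to show that the inner automorphism $\overline{\mathcal{A}}$ of $Aff(N)$ determined by $\phi=\overline{A}$ --- which we regard as an element of $Aut(N)\subseteq Aff(N)$ --- restricts to $N\rtimes F$ as an automorphism preserving $F$ and carrying $\Gamma$ into itself; the assertion that $A$ is then an infra-nil-endomorphism is immediate from the definition recalled just above, once one notes that $\overline{\mathcal{A}}|_{N}=\overline{A}$. That last observation is the only direct computation needed at the outset: using the product rule $n_1\psi_1n_2\psi_2=n_1\psi_1(n_2)\psi_1\psi_2$ in $Aff(N)$ one gets $\overline{A}\,n\,\overline{A}^{-1}=\overline{A}(n)$ for all $n\in N$, so $\overline{\mathcal{A}}(N)=N$ and $\overline{\mathcal{A}}|_{N}=\overline{A}$. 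All the content therefore lies in describing $\overline{\mathcal{A}}$ on $\Gamma$ and on $F$.

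First I would prove $\overline{\mathcal{A}}(\Gamma)\subseteq\Gamma$; this is the step that uses the well-definedness hypothesis on $A$ and is the heart of the argument. Fix $\gamma\in\Gamma$ and consider the diffeomorphism $g:=\overline{A}\circ\gamma\circ\overline{A}^{-1}$ of $N$, where $\gamma$ and $\overline{A}$ act through the faithful action $Aff(N)\hookrightarrow Diff(N)$. For $x\in N$, put $n=\overline{A}^{-1}(x)$; the hypothesis $\Gamma\cdot\overline{A}(\gamma\cdot n)=\Gamma\cdot\overline{A}(n)$ says exactly that $g(x)=\overline{A}(\gamma\cdot n)\in\Gamma\cdot\overline{A}(n)=\Gamma\cdot x$. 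Thus $g$ moves every point of $N$ inside its own $\Gamma$-orbit. Since the $\Gamma$-action is properly discontinuous, hence free, the sets $C_{\gamma_0}:=\{x\in N: g(x)=\gamma_0\cdot x\}$ for $\gamma_0\in\Gamma$ are closed, pairwise disjoint (by freeness), and cover $N$; and each is open, because if $g(x_0)=\gamma_0\cdot x_0$ and $U$ is a neighbourhood of $g(x_0)$ disjoint from all of its non-trivial $\Gamma$-translates, then for $x$ near $x_0$ both $g(x)$ and $\gamma_0\cdot x$ lie in $U$, forcing $g(x)=\gamma_0\cdot x$. As $N$ is connected, exactly one $C_{\gamma_0}$ is all of $N$, so $g$ equals the action of a single $\gamma_0\in\Gamma$; by faithfulness $\overline{\mathcal{A}}(\gamma)=\overline{A}\gamma\overline{A}^{-1}=\gamma_0\in\Gamma$.

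Next I would deduce $\overline{\mathcal{A}}(F)=F$, and here the hypothesis that $F$ is the holonomy group of $\Gamma$ is used. For $\mu\in F$ choose $n_\mu\in N$ with $n_\mu\mu\in\Gamma$. By the previous step $\overline{\mathcal{A}}(n_\mu\mu)\in\Gamma\subseteq N\rtimes F$; on the other hand the product rule in $Aff(N)$ gives $\overline{\mathcal{A}}(n_\mu\mu)=\overline{A}\,n_\mu\mu\,\overline{A}^{-1}=\overline{A}(n_\mu)\,(\overline{A}\mu\overline{A}^{-1})$, and recalling that any element of $Aff(N)$ is uniquely a product $n\psi$ with $n\in N$, $\psi\in Aut(N)$, the $Aut(N)$-part of this element is $\overline{A}\mu\overline{A}^{-1}$. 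Since the element lies in $N\rtimes F$, its $Aut(N)$-part must lie in $F$, so $\overline{A}\mu\overline{A}^{-1}\in F$. Therefore $\overline{A}F\overline{A}^{-1}\subseteq F$, and since $F$ is finite and conjugation by $\overline{A}$ is injective, $\overline{\mathcal{A}}(F)=\overline{A}F\overline{A}^{-1}=F$.

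Finally I would assemble the pieces. Every element of $N\rtimes F$ has the form $n\mu$ with $n\in N$, $\mu\in F$, and $\overline{\mathcal{A}}(n\mu)=\overline{\mathcal{A}}(n)\overline{\mathcal{A}}(\mu)\in N\cdot F=N\rtimes F$; combined with $\overline{\mathcal{A}}(N)=N$ and $\overline{\mathcal{A}}(F)=F$ this shows $\overline{\mathcal{A}}$ maps $N\rtimes F$ bijectively onto itself. Being also a group homomorphism (the restriction of the inner automorphism of $Aff(N)$), $\overline{\mathcal{A}}$ is an automorphism of $N\rtimes F$ with $\overline{\mathcal{A}}(F)=F$ and $\overline{\mathcal{A}}(\Gamma)\subseteq\Gamma$. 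Since $\overline{\mathcal{A}}|_{N}=\overline{A}$, the self-map $\Gamma\cdot n\mapsto\Gamma\cdot\overline{A}(n)=\Gamma\cdot\overline{\mathcal{A}}(n)$ of $N/\Gamma$ is, by the definition recalled before the theorem, precisely the infra-nil-endomorphism induced by $\overline{\mathcal{A}}$; that is, $A$ is an infra-nil-endomorphism. I expect the only genuine obstacle to be the passage in the second paragraph from the pointwise statement ``$g$ keeps each point in its $\Gamma$-orbit'' to the global conclusion ``$g\in\Gamma$''; the rest is bookkeeping inside $Aff(N)=N\rtimes Aut(N)$.
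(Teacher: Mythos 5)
The paper does not reprove this statement; it simply cites it as Theorem~3.4 of Dekimpe \cite{[KD2]}, so there is no in-paper argument for me to compare yours against. On its own terms your proof is correct and, as far as I can tell, follows the same line as Dekimpe's original argument. The crux is the second paragraph: you pass from the hypothesis ``$g=\overline{A}\gamma\overline{A}^{-1}$ keeps every point of $N$ in its $\Gamma$-orbit'' to ``$g$ acts as a single element $\gamma_0\in\Gamma$'' by a clean open-closed decomposition; since the paper's definition of properly discontinuous already forces the $\Gamma$-action on $N$ to be free, the sets $C_{\gamma_0}$ really are pairwise disjoint as you claim, and the discreteness used to make them open is exactly the same proper-discontinuity. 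The remaining steps (the computation $\overline{A}\,n\,\overline{A}^{-1}=\overline{A}(n)$ from the $Aff(N)$ product rule, the use of the holonomy hypothesis to choose $n_\mu$ with $n_\mu\mu\in\Gamma$ and read off the $Aut(N)$-component, the finiteness of $F$ to upgrade $\overline{A}F\overline{A}^{-1}\subseteq F$ to equality, and the assembly that $\overline{\mathcal{A}}$ restricts to an automorphism of $N\rtimes F$) are all sound and are the natural bookkeeping. The only notational wrinkle is that the theorem's statement writes $\phi$ without defining it --- this is inherited from the source and should be $\overline{A}$, which is how you correctly read it.
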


Let $X$ be a topological space. We write $\Omega(X;x_0)$ the family of all closed paths from $x_0$ to $x_0$. Let $\Omega(X;x_0)/\sim$ be the identifying space with respect to the equivalence relation $\sim$ by homotopty. We write this set
\begin{equation*}
\pi_1(X;x_0)=\Omega(X;x_0)/\sim.
\end{equation*}
The group $\pi_1(X;x_0)$ is called the \emph{fundamental group} at a base point $x_0$ of $X$. If, in particular, $\pi_1(X;x_0)$ is a group consisting of the identity, then $X$ is said to be \emph{simply connected} with respect to a base point $x_0$.

Let $x_0$ and $x_1$ be points in $X$. If there exists a path $w$ joining $x_0$ and $x_1$, then we can define a map
\begin{equation*}
w_\sharp:\Omega(X;x_1)\rightarrow\Omega(X;x_0),\text{ by } w_\sharp(u)=(w\cdot u)\cdot\overline{w},
\end{equation*}
where $u\in \Omega(X;x_1)$, $(w\cdot u)$ is the concatenation of $w$ and $u$ and $\overline{w}$ is $w$ in reverse direction. For $u,v\in \Omega(X;x_1)$ suppose $u\sim v$. Then $w_\sharp(u)\sim w_\sharp(v)$ and thus $w_\sharp$ induces a map
\begin{equation*}
w_*:\pi_1(X;x_1)\rightarrow\pi_1(X;x_0),\text{ by } w_*([u])=[w_\sharp(u)],
\end{equation*}
this map is an isomorphism (see \cite{[Ao-Hi]} Lemma 6.1.4).
\begin{remark}
If $X$ is a path connected space then we can remove the base point and write $\pi_1(X;x_0)=\pi_1(X)$.
\end{remark}

Let $f,g: X \rightarrow Y$ be homotopic and $F$ a homotopy from $f$ to $g$ ($f \sim g \; (F)$). Then for $x_0\in X$ we can define a path $w\in \Omega(Y;f(x_0),g(x_0))$ by 
\begin{equation*}
w(t)=F(x_0,t)\quad t\in[0,1],
\end{equation*}
and the relation between homomorphisms $f_*:\pi_1(X;x_0)\rightarrow\pi_1(Y;f(x_0))$ and $g_*:\pi_1(X;x_0)\rightarrow\pi_1(Y;g(x_0))$ is: $g_*=\overline{w}_*\circ f_*$ (see \cite{[Ao-Hi]} Lemma 6.1.9).\\

Let $X$ and $Y$ be topological spaces and $f: X \rightarrow Y$ a continuous map. Take $x_0 \in X$ and let $y_0 = f(x_0)$. It is clear that $fu = f \circ u \in \Omega(X, y_0)$ for $u\in \Omega(X;x_0)$. Thus we can find a map
\begin{equation*}
f_\sharp:\Omega(X;x_0)\rightarrow\Omega(Y;Y_0),\text{ by } f_\sharp(u)=fu,
\end{equation*}
where $u\in \Omega(X;x_0)$. If $u \sim v \; (F)$ for $u,v\in \Omega(X;x_0)$, then we have $fu \sim fv \; (f\circ F)$, from which the following map will be induced:
\begin{equation*}
f_*:\pi_1(X;x_0)\rightarrow\pi_1(Y;y_0),\text{ by } f_*([u])=[f_\sharp(u)]=[fu],
\end{equation*}
It is easy to check that $f_*$ is a homomorphism. We say that $f_*$ is a \emph{homomorphism induced from a continuous map} $f: X \rightarrow Y$.

\begin{lemma}[\cite{[Ao-Hi]} Remark 6.7.9]\label{Ao Remark 6.7.9}
Let $f,g:N/\Gamma \rightarrow N/\Gamma$ be continuous maps of a nil-manifold and let $f(x_0) = g(x_0)$ for some $x_0 \in N/\Gamma$. Then $f$ and $g$ are homotopic if and only if $f_* = g_* : \pi_1 (N/\Gamma,x_0) \rightarrow \pi_1 (N/\Gamma,f(x_0))$.
\end{lemma}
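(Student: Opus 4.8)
The plan is to exploit the fact that a nil-manifold is aspherical. Since $N$ is connected, simply connected and nilpotent, the exponential map of $N$ is a diffeomorphism onto $N$, so $N$ is diffeomorphic to a Euclidean space and in particular contractible. The lattice $\Gamma$ acts on $N$ by left translations freely and properly discontinuously, hence $p:N\to N/\Gamma$ is the universal covering, $\pi_1(N/\Gamma,x_0)\cong\Gamma$, and $\pi_k(N/\Gamma)\cong\pi_k(N)=0$ for all $k\geq 2$. Thus $N/\Gamma$ is a $K(\Gamma,1)$-space, and the lemma becomes an instance of the classical classification of maps into an aspherical space.

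For the direct implication, suppose $f$ and $g$ are homotopic by a homotopy $F$ fixing $x_0$ (this is the relevant convention, since $f(x_0)=g(x_0)$ is assumed). Then the path $w(t)=F(x_0,t)$ is the constant loop at $y_0:=f(x_0)$, so $\overline{w}_*=\mathrm{id}$, and the relation $g_*=\overline{w}_*\circ f_*$ recalled above gives $f_*=g_*$. (If only a free homotopy is available, the same relation yields instead that $f_*$ and $g_*$ differ by the inner automorphism of $\pi_1(N/\Gamma,y_0)$ determined by $[w]$; one then reads the statement with the basepoint-preserving convention, under which the two agree.)

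The substantive direction is the converse. Assume $f_*=g_*$ and put $y_0=f(x_0)=g(x_0)$. Choose $\tilde x_0\in p^{-1}(x_0)$ and lifts $\tilde f,\tilde g:N\to N$ of $f,g$ with $\tilde f(\tilde x_0)=\tilde g(\tilde x_0)$; such lifts exist because $N$ is simply connected and $f(x_0)=g(x_0)$. Translating $f_*=g_*$ through the dictionary between the covering $p$ and $\pi_1$, the maps $\tilde f$ and $\tilde g$ are equivariant with respect to one and the same endomorphism $\phi:\Gamma\to\Gamma$, that is, $\tilde f(\gamma\cdot x)=\phi(\gamma)\cdot\tilde f(x)$ and likewise for $\tilde g$. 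It then remains to produce a $\phi$-equivariant homotopy $\tilde H:N\times[0,1]\to N$ from $\tilde f$ to $\tilde g$, for such an $\tilde H$ descends to a homotopy $H:N/\Gamma\times[0,1]\to N/\Gamma$ from $f$ to $g$. Equivalently, one invokes the Eilenberg--MacLane theorem that, for a CW complex $X$, based homotopy classes of based maps $X\to K(\Gamma,1)$ are in natural bijection with the set of homomorphisms $\pi_1(X)\to\Gamma$; applied to $X=N/\Gamma$ with $f_*=g_*$ this gives exactly the desired homotopy.

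I expect the main obstacle to be precisely this last equivariance step. Although $N$ is contractible, the deck action of $\Gamma$ is by left translations of a genuinely nonabelian nilpotent group, so the naive ``straight-line'' homotopy between the lifts — which does the job for tori, where the deck transformations are affine and equivariance is automatic — need not be $\phi$-equivariant here. The correct argument is the obstruction-theoretic one: build $H$ over the skeleta of $N/\Gamma$; over the $1$-skeleton the obstruction to matching $f$ and $g$ up to homotopy is measured by the difference of the induced maps on $\pi_1$ and hence vanishes by hypothesis, while the successive obstructions to extending over the $(k+1)$-cells lie in the cohomology of $N/\Gamma$ with coefficients in $\pi_k(N/\Gamma)=0$ for $k\geq 2$. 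This is the same mechanism underlying Aoki--Hiraide's original remark for $\mathbb{T}^n$, now supplied with the asphericity of the nil-manifold $N/\Gamma$ in place of that of the torus.
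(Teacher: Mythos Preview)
The paper does not supply its own proof of this lemma: it is quoted verbatim from Aoki--Hiraide (Remark~6.7.9) and left unproved. So there is no argument in the paper to compare against.

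Your argument is correct and is the standard route. The key input is that $N/\Gamma$ is aspherical, which you justify via contractibility of the simply connected nilpotent Lie group $N$; the converse direction then reduces to the Eilenberg--MacLane classification of maps into a $K(\Gamma,1)$, or equivalently to obstruction theory with vanishing higher homotopy. Your remark that the naive straight-line homotopy used in the torus case need not be $\Gamma$-equivariant for a genuinely nonabelian $\Gamma$ is well taken and explains why the obstruction-theoretic argument, rather than an explicit formula, is the right tool here.

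One caveat worth flagging explicitly (you already touch on it): as literally stated, the forward implication is only guaranteed for \emph{based} homotopies. Since $\Gamma=\pi_1(N/\Gamma)$ is in general a nonabelian nilpotent group, a free homotopy from $f$ to $g$ only yields $g_*=\overline{w}_*\circ f_*$ for some possibly nontrivial inner automorphism $\overline{w}_*$. In the paper the lemma is applied (in Lemma~\ref{fbar=Abar on gamma}) to $f$ and the nil-endomorphism $A$ homotopic to it, both fixing $[e]$, so the based reading is the intended one; your parenthetical correctly identifies this as the operative convention.
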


\begin{theorem}[\cite{[Ao-Hi]} Theorem 6.3.4]\label{[Ao-Hi] Theorem 6.3.4}
If $\pi:Y\rightarrow X$ is the universal covering, then for each $b\in Y$
\begin{enumerate}
\item[(1)]the map $\alpha\mapsto \alpha(b)$ is a bijection from $G(\pi)$ onto $\pi^{-1}(\pi(b))$,
\item[(2)]the map $\psi_b:G(\pi)\rightarrow \pi_1(X,\pi(b))$ by $\alpha\mapsto [\pi\circ u_{\alpha(b)}]$ is an isomorphism where $u_{\alpha(b)}$ is a path from $b$ to $\alpha(b)$.
\end{enumerate}
Furthermore, the action of $G(\pi)$ on $Y$ is properly discontinuous and $Y/G(\pi)$ is homeomorphic to $X$ .
\end{theorem}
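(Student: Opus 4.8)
The plan is to derive this classical statement from the standard package of lifting facts for the universal covering $\pi:Y\to X$. Since $\pi$ is a universal covering, $Y$ is connected, locally path-connected and simply connected, and $X$ is locally path-connected, so the usual machinery applies. The three tools I will use are: \textbf{(L1)} unique lifting of paths — a path in $X$ with a chosen lift of its initial point has exactly one lift through $\pi$; \textbf{(L2)} the monodromy / homotopy lifting property — a path-homotopy rel endpoints in $X$ lifts to a path-homotopy rel endpoints in $Y$, so path-homotopic loops lift to paths with the same terminal point; and \textbf{(L3)} rigidity of lifts — two continuous maps $Y\to Y$ with $\pi\circ(\cdot)=\pi$ that agree at one point coincide, since $Y$ is connected. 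I will also use the \emph{lifting criterion}: a map $h$ from a connected, locally path-connected space into $X$ lifts through $\pi$ iff $h_*$ of its fundamental group lands in $\pi_*\pi_1(Y)$, which is the trivial group here.

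For part (1): the map $\alpha\mapsto\alpha(b)$ takes values in $\pi^{-1}(\pi(b))$ because $\pi\circ\alpha=\pi$. It is injective by (L3) (two covering transformations agreeing at $b$ are equal). For surjectivity, fix $c\in\pi^{-1}(\pi(b))$; applying the lifting criterion to $\pi:Y\to X$ itself (the obstruction group $\pi_*\pi_1(Y,c)$ being trivial) gives a continuous $\alpha:Y\to Y$ with $\pi\circ\alpha=\pi$ and $\alpha(b)=c$, and symmetrically a $\beta:Y\to Y$ with $\pi\circ\beta=\pi$ and $\beta(c)=b$. Then $\beta\circ\alpha$ and $\mathrm{id}_Y$ are lifts of $\pi$ fixing $b$, so $\beta\circ\alpha=\mathrm{id}_Y$ by (L3), and likewise $\alpha\circ\beta=\mathrm{id}_Y$; hence $\alpha$ is a homeomorphism with $\pi\circ\alpha=\pi$, i.e.\ $\alpha\in G(\pi)$, and $\alpha(b)=c$.

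For part (2): since $\pi(\alpha(b))=\pi(b)$, the loop $\pi\circ u_{\alpha(b)}$ based at $\pi(b)$ makes sense, and its class is independent of the chosen path $u_{\alpha(b)}$ because any two paths in $Y$ from $b$ to $\alpha(b)$ are path-homotopic ($Y$ simply connected), so $\psi_b$ is well defined. It is a homomorphism: if $u$ goes from $b$ to $\alpha(b)$ and $v$ from $b$ to $\beta(b)$, then $u\cdot(\alpha\circ v)$ goes from $b$ to $(\alpha\circ\beta)(b)$ and $\pi\circ\bigl(u\cdot(\alpha\circ v)\bigr)=(\pi\circ u)\cdot(\pi\circ v)$ using $\pi\circ\alpha=\pi$, whence $\psi_b(\alpha\circ\beta)=\psi_b(\alpha)\,\psi_b(\beta)$. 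It is injective: if $[\pi\circ u_{\alpha(b)}]=1$, lift the rel-endpoint homotopy to the constant loop through $\pi$ starting at $b$; by (L1) the lift of $\pi\circ u_{\alpha(b)}$ starting at $b$ is $u_{\alpha(b)}$, and (L2) keeps its terminal endpoint fixed along the homotopy, which ends at the constant path at $b$, forcing $\alpha(b)=b$ and hence $\alpha=\mathrm{id}_Y$ by part (1). It is surjective: given a loop $\gamma$ at $\pi(b)$, let $\tilde\gamma$ be its lift starting at $b$ (via (L1)); then $\tilde\gamma(1)\in\pi^{-1}(\pi(b))$, so part (1) supplies $\alpha\in G(\pi)$ with $\alpha(b)=\tilde\gamma(1)$, and $\psi_b(\alpha)=[\pi\circ\tilde\gamma]=[\gamma]$.

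Finally, for the concluding assertions: given $y\in Y$, pick an evenly covered neighborhood $V$ of $\pi(y)$ with $\pi^{-1}(V)=\bigsqcup_i U_i$ and let $U$ be the sheet containing $y$; for $\alpha\in G(\pi)$ the set $\alpha(U)$ is open, $\pi|_{\alpha(U)}$ is a homeomorphism onto $V$ (as $\pi\circ\alpha=\pi$), so $\alpha(U)$ is again one of the sheets $U_i$, and if $\alpha(U)\cap U\neq\emptyset$ then $\alpha(U)=U$; then $\alpha|_U$ fixes the injective map $\pi|_U$, so $\alpha|_U=\mathrm{id}$ and $\alpha=\mathrm{id}_Y$ by (L3) — hence the action is properly discontinuous. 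For the homeomorphism $Y/G(\pi)\cong X$, the $G(\pi)$-invariance of $\pi$ factors it as $\bar\pi\circ q=\pi$ with $q$ the orbit map; $\bar\pi$ is continuous and surjective, injective because the argument in part (1) applied at an arbitrary point shows $G(\pi)$ acts transitively on every fiber of $\pi$, and open because covering maps are open and $\bar\pi(W)=\pi(q^{-1}(W))$; hence $\bar\pi$ is a homeomorphism. The only genuinely delicate point is the surjectivity in part (1): it forces us to invoke the lifting criterion — this is exactly where simple-connectedness of $Y$ enters — and then to promote the resulting lift to a homeomorphism by building a two-sided inverse and applying (L3); everything else is bookkeeping with (L1)--(L3).
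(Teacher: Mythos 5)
The paper does not supply its own proof of this statement: it simply cites it as Theorem 6.3.4 of Aoki--Hiraide and uses it as a black box (e.g.\ in the proof of Corollary~\ref{isomorphismGamma}). Your proof is the standard covering-space argument and is correct: part (1) via the lifting criterion (which is exactly where simple-connectedness of $Y$ enters) together with uniqueness of lifts, part (2) by noting $\psi_b$ is well defined since $Y$ is simply connected, verifying the homomorphism identity from $\pi\circ\alpha=\pi$, injectivity via the homotopy lifting property, and surjectivity by lifting a loop and invoking part (1); proper discontinuity from evenly covered sheets; and $Y/G(\pi)\cong X$ because the induced map $\bar\pi$ is a continuous open bijection. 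There is nothing to compare against in the paper, so no further remarks are needed.
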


\begin{theorem}[\cite{[Ao-Hi]} Theorem 6.3.7]\label{[Ao-Hi] Theorem 6.3.7}
Let $G$ be a group and $X$ a topological space. Suppose that $G$ acts on $X$ and the action is properly discontinuous. Then
\begin{enumerate}
\item[(1)]the natural projection $\pi:X\rightarrow X/G$ is a covering map,
\item[(2)]if $X$ is simply connected, then the fundamental group $\pi_1(X/G)$ is isomorphic
to $G$.
\end{enumerate}
\end{theorem}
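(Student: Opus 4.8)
The statement is the standard covering-space dictionary for properly discontinuous actions, and I would prove the two parts in turn, deriving (2) from (1) together with Theorem~\ref{[Ao-Hi] Theorem 6.3.4}.

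For (1) the plan is to exhibit explicit evenly covered neighborhoods. First I would record that $\pi\colon X\to X/G$ is a continuous open surjection: continuity and surjectivity are immediate from the definition of the quotient topology, and for openness note that for open $W\subseteq X$ one has $\pi^{-1}(\pi(W))=\bigcup_{g\in G}gW$, which is open because each $x\mapsto g\cdot x$ is a homeomorphism, whence $\pi(W)$ is open. Now fix $[x]\in X/G$ and, using proper discontinuity, choose a neighborhood $U=U(x)$ of $x$ with $U\cap gU=\emptyset$ for all $g\neq e_G$. Set $V=\pi(U)$, an open neighborhood of $[x]$. Then $\pi^{-1}(V)=\bigcup_{g\in G}gU$, and these sets are pairwise disjoint, since $gU\cap g'U=g\bigl(U\cap(g^{-1}g')U\bigr)=\emptyset$ whenever $g\neq g'$. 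Each $gU$ is open, and $\pi|_{gU}\colon gU\to V$ is a continuous open surjection; it is injective because $\pi(g\cdot y)=\pi(g\cdot y')$ with $y,y'\in U$ forces $g\cdot y'=h\cdot(g\cdot y)$ for some $h\in G$, i.e.\ $y'=(g^{-1}hg)\cdot y\in U\cap(g^{-1}hg)U$, so $g^{-1}hg=e_G$ and $y=y'$. Hence each $\pi|_{gU}$ is a homeomorphism, and $\pi$ is a covering map in the sense of the Preliminaries.

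For (2) I would first observe that proper discontinuity makes the action free: if $g\cdot x=x$ then $x\in U(x)\cap gU(x)$, forcing $g=e_G$. Consequently the map $\rho\colon G\to G(\pi)$, $\rho(g)=(x\mapsto g\cdot x)$, is a well-defined injective homomorphism (well-defined since $\pi(g\cdot x)=\pi(x)$, a homomorphism by the action axioms, and injective by freeness). To see $\rho$ is onto, fix $b\in X$ and take $\alpha\in G(\pi)$; since $\pi(\alpha(b))=\pi(b)$ and the fibers of $\pi$ are exactly the $G$-orbits, there is $g\in G$ with $\alpha(b)=g\cdot b=\rho(g)(b)$. As $X$ is simply connected, and in particular connected, two covering transformations of the connected covering $\pi$ that agree at a point coincide, so $\alpha=\rho(g)$. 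Thus $\rho$ is an isomorphism $G\cong G(\pi)$. Finally, since $X$ is simply connected, $\pi\colon X\to X/G$ is a universal covering, so Theorem~\ref{[Ao-Hi] Theorem 6.3.4}(2) supplies an isomorphism $G(\pi)\cong\pi_1(X/G,\pi(b))$; composing with $\rho$ gives $\pi_1(X/G)\cong G$.

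The only genuinely delicate point is the uniqueness of covering transformations agreeing at a single point, which relies on the unique-lifting property of covering maps over a connected base; I would invoke the standard lifting lemma for this. Everything else is bookkeeping with the quotient topology and with the orbit structure of the fibers of $\pi$.
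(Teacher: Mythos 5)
The paper does not prove this statement; it is imported verbatim from Aoki--Hiraide (Theorem 6.3.7) with no in-text argument, so there is no in-paper proof to compare against. Judged on its own, your argument is correct and complete: part (1) is the standard evenly-covered-neighborhood argument starting from the observation that $\pi$ is an open map (because $\pi^{-1}(\pi(W))=\bigcup_{g}gW$), with proper discontinuity supplying both the disjointness of the sheets $gU$ and the injectivity of $\pi|_{gU}$; for part (2) you correctly note that proper discontinuity forces the action to be free, so $g\mapsto(x\mapsto g\cdot x)$ embeds $G$ into $G(\pi)$, surjectivity follows from the unique-lifting property on a connected total space, and the isomorphism $G(\pi)\cong\pi_1(X/G)$ is exactly Theorem~\ref{[Ao-Hi] Theorem 6.3.4}(2) applied to the universal covering $\pi$. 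Reducing (2) to Theorem~\ref{[Ao-Hi] Theorem 6.3.4} is a legitimate and economical route given that both are quoted in the paper; a self-contained proof (as in the source text) would instead construct the isomorphism $G\to\pi_1(X/G)$ directly by lifting loops, but that buys nothing here. The one hypothesis you use implicitly, that ``simply connected'' includes path-connectedness so that unique lifting applies, is the standard convention and is certainly in force in the paper's applications (where $X=N$ is a connected, simply connected Lie group).
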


\begin{cor}\label{isomorphismGamma}
Let $N/\Gamma$ be an infra-nil-manifold and $\pi:N\rightarrow N/\Gamma$ be the natural projection. Then
\begin{equation*}
\Gamma\cong\pi_1(N/\Gamma)\cong G(\pi).
\end{equation*}
\end{cor}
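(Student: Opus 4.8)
The plan is to read off the two isomorphisms directly from the covering-space theorems recalled above, once the hypotheses are checked. Recall from the construction of an infra-nil-manifold that $N$ is a connected and simply connected nilpotent Lie group and that $\Gamma$, viewed as a subgroup of $Aff(N)\subseteq Diff(N)$, acts \emph{freely and properly discontinuously} on $N$; in particular each $\gamma\in\Gamma$ acts as a homeomorphism of $N$, so this is a properly discontinuous action in the sense of the definitions above, with orbit space $N/\Gamma$ and orbit-space projection $\pi:N\to N/\Gamma$.

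First I would establish $\pi_1(N/\Gamma)\cong\Gamma$. Since the $\Gamma$-action on $N$ is properly discontinuous, Theorem \ref{[Ao-Hi] Theorem 6.3.7}(1) gives that $\pi:N\to N/\Gamma$ is a covering map; because $N$ is connected and simply connected, $\pi$ is then the universal covering of $N/\Gamma$. Now Theorem \ref{[Ao-Hi] Theorem 6.3.7}(2), applied with $X=N$ and $G=\Gamma$, yields $\pi_1(N/\Gamma)\cong\Gamma$.

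Next I would establish $G(\pi)\cong\pi_1(N/\Gamma)$. Having identified $\pi:N\to N/\Gamma$ as the universal covering, I apply Theorem \ref{[Ao-Hi] Theorem 6.3.4}(2): for a base point $b\in N$ the map $\psi_b:G(\pi)\to\pi_1(N/\Gamma,\pi(b))$, $\alpha\mapsto[\pi\circ u_{\alpha(b)}]$, is an isomorphism. Composing the two isomorphisms gives $\Gamma\cong\pi_1(N/\Gamma)\cong G(\pi)$, as claimed. If an explicit isomorphism is wanted, the composite $\Gamma\to G(\pi)$ is just $\gamma\mapsto(x\mapsto\gamma\cdot x)$: it lands in $G(\pi)$ since $\pi(\gamma\cdot x)=\pi(x)$, it is injective by freeness of the action, and it is surjective because by Theorem \ref{[Ao-Hi] Theorem 6.3.4}(1) one has $\pi^{-1}(\pi(b))=\Gamma\cdot b$ and a covering transformation is determined by its value at the single point $b$.

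I do not expect a genuine obstacle: the whole statement is a formal consequence of the two cited theorems, and the only thing that really needs to be said is that the $\Gamma$-action on $N$ is a properly discontinuous action by homeomorphisms --- which is exactly the property, recalled above from Dekimpe, that makes $N/\Gamma$ a manifold in the first place. The one point to keep straight is the choice of base point, so that the fundamental groups appearing in Theorem \ref{[Ao-Hi] Theorem 6.3.4}(2) and Theorem \ref{[Ao-Hi] Theorem 6.3.7}(2) are the same one; since $N/\Gamma$ is path connected this is harmless and the base point may be suppressed, as in the remark above.
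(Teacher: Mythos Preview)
Your proposal is correct and follows essentially the same approach as the paper: both invoke Theorem~\ref{[Ao-Hi] Theorem 6.3.7} (properly discontinuous action on a simply connected space) to get $\Gamma\cong\pi_1(N/\Gamma)$, and Theorem~\ref{[Ao-Hi] Theorem 6.3.4} (deck transformations of the universal cover) to get $G(\pi)\cong\pi_1(N/\Gamma)$. Your write-up is somewhat more detailed than the paper's (you spell out the explicit isomorphism $\gamma\mapsto(x\mapsto\gamma\cdot x)$ and address the base-point issue), but the underlying argument is the same.
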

\begin{proof}
Since $\Gamma$ acts on $N$ properly discontinuous, the natural projection $\pi:N\rightarrow N/\Gamma$ is a covering map. Since $N$ is simply connected, by Theorem \ref{[Ao-Hi] Theorem 6.3.7} we have $\Gamma\cong\pi_1(N/\Gamma)$.

On the other hand, since $N$ is simply connected and $\Gamma$ acts on $N$ properly discontinuous the natural projection $\pi:N\rightarrow N/\Gamma$ is the universal covering map. So by Theorem \ref{[Ao-Hi] Theorem 6.3.4} we have $\Gamma\cong G(\pi)$.
\end{proof}

From now on we only consider $N/\Gamma$ as a nil-manifold.

\begin{lemma}\label{fbar=Abar on gamma}
Let $f:N/\Gamma \rightarrow N/\Gamma$ be a continuous map of a nil-manifold, and $A:N/\Gamma \rightarrow N/\Gamma$ be the unique nil-endomorphism homotopic to $f$, then $\overline{f}_*=\overline{A}_*:\Gamma\rightarrow\Gamma$.
\end{lemma}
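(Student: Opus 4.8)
The plan is to work on the universal covering $\pi:N\to N/\Gamma$ and to follow the endomorphism induced on the deck transformation group $G(\pi)\cong\Gamma$ along a lift of the homotopy connecting $f$ to $A$. Throughout, a \emph{lift} of a continuous map $h:N/\Gamma\to N/\Gamma$ means a map $\overline{h}:N\to N$ with $\pi\circ\overline{h}=h\circ\pi$.

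First I would fix a homotopy $F:N/\Gamma\times[0,1]\to N/\Gamma$ with $F_0=f$ and $F_1=A$, and fix the canonical lift $\overline{A}\in Aut(N)$ of $A$ (so $\pi\circ\overline{A}=A\circ\pi$). Applying the homotopy lifting property of the covering $\pi$ to the homotopy $t\mapsto F_t\circ\pi:N\to N/\Gamma$, starting from the lift $\overline{A}$ of $F_1\circ\pi=A\circ\pi$ and lifting down to $t=0$, I obtain a continuous map $\overline{F}:N\times[0,1]\to N$ with $\pi\circ\overline{F}_t=F_t\circ\pi$ for every $t$ and $\overline{F}_1=\overline{A}$, where $\overline{F}_t:=\overline{F}(\cdot,t)$. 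Then $\overline{f}:=\overline{F}_0$ is a lift of $f$, homotopic to $\overline{A}$ through lifts. (This singles out the lift of $f$ matched to the linear lift $\overline{A}$; a different lift of $f$ would only give an endomorphism of $\Gamma$ conjugate to $\overline{A}_*$, which is why the identity is to be read for this choice.)

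Next I would use the elementary fact that any lift $g:N\to N$ of a continuous self-map of $N/\Gamma$ induces an endomorphism $g_*$ of $G(\pi)$ via $g\circ\alpha=g_*(\alpha)\circ g$ for $\alpha\in G(\pi)$: indeed $g\circ\alpha$ is again a lift of the same map, so it differs from $g$ by a unique element of $G(\pi)$, since $N$ is connected and $G(\pi)$ acts transitively on the fibres of the universal covering (Theorem~\ref{[Ao-Hi] Theorem 6.3.4}); the homomorphism property then follows from associativity of composition. Applying this to each $\overline{F}_t$ yields, for a fixed $\alpha\in G(\pi)$, a map $t\mapsto(\overline{F}_t)_*(\alpha)\in G(\pi)$, and I claim it is locally constant. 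Fix $x_0\in N$; then $(\overline{F}_t)_*(\alpha)$ is the unique $\delta\in G(\pi)$ with $\delta\bigl(\overline{F}_t(x_0)\bigr)=\overline{F}_t(\alpha(x_0))$, the two points $\overline{F}_t(x_0)$ and $\overline{F}_t(\alpha(x_0))$ depend continuously on $t$, and the action of $G(\pi)$ on $N$ is properly discontinuous (Theorem~\ref{[Ao-Hi] Theorem 6.3.4}), so a short argument with a distinguishing neighbourhood shows $\delta$ cannot jump. Since $[0,1]$ is connected, $(\overline{F}_0)_*=(\overline{F}_1)_*$, that is $\overline{f}_*=\overline{A}_*$ on $G(\pi)$; transporting along the isomorphism $G(\pi)\cong\Gamma$ of Corollary~\ref{isomorphismGamma} gives $\overline{f}_*=\overline{A}_*:\Gamma\to\Gamma$.

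The real content here is bookkeeping rather than a deep obstruction: one must arrange the lifts of $F$, of $f$ and of $A$ compatibly so that the lifted homotopy genuinely ends at the automorphism $\overline{A}$ and not at some other lift of $A$, and one must verify that the ``induced endomorphism of the deck group'' is legitimately defined at every intermediate time $t$ — which is precisely where connectedness of $N$ and regularity (normality) of $\pi$ are used. An alternative route would avoid lifting the homotopy: apply the relation $g_*=\overline{w}_*\circ f_*$ for homotopic maps with $w(t)=F(x_0,t)$, combine it with Lemma~\ref{Ao Remark 6.7.9}, and identify $f_*$ and $A_*$ on $\pi_1(N/\Gamma)\cong\Gamma$ using the base-point isomorphisms $\psi_b$ of Theorem~\ref{[Ao-Hi] Theorem 6.3.4}; this is equivalent but forces one to carry the change-of-base-point path through all the identifications.
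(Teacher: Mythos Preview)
Your argument is correct, and it takes a genuinely different route from the paper. The paper's proof is a three-line application of Lemma~\ref{Ao Remark 6.7.9}: it observes that $A([e])=[e]$ (since $\overline{A}(e)=e$), asserts that $f([e])=\pi(\overline{f}(e))=\pi(e)=[e]$ as well, and then invokes Lemma~\ref{Ao Remark 6.7.9} to conclude $f_*=A_*$ on $\pi_1(N/\Gamma,[e])\cong\Gamma$. This is exactly the ``alternative route'' you sketch in your last paragraph. Note, however, that the step $\pi(\overline{f}(e))=\pi(e)$ tacitly assumes the lift $\overline{f}$ satisfies $\overline{f}(e)\in\Gamma$ (in fact the paper later normalises to $\overline{f}(e)=e$ via Lemma~\ref{fixedpoint}, which needs $f$ to be a TA-covering map, a hypothesis not present in the lemma as stated). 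Your approach---lifting the homotopy from $\overline{A}$ and using proper discontinuity of the deck action to see that $(\overline{F}_t)_*$ is locally constant---is longer but handles the choice of lift explicitly and works for an arbitrary continuous $f$, without needing a common fixed point or the TA hypothesis. What the paper's route buys is brevity once the basepoint issue is settled; what yours buys is that the basepoint issue never arises.
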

\begin{proof}
By corollary \ref{isomorphismGamma}, $\overline{f}_*$ and $\overline{A}_*$ are two maps on $\Gamma$. For $[e]=\{x\in N:\gamma(x)=\gamma.x=e\text{ for some }\gamma\in\Gamma\}$, we have
\begin{equation*}
f([e])=f\circ \pi (e)=\pi\circ \overline{f}(e)=\pi(\overline{f}(e))=\pi(e)=[e]=A([e]).
\end{equation*}
So according to lemma \ref{Ao Remark 6.7.9}, $\overline{f}_*=\overline{A}_*$.
\end{proof}

\begin{lemma}[\cite{[Su 2]} Lemma 1.3]\label{Su 2 Lemma 1.3}
Let $f:N/\Gamma \rightarrow N/\Gamma$ be a self-covering map of a nil-manifold and $A:N/\Gamma\rightarrow N/\Gamma$ denote the nil-endomorphism homotopic to $f$. If $f$ is a TA-covering map, then $A$ is hyperbolic.
\end{lemma}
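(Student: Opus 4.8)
The plan is to lift everything to the universal cover and compare $f$ with its linearisation there. Let $\pi:N\to N/\Gamma$ be the universal covering and let $\overline f:N\to N$ be a lift of $f$. By Corollary \ref{isomorphismGamma} and Lemma \ref{fbar=Abar on gamma} we have $\overline f_*=\overline A_*$ on $\Gamma\cong G(\pi)$, which, after identifying $\Gamma$ with the deck group acting by left translations, is the equivariance $\overline f(\gamma x)=\overline A(\gamma)\,\overline f(x)$ for all $\gamma\in\Gamma$, $x\in N$. Hence $\overline g:=\overline A^{-1}\circ\overline f$ satisfies $\overline g(\gamma x)=\gamma\,\overline g(x)$, so the displacement $x\mapsto x^{-1}\overline g(x)$ is $\Gamma$-invariant and therefore takes values in a fixed compact subset of $N$; equivalently, fixing a left-invariant metric $d$ on $N$, one gets $\sup_{x\in N}d\bigl(\overline f(x),\overline A(x)\bigr)=:K<\infty$. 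This is the nilpotent analogue of ``$\overline f-A$ is bounded''.

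Next I would transfer the topological-Anosov properties of $f$ to $\overline f$. Since $f$ is a self-covering map, $\overline f$ is a proper local homeomorphism; using $c$-expansiveness of $f$ one obtains a constant $e>0$ such that any two orbits $(x_i),(y_i)\in N_{\overline f}$ with $d(x_i,y_i)\le e$ for all $i\in\mathbb Z$ coincide, and POTP of $f$ lifts to the statement that every pseudo-orbit of $\overline f$ with uniformly small jumps, confined to a bounded region, is traced by a nearby point. The delicate point is ruling out the deck ambiguity: projecting a pair of $e$-close $\overline f$-orbits down gives equal orbits in $N/\Gamma$ by $c$-expansiveness of $f$, hence $y_i=\gamma_i x_i$ with $\gamma_i\in\Gamma$, and $e$-closeness together with discreteness of $\Gamma$ forces the $\gamma_i$ to be a single $\gamma$ with $d(x_0,\gamma x_0)\le e$, which is excluded once $e$ is small relative to the injectivity radius of $N/\Gamma$.

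Now argue by contradiction: suppose $\overline A$ is not hyperbolic, so the Lie-algebra automorphism $d\overline A$ on $\mathfrak n=\mathrm{Lie}(N)$ has an eigenvalue of modulus $1$. Let $\mathfrak c\subseteq\mathfrak n$ be the $d\overline A$-invariant real subspace spanned by the generalised eigenspaces for eigenvalues of modulus $1$; then $\mathfrak c\neq\{0\}$ and $\|(d\overline A)^{n}|_{\mathfrak c}\|\le C(1+|n|)^{r}$ for all $n\in\mathbb Z$. Thus on $\exp(\mathfrak c)$-cosets $\overline A$ behaves essentially isometrically, so it carries pairs of distinct points whose entire $\overline A$-orbits stay a bounded distance apart. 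By the first paragraph such an $\overline A$-orbit is a pseudo-orbit of $\overline f$ with jumps $\le K$; after replacing $f$ by a high iterate and rescaling so that the jumps become small relative to the ambient scale, the second paragraph provides genuine $\overline f$-orbits shadowing them, and projecting to the compact quotient yields two distinct orbits of $f$ in the inverse limit that remain within $e$ of each other for all times — contradicting $c$-expansiveness of $f$. Hence $\overline A$ is hyperbolic.

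An alternative route, probably cleaner in view of the available results, is induction on the nilpotency class of $N$ with Theorem \ref{Aoki Theorem 6.8.1} as the base case: let $Z$ be the last nontrivial term of the lower central series of $N$, $\Gamma'=\Gamma/(Z\cap\Gamma)$, $N'=N/Z$, and $q:N/\Gamma\to N'/\Gamma'$ the quotient nil-manifold of lower class. Since $\overline f_*=\overline A_*$ preserves $Z\cap\Gamma$, $f$ descends to a self-covering map $f'$ homotopic to the nil-endomorphism $A'$, and over a fixed point of $f'$ the restriction of $f$ to the central subtorus $T=Z/(Z\cap\Gamma)$ is a self-covering map of a torus homotopic to the linear part of $\overline A$ on $Z$; applying Theorem \ref{Aoki Theorem 6.8.1} together with the inductive hypothesis makes $d\overline A$ hyperbolic both on $\mathrm{Lie}(Z)$ and on $\mathfrak n/\mathrm{Lie}(Z)$, hence on $\mathfrak n$. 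Either way, the main obstacle I anticipate is the passage of the TA property through covers, quotients and invariant subtori — lifting $c$-expansiveness and POTP to $\overline f$ without spurious deck solutions, and, in the inductive route, showing that the TA property survives projection to the central-torus factor and restriction to an invariant fibre — and, within the direct argument, the presence of center eigenvalues that are irrational rotations rather than roots of unity, which is exactly the technically heaviest part of Aoki and Hiraide's torus argument.
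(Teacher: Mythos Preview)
The paper does not give its own proof of this lemma; it is quoted verbatim from Sumi \cite{[Su 2]} (Lemma~1.3 there) and used as an input. So there is nothing in the present paper to compare your argument against beyond the citation.

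On the content of your proposal: the first approach is the right strategy and is essentially how the torus case (Theorem~\ref{Aoki Theorem 6.8.1}) and Sumi's nil-manifold generalisation are proved---lift to $N$, use $\overline f_*=\overline A_*$ to get $\sup_x d(\overline f(x),\overline A(x))<\infty$, transfer $c$-expansiveness and POTP to $\overline f$, and then derive a contradiction from a nontrivial centre subspace of $d\overline A$. You correctly flag the hard step (irrational centre eigenvalues and the careful shadowing estimate), which in Aoki--Hiraide occupies several pages and in Sumi is redone in the nilpotent setting using the exponential coordinates and the fact that $\overline L^\sigma(e)=\exp(E^\sigma_e)$.

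Your second, inductive route has a genuine gap as written. The quotient map $q:N/\Gamma\to N'/\Gamma'$ is a fibration with torus fibre $T=Z/(Z\cap\Gamma)$, and $A$ preserves this fibration because $\overline A$ is a Lie-group automorphism; but $f$ is merely homotopic to $A$ and need not map fibres to fibres. Hence ``the restriction of $f$ to the central subtorus over a fixed point of $f'$'' is not in general a well-defined self-map of $T$, so you cannot directly invoke Theorem~\ref{Aoki Theorem 6.8.1} on that fibre. Even the passage of the TA property to the quotient $f'$ (which you do flag) is nontrivial: POTP descends easily, but $c$-expansiveness of $f$ does not automatically imply $c$-expansiveness of $f'$, since distinct $f'$-orbits may be shadowed by a single $f$-orbit only up to a fibre ambiguity. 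If you want to salvage the induction you would need first to construct the semiconjugacy $\overline h$ (Lemma~\ref{Semiconjugacy}) and argue with $\overline A$ rather than $f$ on the fibres---but Lemma~\ref{Semiconjugacy} in this paper already \emph{uses} hyperbolicity of $A$, so the logic would be circular. Stick with the first approach.
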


\begin{lemma}[\cite{[Su 2]} Lemma 1.5]\label{fixedpoint}
Let $f:N/\Gamma\rightarrow N/\Gamma$ be a self-covering map and let $\overline{f}$ : $N\rightarrow N$ be a lift of $f$ by the natural projection $\pi:N\rightarrow N/\Gamma$. If $f$ is a TA-covering map then $\overline{f}$ has exactly one fixed point.
\end{lemma}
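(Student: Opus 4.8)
\emph{Plan.} The idea is to pass to the universal cover $N$, recognize that $\overline{f}$ lies at bounded distance from a uniformly hyperbolic affine model, and then run a Franks--Manning type shadowing argument. By Lemma~\ref{Su 2 Lemma 1.3}, the nil-endomorphism $A$ homotopic to $f$ is hyperbolic, so its linear part $\overline{A}\in\mathrm{Aut}(N)$ is a hyperbolic automorphism; in particular $\overline{A}$ is invertible, and since $\overline{A}\circ\exp=\exp\circ D\overline{A}_e$, the operator $D\overline{A}_e$ on $\mathrm{Lie}(N)$ has no eigenvalue of modulus one, giving a splitting $\mathrm{Lie}(N)=\mathfrak{n}^{s}\oplus\mathfrak{n}^{u}$. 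Fix a left-invariant metric $d$ on $N$; then the derivative of any affine map $x\mapsto\gamma\,\overline{A}(x)$ is, in left-invariant frames, the constant operator $D\overline{A}_e$, so each such map is uniformly hyperbolic and therefore has the shadowing property (for pseudo-orbits of arbitrary size), unique shadowing, and a unique fixed point. Since $f$ is a self-covering map, its lift $\overline{f}\colon N\to N$ through $\pi\colon N\to N/\Gamma$ is the lift of a covering map to a simply connected space, hence a homeomorphism. Lifting a homotopy from $f$ to $A$ and using compactness of $N/\Gamma$ produces a constant $K>0$ and a lift $\overline{A}_0$ of $A$ --- necessarily of the affine form $\overline{A}_0(x)=\gamma_0\,\overline{A}(x)$ with $\gamma_0\in\Gamma$ --- such that $d(\overline{f}(x),\overline{A}_0(x))\le K$ for all $x\in N$; let $x_*$ denote the unique fixed point of $\overline{A}_0$.

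\emph{Construction of the semiconjugacy.} For each $x\in N$ the bi-infinite $\overline{f}$-orbit $(\overline{f}^{\,n}x)_{n\in\mathbb{Z}}$ is a $K$-pseudo-orbit of $\overline{A}_0$, because $d(\overline{A}_0(\overline{f}^{\,n}x),\overline{f}^{\,n+1}x)=d(\overline{A}_0(\overline{f}^{\,n}x),\overline{f}(\overline{f}^{\,n}x))\le K$. By uniform hyperbolicity of $\overline{A}_0$ there is a unique $h(x)\in N$ whose $\overline{A}_0$-orbit stays within a fixed distance $K'$ of $(\overline{f}^{\,n}x)_n$; this $h\colon N\to N$ is continuous, satisfies $d(h(x),x)\le K'$ and, by uniqueness of shadowing, $h\circ\overline{f}=\overline{A}_0\circ h$. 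Being a proper map homotopic to the identity through bounded maps, $h$ is surjective. If $\overline{f}(x)=x$ then $\overline{A}_0(h(x))=h(\overline{f}(x))=h(x)$, so $h(x)=x_*$; hence $\mathrm{Fix}(\overline{f})\subseteq h^{-1}(x_*)$. Since $\overline{A}_0$ fixes $x_*$, one has $h^{-1}(x_*)=\{x:\overline{f}^{\,n}x\in\overline{B}_{K'}(x_*)\ \forall n\in\mathbb{Z}\}$, a compact set (contained in $\overline{B}_{K'}(x_*)$), nonempty by surjectivity of $h$, and carried into itself by $\overline{f}$; in fact $h^{-1}(x_*)$ is exactly the set of $x$ whose $\overline{f}$-orbit is bounded, since a bounded $\overline{f}$-orbit forces its $\overline{A}_0$-shadow to be bounded and hence equal to the fixed orbit $x_*$.

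\emph{Reduction.} It is therefore enough to show that $h^{-1}(x_*)$ is a single point: that point is fixed by $\overline{f}$ (existence) and it contains $\mathrm{Fix}(\overline{f})$ (uniqueness). Equivalently: if $x,y\in N$ satisfy $d(\overline{f}^{\,n}x,\overline{f}^{\,n}y)\le 2K'$ for all $n\in\mathbb{Z}$, then $x=y$. To prove this I would use that $\overline{f}$ is an expansive homeomorphism of $N$ with POTP --- inherited from the $c$-expansiveness and POTP of the $TA$-map $f$ together with the proper discontinuity of the $\Gamma$-action (a bi-infinite $\overline{f}$-orbit projects to a point of the inverse limit $(N/\Gamma)_f$, and the $\pi$-fibres are $\Gamma$-orbits). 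For a $TA$-homeomorphism there is $\epsilon_0>0$ such that orbits staying $\epsilon_0$-close in forward (resp. backward) time lie on a common stable (resp. unstable) set, and $W^{s}(x)\cap W^{u}(x)=\{x\}$ by expansiveness. One passes from the bound $2K'$ to $\epsilon_0$ by the usual ladder argument: join $\overline{f}^{\,n}x$ to $\overline{f}^{\,n}y$ by a short path in $N$ for each $n$, discretize it to obtain honest $\delta$-pseudo-orbits of $\overline{f}$ running transversally between the two orbits, trace them by POTP, and iterate the local statements along the resulting chain to place $y$ in $W^{s}(x)\cap W^{u}(x)=\{x\}$.

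\emph{Main obstacle.} The crux is this last injectivity step, which is also the only point where the hypothesis that $f$ is a $TA$-map --- rather than merely homotopic to the hyperbolic $A$ --- actually enters. Over a torus it is classical, but over a general nil-manifold the nonabelian $\Gamma$ complicates matters: one must verify that $\overline{f}$ is genuinely expansive on $N$ with a usable constant, and that POTP lifts, while the $\Gamma$-translations, though $d$-isometries, interact nontrivially with conjugation and the points $\overline{f}^{\,n}x$ may escape every compact set. Concretely, ruling out $\gamma\neq e$ when a bi-infinite $\overline{f}$-orbit is $e$-traced by a $\Gamma$-translate of itself amounts to controlling $|z^{-1}\overline{A}^{\,n}(\gamma)z|$ uniformly in $z$ using the hyperbolicity of $\overline{A}$. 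Everything else --- hyperbolicity of $A$, the bounded-distance estimate, the uniform hyperbolicity and unique fixed point of $\overline{A}_0$, and the construction of $h$ --- is routine once the splitting of $\mathrm{Lie}(N)$ is in hand.
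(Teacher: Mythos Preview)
The paper does not prove this lemma; it is quoted verbatim from \cite{[Su 2]}, Lemma~1.5, with no argument supplied. So there is nothing in the paper to compare against, and the question is whether your sketch stands on its own.

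Your outline is the natural Franks--Manning scheme and is sound through the reduction step: hyperbolicity of $A$ from Lemma~\ref{Su 2 Lemma 1.3}, the bounded-distance estimate $d(\overline{f},\overline{A}_0)\le K$, the shadowing construction of $h$ with $h\circ\overline{f}=\overline{A}_0\circ h$ and $d(h,\mathrm{id})\le K'$, and the reduction of both existence and uniqueness to $h^{-1}(x_*)=\{\text{pt}\}$ are all correct. The gap is exactly where you place it, but the ``ladder argument'' you propose does not close it. Joining $\overline{f}^{\,n}x$ to $\overline{f}^{\,n}y$ by a short path for each $n$ and discretizing does \emph{not} produce $\delta$-pseudo-orbits of $\overline{f}$: the intermediate points $w_i^n$ on the $n$-th rung satisfy no relation $d(\overline{f}(w_i^n),w_i^{n+1})<\delta$, because the rungs are chosen independently. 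On a torus one can interpolate linearly and coherently in $n$; on a nonabelian $N$ there is no such interpolation, which is precisely the obstruction you note with $z^{-1}\overline{A}^{\,n}(\gamma)z$. Proving that bi-infinite $\overline{f}$-orbits staying $L$-close for \emph{arbitrary} $L$ must coincide is tantamount to injectivity of $h$, and that is a substantial theorem (cf.\ the last clause of Lemma~\ref{Semiconjugacy}, proved in \cite{[Ao-Hi]} Proposition~8.4.2) which logically comes \emph{after} Lemma~\ref{fixedpoint}, not before.

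A route that avoids global injectivity of $h$ --- and is closer in spirit to how this is actually done --- is via fixed-point index. Expansiveness of $\overline{f}$ with the \emph{small} constant $e$ does lift (your parenthetical is right: $c$-expansiveness of $f$ plus proper discontinuity of $\Gamma$), and it forces fixed points of $\overline{f}$ to be isolated; since $\mathrm{Fix}(\overline{f})\subseteq\overline{B}_{K'}(x_*)$ they are finitely many. The bounded homotopy from $\overline{f}$ to $\overline{A}_0$ keeps all fixed points in a fixed ball, so the total index equals $\mathrm{ind}(\overline{A}_0,x_*)=\mathrm{sign}\det(I-D_e\overline{A})=(-1)^{\dim E^u}\ne 0$. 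Finally, the TA structure gives each fixed point of $\overline{f}$ a local product neighbourhood on which $\overline{f}$ is topologically a standard saddle of the \emph{same} index $(-1)^{\dim E^u}$ (the dimension is constant since $\Omega(f)=N/\Gamma$ is connected). Equal nonzero contributions summing to $\pm 1$ forces exactly one fixed point. This uses only the small-constant expansiveness you already have and sidesteps the large-$L$ problem entirely.
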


For continuous maps $f$ and $g$ of $N$ we define $D(f, g)= \sup\{D(f(x), g(x)) : x\in N\}$ where $D$ denotes a left invariant, $\Gamma$-invariant Riemannian distance for $N$ . Notice that $D(f, g)$ is not necessary finite.

Suppose that $f$ : $N/\Gamma\rightarrow N/\Gamma$ is a TA-covering map. Let $A:N/\Gamma\rightarrow N/\Gamma$ be the nil-endomorphism homotopic to $f$, and let $\overline{A}$ : $N \rightarrow N$ be the automorphism which is a lift of $A$ by the natural projection $\pi$. Since $D_e\overline{A}$ is hyperbolic by Lemma \ref{Su 2 Lemma 1.3}, the Lie algebra $Lie(N)$ of $N$ splits into the direct sum $Lie(N)=E_e^s\oplus E_e^u$ of subspaces $E_e^s$ and $E_e^u$ such that $D_e\overline{A}(E_e^s)=E_e^s$, $D_e\overline{A}(E_e^u)=E_e^u$ and there are $c>1,0<\lambda<1$ so that for all $n\geq 0$
\begin{align*}
&||D_e\overline{A}^n(v)||\leq c\lambda^n||v|| \quad (v\in E_e^s), \\
&||D_e\overline{A}^{-n}(v)||\leq c\lambda^n||v|| \quad (v\in E_e^u),
\end{align*}
where $||\cdot||$ is the Riemannian metric. Let $\overline{L}^{\sigma}(e)=\exp(E_e^{\sigma})\;(\sigma=s, u)$ and let
$\overline{L}^{\sigma}(x)=x\cdot\overline{L}^{\sigma}(e)(\sigma=s, u)$ for $x\in N$ . Since left translations are isometries under the metric $D$, it follows that for all $x\in N$
\begin{align*}
&\overline{L}^s(x)=\{y\in N : D(\overline{A}^i(x),\overline{A}^i(y))\rightarrow 0\;(i\rightarrow\infty)\},\\
&\overline{L}^u(x)=\{y\in N : D(\overline{A}^i(x),\overline{A}^i(y))\rightarrow 0\;(i\rightarrow -\infty)\} .
\end{align*}

\begin{lemma}[\cite{[Hi]} Lemma 3.2]
For $x,y\in N$, $L^s(X)\cap{L^u(y)}$ consists of exactly one point.
\end{lemma}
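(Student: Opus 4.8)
The plan is to reduce the statement to a structural fact about $N$, namely that the multiplication map $\mu\colon\overline{L}^s(e)\times\overline{L}^u(e)\to N$, $(s,u)\mapsto su$, is a bijection, and then transport this by left translation. As preliminaries I would first verify that $E_e^s$ and $E_e^u$ are Lie subalgebras of $Lie(N)$: for $v,w\in E_e^s$ write $[v,w]=a+b$ with $a\in E_e^s$, $b\in E_e^u$; applying $D_e\overline{A}^n[v,w]=[D_e\overline{A}^nv,D_e\overline{A}^nw]$ and the stated norm estimates, the component $b$ would satisfy $\|D_e\overline{A}^nb\|\geq c^{-1}\lambda^{-n}\|b\|$ while the whole expression is $O(\lambda^{2n})$, so $b=0$; the case $v,w\in E_e^u$ is symmetric ($n\to-\infty$). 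Since $N$ is connected, simply connected and nilpotent, $\exp\colon Lie(N)\to N$ is a diffeomorphism, hence $\overline{L}^s(e)=\exp(E_e^s)$ and $\overline{L}^u(e)=\exp(E_e^u)$ are closed connected (simply connected) subgroups, and injectivity of $\exp$ yields $\exp(\mathfrak a)\cap\exp(\mathfrak b)=\exp(\mathfrak a\cap\mathfrak b)$ for subspaces; in particular $\overline{L}^s(e)\cap\overline{L}^u(e)=\{e\}$ since $E_e^s\cap E_e^u=\{0\}$. The dynamical descriptions of $\overline{L}^s(x)$ and $\overline{L}^u(x)$ recalled above are only needed to identify these sets with the cosets $x\overline{L}^s(e)$ and $x\overline{L}^u(e)$.

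With this, uniqueness is formal: if $z,z'\in\overline{L}^s(x)\cap\overline{L}^u(y)=x\overline{L}^s(e)\cap y\overline{L}^u(e)$, then writing $z=xs$, $z'=xs'$ gives $z^{-1}z'=s^{-1}s'\in\overline{L}^s(e)$, and writing $z=yu$, $z'=yu'$ gives $z^{-1}z'=u^{-1}u'\in\overline{L}^u(e)$, so $z^{-1}z'\in\overline{L}^s(e)\cap\overline{L}^u(e)=\{e\}$ and $z=z'$. For existence it suffices to establish $N=\overline{L}^u(e)\cdot\overline{L}^s(e)$: given $x,y\in N$, write $y^{-1}x=ut$ with $u\in\overline{L}^u(e)$, $t\in\overline{L}^s(e)$, and set $z=xt^{-1}$; then $z\in x\overline{L}^s(e)=\overline{L}^s(x)$ and $yu=y(y^{-1}xt^{-1})=xt^{-1}=z$, so $z\in y\overline{L}^u(e)=\overline{L}^u(y)$.

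The remaining and only substantive point is the surjectivity $N=\overline{L}^u(e)\cdot\overline{L}^s(e)$, which is exactly where nilpotency is used — for a general Lie group the product of two complementary subalgebras need not fill the group. I would argue by induction on $\dim N$. The center $\mathfrak z$ of $Lie(N)$ is nonzero and characteristic, hence $D_e\overline{A}$-invariant, so it splits as $(\mathfrak z\cap E_e^s)\oplus(\mathfrak z\cap E_e^u)$; pick a $D_e\overline{A}$-invariant line $\mathfrak z_0$ inside $\mathfrak z\cap E_e^s$ or $\mathfrak z\cap E_e^u$, say $\mathfrak z_0\subseteq E_e^s$, and let $Z_0=\exp\mathfrak z_0$, a closed central subgroup with $\overline{A}(Z_0)=Z_0$ and $Z_0\subseteq\overline{L}^s(e)$. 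The quotient $N/Z_0$ is again connected, simply connected and nilpotent of smaller dimension, $\overline{A}$ descends to a hyperbolic automorphism whose stable and unstable subalgebras are the images of $E_e^s$ and $E_e^u$, and $\overline{L}^u(e)$ maps isomorphically onto its image because $Z_0\cap\overline{L}^u(e)=\exp(\mathfrak z_0\cap E_e^u)=\{e\}$. By induction every $g\in N$ has image of the form $\bar u\bar s$; lifting $\bar u$ to the unique $u\in\overline{L}^u(e)$ over it and $\bar s$ to some $s_1\in\overline{L}^s(e)$, one gets $g=us_1z$ for some $z\in Z_0$, and since $z$ is central and lies in the subgroup $\overline{L}^s(e)$ we may rewrite $g=u(s_1z)\in\overline{L}^u(e)\cdot\overline{L}^s(e)$ (in the other case $\mathfrak z_0\subseteq E_e^u$ the correction is absorbed into $\overline{L}^u(e)$ instead). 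Together with the first two paragraphs this proves the lemma. The part I expect to need the most care is the inductive step: checking that the hyperbolic splitting descends correctly to $N/Z_0$ and that $Z_0$ really sits inside the appropriate one of the two subgroups, so that the correction term $z$ can be absorbed.
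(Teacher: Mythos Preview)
The paper does not actually prove this lemma; it is quoted verbatim from \cite{[Hi]} (Lemma 3.2) and used as a black box to define the map $\beta(x,y)$. So there is no ``paper's proof'' to compare against.

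Your argument is essentially correct and is the standard structural proof: reduce to the global product decomposition $N=\overline{L}^u(e)\cdot\overline{L}^s(e)$ and establish the latter by induction on $\dim N$ using a central $\overline{A}$-invariant subgroup. The verification that $E_e^s$ and $E_e^u$ are subalgebras, the identification $\overline{L}^\sigma(e)\cap\overline{L}^\tau(e)=\{e\}$ via injectivity of $\exp$, and the coset manipulation giving uniqueness and existence are all fine.

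One small point to fix: you write ``pick a $D_e\overline{A}$-invariant \emph{line} $\mathfrak z_0$ inside $\mathfrak z\cap E_e^s$ or $\mathfrak z\cap E_e^u$''. Over $\mathbb{R}$ a hyperbolic linear map need not admit a one-dimensional invariant subspace (the restriction to $\mathfrak z\cap E_e^s$ could have only complex eigenvalues). This is harmless for your argument: you never use one-dimensionality, only that $\mathfrak z_0$ is a nonzero $D_e\overline{A}$-invariant central ideal contained entirely in one of $E_e^s$, $E_e^u$. Simply take $\mathfrak z_0=\mathfrak z\cap E_e^s$ if this is nonzero, and $\mathfrak z_0=\mathfrak z\cap E_e^u$ otherwise; at least one is nonzero since $\mathfrak z\neq 0$ and $\mathfrak z=(\mathfrak z\cap E_e^s)\oplus(\mathfrak z\cap E_e^u)$. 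The inductive step then goes through exactly as you wrote it.
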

For $x,y\in N$ denote as $\beta(x, y)$ the point in $L^s(X)\cap{L^u(y)}$ .
\begin{lemma}[\cite{[Hi]} Lemma 3.2]
(1) For $L>0$ and $\epsilon>0$ there exists $J>0$ such that for $x,y\in N$ if $D(\overline{A}^i(x),\overline{A}^i(y))\leq L$ for all $i$ with $|i|\leq J$ , then $D(x, y)\leq\epsilon$.\\
(2) For given $L>0$, if $D(\overline{A}^i(x),\overline{A}^i(y))\leq L$ for all $i\in \mathbb{Z}$ , then $x=y \;(x, y\in N)$.
\end{lemma}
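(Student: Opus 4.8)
The plan is to prove (2) first --- it is precisely the expansiveness of the hyperbolic automorphism $\overline{A}$ --- and then to deduce (1) from (2) by a compactness argument.

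For (2): given $x,y\in N$ with $D(\overline{A}^i(x),\overline{A}^i(y))\le L$ for all $i\in\mathbb{Z}$, I would introduce $z=\beta(x,y)$, the unique point of $\overline{L}^s(x)\cap\overline{L}^u(y)$ provided by the preceding lemma, and show $z=x=y$. To get $z=x$, write $z=x\exp(v)$ with $v\in E^s_e$ (possible since $\overline{L}^s(x)=x\cdot\exp(E^s_e)$). As $\overline{A}$ is a Lie group automorphism, $\overline{A}^i(z)=\overline{A}^i(x)\exp\big((D_e\overline{A})^i v\big)$, so left invariance of $D$ gives $D(\overline{A}^i(x),\overline{A}^i(z))=D\big(e,\exp((D_e\overline{A})^i v)\big)$. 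Since $z\in\overline{L}^u(y)$, $D(\overline{A}^i(y),\overline{A}^i(z))\to0$ as $i\to-\infty$, so by the triangle inequality and the hypothesis $D(\overline{A}^i(x),\overline{A}^i(z))\le L+1$ for all sufficiently negative $i$. But the contraction estimate on $E^s_e$ forces $\|(D_e\overline{A})^i v\|\ge c^{-1}\lambda^{i}\|v\|\to\infty$ as $i\to-\infty$ whenever $v\ne0$, and since $N$ is simply connected nilpotent, $\exp$ is a global diffeomorphism $Lie(N)\to N$ and the left invariant distance $D$ is complete, so closed $D$-balls are compact and $D(e,\exp(w))\to\infty$ as $\|w\|\to\infty$. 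Hence $v=0$, i.e. $z=x$. Then $y\in\overline{L}^u(x)=x\cdot\exp(E^u_e)$, say $y=x\exp(w)$ with $w\in E^u_e$; the same computation gives $D(e,\exp((D_e\overline{A})^i w))=D(\overline{A}^i(x),\overline{A}^i(y))\le L$ for all $i$, while the expansion estimate forces $\|(D_e\overline{A})^i w\|\to\infty$ as $i\to+\infty$ unless $w=0$. So $w=0$ and $x=y$.

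For (1): I would argue by contradiction. If (1) fails for some $L,\epsilon>0$, then for each $k$ there are $x_k,y_k\in N$ with $D(\overline{A}^i(x_k),\overline{A}^i(y_k))\le L$ for $|i|\le k$ and $D(x_k,y_k)>\epsilon$. Using left invariance of $D$ and that $\overline{A}$ is an automorphism, one may replace $(x_k,y_k)$ by $(e,\,x_k^{-1}y_k)$ without disturbing any of these relations, so assume $x_k=e$. Then $y_k$ lies in the compact set $\{w\in N:D(e,w)\le L\}$, so after passing to a subsequence $y_k\to y$; fixing $i$ and letting $k\to\infty$ gives $D(e,\overline{A}^i(y))\le L$ for every $i\in\mathbb{Z}$, hence $y=e$ by (2), contradicting $D(e,y)=\lim_k D(e,y_k)\ge\epsilon$.

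I expect the only non-routine point to be the ``properness'' input used in (2): that a uniform $D$-bound along an orbit yields a bound on the $Lie(N)$-coordinate of the displacement. This rests on completeness of left invariant Riemannian metrics (via Hopf--Rinow) together with $\exp$ being a diffeomorphism for a simply connected nilpotent $N$; everything else is the standard hyperbolic dichotomy, already encoded in the leaves $\overline{L}^{\sigma}$ and in the point $\beta(x,y)$.
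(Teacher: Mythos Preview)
The paper does not supply its own proof of this lemma; it is quoted from \cite{[Hi]} without argument. Your proof is correct and follows the natural route one would take from the surrounding setup: for (2) you exploit the canonical point $\beta(x,y)\in\overline{L}^s(x)\cap\overline{L}^u(y)$ together with the identity $\overline{A}^i(x\exp v)=\overline{A}^i(x)\exp\big((D_e\overline{A})^i v\big)$ and left invariance of $D$ to reduce to the properness statement $D(e,\exp w)\to\infty$ as $\|w\|\to\infty$, which is justified exactly as you say (completeness of a left-invariant Riemannian metric plus the fact that $\exp$ is a global diffeomorphism on a simply connected nilpotent $N$); for (1) your normalization $(x_k,y_k)\mapsto(e,x_k^{-1}y_k)$ is legitimate because $D(\overline{A}^i(e),\overline{A}^i(x_k^{-1}y_k))=D(e,\overline{A}^i(x_k)^{-1}\overline{A}^i(y_k))=D(\overline{A}^i(x_k),\overline{A}^i(y_k))$, and the compactness-plus-(2) argument goes through cleanly. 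One very minor point: when you pass from $x=z\in\overline{L}^u(y)$ to ``$y\in\overline{L}^u(x)$'' you are implicitly using that the $\overline{L}^u$-leaves partition $N$ (equivalently, that membership in a common unstable leaf is an equivalence relation), which follows immediately from the dynamical description $\overline{L}^u(p)=\{q:D(\overline{A}^i(p),\overline{A}^i(q))\to0\text{ as }i\to-\infty\}$; this is harmless but worth stating.
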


\begin{lemma}[\cite{[Su 2]} Lemma 2.3]\label{Semiconjugacy}
Under the assumptions and notations as above, there is a unique map $\overline{h}$ : $N\rightarrow N$ such that
\begin{enumerate}
\item[(1)]$\overline{A}\circ\overline{h}=\overline{h}\circ\overline{f}$,
\item[(2)]$D(\overline{h}, id_{N})$ is finite,
\end{enumerate}
where $id_{N}$ : $N\rightarrow N$ is the identity map of $N$. Furthermore $\overline{h}$ is surjective and uniformly continuous under $D$ .

In addition, if $f$ is not an expanding map then $\overline{h}$ is a homeomorphism i.e. $\overline{h}$ is $D$-biuniformly continuous. (See \cite{[Ao-Hi]} Proposition 8.4.2)
\end{lemma}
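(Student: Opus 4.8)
The plan is to realise $\overline{h}$ as the map sending each $x\in N$ to the unique point whose $\overline{A}$--orbit shadows the $\overline{f}$--orbit of $x$, in the spirit of Franks and Manning (see \cite{[Ao-Hi]}, \cite{[Su 2]}). First I would fix compatible lifts: lifting a homotopy from $A$ to $f$ and starting it at $\overline{A}$ produces a lift $\overline{f}\colon N\to N$ of $f$ with $K:=D(\overline{f},\overline{A})<\infty$, since $D(\overline{f}(x),\overline{A}(x))$ is bounded by the length of the path the homotopy traces over $\pi(x)$, which is uniform in $x$ by compactness of $N/\Gamma$ and $\Gamma$--invariance of $D$ (consistently, $\overline{f}_{*}=\overline{A}_{*}$ on $\Gamma$ by Lemma \ref{fbar=Abar on gamma}). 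Since $f$ is a self--covering map and $N$ is simply connected, $\overline{f}$ is a homeomorphism of $N$, so $\overline{f}^{\,i}$ is defined for all $i\in\mathbb{Z}$ (and $\overline{f}$ has a unique fixed point by Lemma \ref{fixedpoint}). By Lemma \ref{Su 2 Lemma 1.3}, $\overline{A}$ is hyperbolic, so the foliations $\overline{L}^{s},\overline{L}^{u}$, the contraction/expansion estimates, and the intersection map $\beta(\cdot,\cdot)$ recorded above are all available; from $\beta(\cdot,\cdot)$ and those estimates one gets that $\overline{A}$ has the shadowing property with a \emph{uniform} gain --- there is $\varepsilon_{0}=\varepsilon_{0}(K)$ so that every $K$--pseudo orbit of $\overline{A}$ is $\varepsilon_{0}$--traced by some point, which is unique by part (2) of \cite{[Hi]} Lemma 3.2. (This is the classical linear shadowing of a hyperbolic automorphism; in the nilpotent case one runs it in Mal'cev coordinates adapted to $E_{e}^{s}\oplus E_{e}^{u}$, using that $E_{e}^{s},E_{e}^{u}$ are subalgebras --- sums of generalized eigenspaces for eigenvalues inside, resp.\ outside, the unit circle --- so that $\overline{L}^{s}(e),\overline{L}^{u}(e)$ are subgroups.)

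Next I would build $\overline{h}$. For $x\in N$ the sequence $(\overline{f}^{\,i}(x))_{i\in\mathbb{Z}}$ is a $K$--pseudo orbit of $\overline{A}$, since $D(\overline{A}\,\overline{f}^{\,i}(x),\overline{f}^{\,i+1}(x))\le K$; let $\overline{h}(x)$ be the unique point whose $\overline{A}$--orbit $\varepsilon_{0}$--traces it. Then $D(\overline{h}(x),x)\le\varepsilon_{0}$, so $D(\overline{h},id_{N})\le\varepsilon_{0}<\infty$, which is $(2)$. Property $(1)$ is formal: the $\overline{A}$--orbit $(\overline{A}^{\,i+1}\overline{h}(x))_{i}$ of $\overline{A}\,\overline{h}(x)$ also $\varepsilon_{0}$--traces $(\overline{f}^{\,i}(\overline{f}(x)))_{i}$, the pseudo orbit attached to $\overline{f}(x)$, so by uniqueness $\overline{A}\,\overline{h}(x)=\overline{h}(\overline{f}(x))$. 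Uniqueness of $\overline{h}$ among maps obeying $(1)$--$(2)$ again uses part (2) of \cite{[Hi]} Lemma 3.2: if $\overline{h}_{1},\overline{h}_{2}$ both obey $(1)$--$(2)$, then $(1)$ and invertibility of $\overline{f}$ give $\overline{A}^{\,i}\overline{h}_{j}=\overline{h}_{j}\overline{f}^{\,i}$ for every $i\in\mathbb{Z}$, hence
\begin{equation*}
D(\overline{A}^{\,i}\overline{h}_{1}(x),\overline{A}^{\,i}\overline{h}_{2}(x))=D(\overline{h}_{1}(\overline{f}^{\,i}x),\overline{h}_{2}(\overline{f}^{\,i}x))\le D(\overline{h}_{1},id_{N})+D(\overline{h}_{2},id_{N})
\end{equation*}
for all $i\in\mathbb{Z}$, which forces $\overline{h}_{1}(x)=\overline{h}_{2}(x)$.

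For uniform continuity of $\overline{h}$ I would use part (1) of \cite{[Hi]} Lemma 3.2: given $\varepsilon>0$, put $L=2\varepsilon_{0}+1$ and take the corresponding $J$; the maps $\overline{f}^{\,i}$ with $|i|\le J$ are finitely many and uniformly continuous on $(N,D)$, so there is $\delta>0$ with $D(x,y)<\delta\Rightarrow D(\overline{f}^{\,i}x,\overline{f}^{\,i}y)\le1$ for $|i|\le J$, whence $D(\overline{A}^{\,i}\overline{h}(x),\overline{A}^{\,i}\overline{h}(y))\le L$ for $|i|\le J$ and therefore $D(\overline{h}(x),\overline{h}(y))\le\varepsilon$. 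Surjectivity follows from $\overline{h}$ being proper (it is at bounded distance from $id_{N}$, and $N$ is complete) and semiconjugating $\overline{f}$ to the hyperbolic $\overline{A}$: one shows, as in \cite{[Su 2]} and \cite{[Ao-Hi]}, that every $\overline{A}$--orbit is shadowed by an $\overline{f}$--orbit, and part (2) of \cite{[Hi]} Lemma 3.2 then identifies its $\overline{h}$--preimage. Finally, when $f$ is not an expanding map, $\overline{h}$ is injective: two points with the same $\overline{h}$--image have $\overline{f}$--orbits staying at bounded mutual distance for all $i\in\mathbb{Z}$, and one excludes their being distinct by combining the $c$--expansiveness of $f$ (read off on $N/\Gamma$) with the structure of the stable foliation --- this is the sole place the hypothesis is used. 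By invariance of domain a continuous bijection of $N$ is then a homeomorphism, and $D$--bi--uniform continuity follows by the symmetric argument applied to the hyperbolic automorphism $\overline{A}^{-1}$; the precise treatment of this last step is \cite{[Ao-Hi]} Proposition 8.4.2.

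I expect the genuine difficulties to be the construction step and the injectivity step: transplanting the linear shadowing of $\overline{A}$, and the analysis of the fibres of $\overline{h}$, from the abelian model $N=\mathbb{R}^{n}$ to a non-abelian nilpotent $N$ forces one to work in Mal'cev coordinates subordinate to $E_{e}^{s}\oplus E_{e}^{u}$ and to control how left translations and the group product interact with the stable and unstable subgroups; and it is precisely in proving injectivity that the hypothesis ``$f$ is not an expanding map'' is needed, the expanding case being handled separately via Gromov's Theorem \ref{thm}.
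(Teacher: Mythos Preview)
The paper does not give its own proof of this lemma: it is quoted verbatim from \cite{[Su 2]} Lemma 2.3, with the final homeomorphism clause deferred to \cite{[Ao-Hi]} Proposition 8.4.2. So there is nothing in the paper to compare your argument against beyond those citations.

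That said, your sketch is the standard Franks--Manning shadowing construction and is in line with what those references do: lift the homotopy to get $D(\overline{f},\overline{A})<\infty$, use hyperbolicity of $\overline{A}$ to shadow each bi-infinite $\overline{f}$-orbit by a unique $\overline{A}$-orbit, and read off (1), (2), uniqueness, and uniform continuity from the two parts of \cite{[Hi]} Lemma 3.2. Two small remarks. First, in your uniform-continuity step you use that $\overline{f}^{\,i}$ is $D$-uniformly continuous for negative $i$ as well; this is true, but it is worth noting why: $\overline{f}^{-1}$ is equivariant for the cocompact subgroup $\overline{A}_{*}(\Gamma)\subset\Gamma$, hence descends to a continuous map between compact quotients, and therefore is $D$-uniformly continuous on $N$. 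Second, your claim that $E_{e}^{s}$ and $E_{e}^{u}$ are Lie subalgebras is correct (generalized eigenspaces of a Lie algebra automorphism satisfy $[E_{\lambda},E_{\mu}]\subset E_{\lambda\mu}$), and this is exactly what underlies the paper's description $\overline{L}^{\sigma}(e)=\exp(E_{e}^{\sigma})$. Your identification of the injectivity step as the one genuinely using the ``not expanding'' hypothesis, and your deferral of its details to \cite{[Ao-Hi]} Proposition 8.4.2, matches the paper's own treatment.
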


\begin{lemma}[\cite{[Su 2]} Lemma 2.4]\label{lm Su 2 Lemma 2.4}
For the semiconjugacy $\overline{h}$ of lemma \ref{Semiconjugacy}, we have the following properties:
\begin{enumerate}
\item[(1)]There exists $K>0$ such that $D(\overline{h}(x \gamma), \overline{h}(x) \gamma)<K$ for $x\in \mathbb{N}$ and $\gamma\in\Gamma$.
\item[(2)]For any $\lambda>0$ , there exists $L\in \mathbb{N}$ such that $D(\overline{h}(x \gamma), \overline{h}(x) \gamma)<\lambda$ for $x\in N$ and $\gamma\in\overline{A}_*^L(\Gamma)$ .
\item[(3)]For $x\in N$ and $\gamma\in\bigcap_{i=0}^{\infty}\overline{A}_*^i(\Gamma)$ , we have $\overline{h}(x \gamma)=\overline{h}(x) \gamma$ .
\item[(4)]For $x\in N$ and $\gamma\in\Gamma$, we have $\overline{h}(x \gamma)\in \overline{L}^s(\overline{h}(x) \gamma)$.
\end{enumerate}
\end{lemma}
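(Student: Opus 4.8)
The plan is to set $\phi:=\overline f_*=\overline A_*:\Gamma\to\Gamma$ (these coincide by Lemma~\ref{fbar=Abar on gamma}); since $\overline A$ is an automorphism of $N$ with $\overline A(\Gamma)\subseteq\Gamma$ one has $\phi(\gamma)=\overline A(\gamma)$, and since $\overline f$ is a lift of $f$ one has $\overline f(x\gamma)=\overline f(x)\phi(\gamma)$, whence by induction $\overline f^{\,i}(x\gamma)=\overline f^{\,i}(x)\,\phi^{\,i}(\gamma)$ for all $i\ge0$. Together with the intertwining $\overline A^{\,i}\circ\overline h=\overline h\circ\overline f^{\,i}$ and the finiteness of $C_0:=D(\overline h,\mathrm{id}_N)$ from Lemma~\ref{Semiconjugacy}, this gives (1) at once: by $\Gamma$-invariance of $D$, $D(\overline h(x\gamma),\overline h(x)\gamma)\le D(\overline h(x\gamma),x\gamma)+D(x\gamma,\overline h(x)\gamma)=D(\overline h(x\gamma),x\gamma)+D(\overline h(x),x)\le 2C_0$, so $K:=2C_0$ works.

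For (4) I would first record the following: if $z,w\in N$ satisfy $\sup_{i\ge0}D(\overline A^{\,i}z,\overline A^{\,i}w)<\infty$ then $w\in\overline L^s(z)$. Indeed, let $q:=\beta(z,w)\in\overline L^s(z)\cap\overline L^u(w)$, whose existence is \cite{[Hi]}~Lemma~3.2; then $D(\overline A^{\,i}q,\overline A^{\,i}w)$ is bounded for $i\ge0$ (estimate it by $D(\overline A^{\,i}q,\overline A^{\,i}z)+D(\overline A^{\,i}z,\overline A^{\,i}w)$, the first term tending to $0$ because $q\in\overline L^s(z)$, the second bounded by hypothesis) and tends to $0$ as $i\to-\infty$ because $q\in\overline L^u(w)$; hence it is bounded over all $i\in\mathbb Z$, so $q=w$ by \cite{[Hi]}~Lemma~3.2(2), i.e. $w\in\overline L^s(z)$. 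Now take $z=\overline h(x)\gamma$ and $w=\overline h(x\gamma)$: applying (1) to the point $\overline f^{\,i}(x)\in N$ and the element $\phi^{\,i}(\gamma)\in\Gamma$, and using the identities above, $D\big(\overline A^{\,i}(\overline h(x\gamma)),\overline A^{\,i}(\overline h(x)\gamma)\big)=D\big(\overline h(\overline f^{\,i}(x)\,\phi^{\,i}(\gamma)),\overline h(\overline f^{\,i}(x))\,\phi^{\,i}(\gamma)\big)\le K$ for all $i\ge0$, so the above gives $\overline h(x\gamma)\in\overline L^s(\overline h(x)\gamma)$.

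For (2), part (4) lets me write $\overline h(x\gamma)=\overline h(x)\,\gamma\,\exp\xi(x,\gamma)$ for a unique $\xi(x,\gamma)\in E^s_e$, since $\overline L^s(\overline h(x)\gamma)=\overline h(x)\gamma\cdot\exp(E^s_e)$; by left-invariance $D(e,\exp\xi(x,\gamma))=D(\overline h(x\gamma),\overline h(x)\gamma)\le K$. Because $N$ is connected simply connected nilpotent, $\exp:Lie(N)\to N$ is a diffeomorphism, so $\exp(E^s_e)$ is closed in $N$; and $(N,D)$ is a homogeneous Riemannian manifold, hence complete and therefore proper, so $\exp(E^s_e)\cap\overline B_D(e,K)$ is compact and $R_0:=\sup\{\|v\|:v\in E^s_e,\ D(e,\exp v)\le K\}<\infty$. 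Thus $\|\xi(x,\gamma)\|\le R_0$ for all $x\in N$, $\gamma\in\Gamma$. Now fix $\lambda>0$, choose $r>0$ with $D(e,\exp v)<\lambda$ whenever $\|v\|\le r$, let $c\ge1$ and $0<\theta<1$ be constants with $\|(D_e\overline A)^{n}v\|\le c\theta^{n}\|v\|$ for $v\in E^s_e$ (the hyperbolicity estimate for $D_e\overline A$), and choose $L$ with $c\theta^{L}R_0\le r$. Given $x\in N$ and $\gamma=\phi^{L}(\gamma_0)\in\phi^{L}(\Gamma)$, use that the lift $\overline f:N\to N$ of the self-covering $f$ is itself a covering map, in particular surjective, so that $\overline f^{\,L}$ is surjective and we may write $x=\overline f^{\,L}(x')$; then $x\gamma=\overline f^{\,L}(x')\phi^{L}(\gamma_0)=\overline f^{\,L}(x'\gamma_0)$, and using $\overline A\circ\exp=\exp\circ D_e\overline A$ together with the intertwining,
\[
\overline h(x\gamma)=\overline A^{\,L}\!\big(\overline h(x')\,\gamma_0\,\exp\xi(x',\gamma_0)\big)=\overline h(x)\,\gamma\,\exp\!\big((D_e\overline A)^{L}\xi(x',\gamma_0)\big),
\]
so $D(\overline h(x\gamma),\overline h(x)\gamma)=D\big(e,\exp((D_e\overline A)^{L}\xi(x',\gamma_0))\big)<\lambda$, since $\|(D_e\overline A)^{L}\xi(x',\gamma_0)\|\le c\theta^{L}R_0\le r$.

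Finally (3) follows: if $\gamma\in\bigcap_{i\ge0}\phi^{\,i}(\Gamma)$ then $\gamma\in\phi^{L}(\Gamma)$ for every $L$, so (2) gives $D(\overline h(x\gamma),\overline h(x)\gamma)<\lambda$ for every $\lambda>0$, hence $\overline h(x\gamma)=\overline h(x)\gamma$. I expect the main obstacles to be the core of (4) — promoting ``the forward $\overline A$-orbit of the pair stays within $K$'' to ``the two points lie on one stable leaf'', which uses the local product structure and the expansiveness assertion \cite{[Hi]}~Lemma~3.2(2) — and, within (2), extracting the $(x,\gamma)$-uniform bound $R_0$ on the stable coordinate, which rests on $\exp$ being a global diffeomorphism and on $(N,D)$ being proper; granted these, parts (1), (2) and (3) reduce to bookkeeping with the semiconjugacy and the uniform contraction of $\overline A$ along $E^s_e$.
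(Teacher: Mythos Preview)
The paper does not supply its own proof of this lemma: it is quoted verbatim from \cite{[Su 2]} (Lemma~2.4 there) and used as a black box. So there is no in-paper argument to compare against, and the relevant question is simply whether your reconstruction is sound. It is.

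Your route is the natural one and matches the standard argument: (1) is the triangle inequality plus $\Gamma$-invariance of $D$ and $D(\overline h,\mathrm{id}_N)<\infty$; (4) is obtained by showing the forward $\overline A$-orbit of the pair $(\overline h(x)\gamma,\overline h(x\gamma))$ stays at bounded distance (via (1) and the identities $\overline f^{\,i}(x\gamma)=\overline f^{\,i}(x)\,\phi^{\,i}(\gamma)$, $\overline A^{\,i}(\gamma)=\phi^{\,i}(\gamma)$), then using the global product structure (\cite{[Hi]}~Lemma~3.2) and the expansiveness statement (\cite{[Hi]}~Lemma~3.2(2)) to force $\beta(\overline h(x)\gamma,\overline h(x\gamma))=\overline h(x\gamma)$. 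For (2) you write the stable displacement as $\exp\xi(x,\gamma)$ and exploit that $\overline A$ is a Lie group automorphism (so $\overline A\circ\exp=\exp\circ D_e\overline A$ and $\overline A(uvw)=\overline A(u)\overline A(v)\overline A(w)$) together with surjectivity of $\overline f$ to pull back $L$ steps and contract $\xi$. The uniform bound $R_0<\infty$ follows, as you note, because $\exp$ is a global diffeomorphism on a simply connected nilpotent $N$ and $(N,D)$, being homogeneous under left translations, is complete and hence proper; this is the only point that deserved an explicit word, and you gave it. Part (3) is then immediate.

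Two cosmetic remarks. First, the surjectivity of $\overline f$ that you invoke in (2) can be sharpened: since $N$ is simply connected and $f$ is a self-covering, any lift $\overline f:N\to N$ is a homeomorphism, not merely a covering; this is implicit in the paper (cf.\ the discussion around Lemma~\ref{fixedpoint} and the use of $\overline f^{\,i}$ for $i<0$). Second, in the statement of the lemma the ``$x\in\mathbb N$'' in (1) is a typo for ``$x\in N$''; your proof reads it correctly.
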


\begin{remark} \label{rm page 270 (8.5)}
By part (2) of theorem \ref{Semiconjugacy}, there is a $\delta_K>0$ such that $D(\overline{h}(x),x)<\delta_K$ for $x\in N$, we have (see \cite{[Ao-Hi]} page 270 (8.5))
\begin{equation*}
\overline{W}^s(x)\subset U_{\delta_K}(\overline{L}^s(\overline{h}(x)))\quad\text{and}\quad \overline{W}^u(x;\textbf{e})\subset U_{\delta_K}(\overline{L}^u(\overline{h}(x))).
\end{equation*}
\end{remark}

By lemma \ref{fixedpoint} if $f:N/\Gamma \rightarrow N/\Gamma$ is a $TA$-map and $\overline{f}:N\rightarrow N$ a lift of it, then there exists a unique fixed point say $b$ such that $\overline{f}(b)=b$. For simplisity we can suppose that $b=e$. Indeed, we can choose a homeomorphism $\varphi$ of $N$ such that $\varphi(\pi(b))=e$. Then $\varphi\circ f\circ\varphi^{-1}$ is a $TA$-covering map such that $\varphi\circ f\circ\varphi^{-1}(e)=e$.\\

Let $x\in N$, we define the stable set and unstable sets of $x$ for $f$ and $A$ as follow (for more details see \cite{[Ao-Hi]}):
\begin{align*}
\overline{W}^s(x)&=\{y\in N:\lim_{i\rightarrow\infty}D(\overline{f}^i(x),\overline{f}^i(y))=0\},\\
\overline{W}^u(x,\textbf{e})&=\{y\in N:\lim_{i\rightarrow -\infty}D(\overline{f}^i(x),\overline{f}^i(y))=0\},
\end{align*}
Where $\textbf{e}=(\ldots,e,e,e,\ldots)$.

\begin{remark}\label{rm w l}
By lemma \ref{Semiconjugacy}, since $\overline{h}$ is $D$-uniformly continuous then $\overline{h}(\overline{W}^s(x))=\overline{L}^s(\overline{h}(x))$ and $\overline{h}(\overline{W}^u(x;\textbf{e}))=\overline{L}^u(\overline{h}(x))$.
\end{remark}

\begin{lemma}\label{lm gamma in w}
The following statements hold:
\begin{enumerate}
\item[(1)]$\overline{W}^s(x) \gamma=\overline{W}^s(x \gamma)$ for $\gamma\in\Gamma$ and $x\in N$,
\item[(2)]$\overline{W}^u(x;\textbf{e}) \gamma=\overline{W}^u(x \gamma;\textbf{e})$ for $\gamma\in\Gamma$ and $x\in N$,
\end{enumerate}
\end{lemma}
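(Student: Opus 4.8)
The plan is to make everything rest on the single structural property of a lift. Keep the normalisation $\overline{f}(e)=e$ provided by Lemma \ref{fixedpoint}. Since $\overline{f}$ is a lift of $f$ through the natural projection $\pi:N\to N/\Gamma$, there is a homomorphism $\overline{f}_*:\Gamma\to\Gamma$ with $\overline{f}(x\gamma)=\overline{f}(x)\,\overline{f}_*(\gamma)$ for $x\in N$, $\gamma\in\Gamma$, and by Lemma \ref{fbar=Abar on gamma} one has $\overline{f}_*=\overline{A}_*$; iterating gives $\overline{f}^{\,i}(x\gamma)=\overline{f}^{\,i}(x)\,\overline{f}_*^{\,i}(\gamma)$ for every $i\ge 0$, with $\overline{f}_*^{\,i}(\gamma)\in\Gamma$ because $\overline{A}_*(\Gamma)\subseteq\Gamma$. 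To prove (1) I would fix $x\in N$ and $\gamma\in\Gamma$ and note that for $y\in N$ and $i\ge 0$, the identity above together with the $\Gamma$-invariance of $D$ gives $D(\overline{f}^{\,i}(x\gamma),\overline{f}^{\,i}(y\gamma))=D(\overline{f}^{\,i}(x)\overline{f}_*^{\,i}(\gamma),\overline{f}^{\,i}(y)\overline{f}_*^{\,i}(\gamma))=D(\overline{f}^{\,i}(x),\overline{f}^{\,i}(y))$. Letting $i\to\infty$ shows that $y\in\overline{W}^s(x)$ iff $y\gamma\in\overline{W}^s(x\gamma)$, which is the inclusion $\overline{W}^s(x)\gamma\subseteq\overline{W}^s(x\gamma)$; applying the ``only if'' direction with $(x,\gamma)$ replaced by $(x\gamma,\gamma^{-1})$ yields the reverse inclusion, hence (1).

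For (2) I want to run the same computation with $i\to-\infty$. Here I first use that $\overline{f}$, being the lift of a self-covering of the compact nil-manifold $N/\Gamma$ through its universal cover, is a homeomorphism of $N$, so $\overline{f}^{\,i}$ is defined for all $i\in\mathbb{Z}$. Two cases are then immediate. If $f$ is expanding, then $\overline{f}^{-1}$ is a uniform contraction on $N$, so $\overline{W}^u(z;\textbf{e})=N$ for every $z$ and (2) is the tautology $N\gamma=N$. If $f$ is a $TA$-homeomorphism, then $\overline{A}_*$ is an automorphism of $\Gamma$, so $\overline{f}^{\,i}(x\gamma)=\overline{f}^{\,i}(x)\,\overline{A}_*^{\,i}(\gamma)$ with $\overline{A}_*^{\,i}(\gamma)\in\Gamma$ for all $i\in\mathbb{Z}$, and the argument of (1) goes through verbatim with $i\to-\infty$.

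The remaining case --- $f$ a proper self-covering that is not expanding --- is the crux, and is where I expect the real difficulty to lie. Now $\overline{A}_*=\overline{f}_*$ is injective but not onto, so $\overline{f}^{-n}(x\gamma)=\overline{f}^{-n}(x)\cdot\delta$ for a deck transformation $\delta=\overline{f}^{-n}R_\gamma\overline{f}^{\,n}$ that in general does not lie in $\Gamma$, so this identity is no longer an isometry and the direct approach fails. My plan to get around this is to pass through the semiconjugacy $\overline{h}$ of Lemma \ref{Semiconjugacy}, which in this case is a homeomorphism: by Remark \ref{rm w l}, $\overline{W}^u(x;\textbf{e})=\overline{h}^{-1}(\overline{L}^u(\overline{h}(x)))$, so (2) reduces to a statement about the leaves $\overline{L}^u(w)=w\,\overline{L}^u(e)$ of the automorphism $\overline{A}$ together with a comparison of $\overline{h}(x)\gamma$ and $\overline{h}(x\gamma)$, which by Lemma \ref{lm Su 2 Lemma 2.4}(4) lie on a common local stable set $\overline{L}^s$. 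The main obstacle is to turn this last observation into the required identity: that is, to control precisely how $\overline{h}$ intertwines the $\Gamma$-action with the stable/unstable foliations of $\overline{A}$ in the non-invertible case, using the transversality of $\overline{L}^s$ and $\overline{L}^u$ and the (bi)uniform continuity of $\overline{h}$, and then feed the result back through $\overline{h}^{-1}$.
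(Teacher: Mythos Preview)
Your argument for (1) is correct, and so are the two easy subcases of (2). The paper itself gives no computation at all here: it simply invokes Lemma~6.6.11 of Aoki--Hiraide, a general statement about how covering transformations act on the lifted stable and unstable sets of a self-covering map, and specialises from $G(\pi)$ to $\Gamma$ via Corollary~\ref{isomorphismGamma}. So for (1) and the invertible/expanding cases of (2) you have in effect reproduced the content of that citation by hand.

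The genuine gap is the remaining case of (2). You correctly observe that for a proper non-expanding covering the relation $\overline{f}^{-n}(x\gamma)=\overline{f}^{-n}(x)\cdot\delta_n$ with $\delta_n\in\Gamma$ breaks down once $\gamma\notin\overline{f}_*^{\,n}(\Gamma)$, so the $\Gamma$-invariance of $D$ can no longer be used. (A side remark: $\overline{f}^{-n}R_\gamma\overline{f}^{\,n}$ is then \emph{not} a deck transformation for $\pi$ at all, since $G(\pi)\cong\Gamma$; calling it a ``deck transformation not in $\Gamma$'' is a contradiction in terms.) Your proposed detour through the semiconjugacy $\overline{h}$, however, is circular in this paper's logical order. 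Via Remark~\ref{rm w l} the desired identity becomes $\overline{h}^{-1}\bigl(\overline{L}^u(\overline{h}(x))\bigr)\gamma=\overline{h}^{-1}\bigl(\overline{L}^u(\overline{h}(x\gamma))\bigr)$, and to match these you must compare $\overline{h}(x\gamma)$ with $\overline{h}(x)\gamma$ in the \emph{unstable} direction. But Lemma~\ref{lm Su 2 Lemma 2.4}(4) only gives the stable relation $\overline{h}(x\gamma)\in\overline{L}^s(\overline{h}(x)\gamma)$; two points on a common stable leaf sit on \emph{different} unstable leaves unless they coincide. Forcing them to coincide is precisely Proposition~\ref{final lemma}, whose proof uses Lemma~\ref{lm gamma in w} through Propositions~\ref{lm su1 9} and~\ref{properties of h'}. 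So this route cannot close without an independent input, which is exactly what the Aoki--Hiraide covering-space lemma provides and what the paper cites.
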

\begin{proof}
It is an easy corollary of lemma 6.6.11 of \cite{[Ao-Hi]}. According to corollary \ref{isomorphismGamma}, in the mentioned lemma put $\Gamma$ instead of $G(\pi)$ and $N$ instead of $\overline{X}$.
\end{proof}

\begin{lemma}\label{lm gamma in l}
The following statements hold:
\begin{enumerate}
\item[(1)]$\overline{L}^s(x)\gamma=\overline{L}^s(x \gamma)$ for $\gamma\in\Gamma$ and $x\in N$,
\item[(2)]$\overline{L}^u(x)\gamma=\overline{L}^u(x \gamma)$ for $\gamma\in\Gamma$ and $x\in N$,
\end{enumerate}
\end{lemma}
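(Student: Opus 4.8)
The plan is to read the statement off the construction of the sets $\overline{L}^{\sigma}$, in the same spirit as Lemma \ref{lm gamma in w}. Two descriptions are at our disposal: by definition $\overline{L}^{\sigma}(x)=x\cdot\overline{L}^{\sigma}(e)$, where $\overline{L}^{\sigma}(e)=\exp(E_e^{\sigma})$ is a fixed subset of $N$ not depending on $x$; and $\overline{L}^{s}(x)$, $\overline{L}^{u}(x)$ are the stable resp. unstable sets of $\overline{A}$ through $x$, i.e. $\overline{L}^{s}(x)=\{y\in N:D(\overline{A}^{i}(x),\overline{A}^{i}(y))\to 0\ (i\to\infty)\}$ and $\overline{L}^{u}(x)=\{y\in N:D(\overline{A}^{i}(x),\overline{A}^{i}(y))\to 0\ (i\to-\infty)\}$. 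By Corollary \ref{isomorphismGamma} each $\gamma\in\Gamma$ acts on $N$ as a covering transformation of $\pi:N\to N/\Gamma$, that is, as a translation of $N$; and since $D$ is left invariant, translation by \emph{any} element of $N$ is a $D$-isometry.

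Granting this, I would argue as follows. Fix $\gamma\in\Gamma$ and $x,y\in N$. Because $\overline{A}$ is an automorphism of $N$, the iterate $\overline{A}^{i}$ conjugates the translation by $\gamma$ to the translation by $\overline{A}^{i}(\gamma)$ --- an element of $N$ (an element of $\Gamma$ when $i\geq 0$, but possibly not when $i<0$) --- so that $\overline{A}^{i}(x\gamma)$ and $\overline{A}^{i}(y\gamma)$ are obtained from $\overline{A}^{i}(x)$ and $\overline{A}^{i}(y)$ by one and the same translation. As translations are $D$-isometries,
\[
D\big(\overline{A}^{i}(x\gamma),\overline{A}^{i}(y\gamma)\big)=D\big(\overline{A}^{i}(x),\overline{A}^{i}(y)\big)\qquad\text{for every }i\in\mathbb{Z}.
\]
Letting $i\to\infty$ gives $y\in\overline{L}^{s}(x)\iff y\gamma\in\overline{L}^{s}(x\gamma)$, which is (1); letting $i\to-\infty$ gives (2) in precisely the same way. (Alternatively, and more briefly, $\overline{L}^{\sigma}(x)\gamma$ is the image of $x\cdot\overline{L}^{\sigma}(e)$ under the translation by $\gamma$, which equals $(x\gamma)\cdot\overline{L}^{\sigma}(e)=\overline{L}^{\sigma}(x\gamma)$.) If one wishes to avoid the dynamical description of $\overline{L}^{u}$, part (2) may also be deduced by pushing the already-proved identity $\overline{W}^{u}(x;\textbf{e})\gamma=\overline{W}^{u}(x\gamma;\textbf{e})$ (Lemma \ref{lm gamma in w}) through the semiconjugacy $\overline{h}$, using Remark \ref{rm w l} together with the equivariance estimates of Lemma \ref{lm Su 2 Lemma 2.4}.

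I do not expect a genuine obstacle; the only point that needs care is the bookkeeping of the side on which $\Gamma$ acts, so that the covering transformations, the metric $D$, and the slices $\overline{L}^{\sigma}(x)$ are equivariant for one and the same action. This is worth making explicit, because at first sight part (2) looks as delicate as the endomorphism case: we only have $\overline{A}(\Gamma)\subseteq\Gamma$, hence $\overline{A}^{-i}(\gamma)\notin\Gamma$ in general for $i>0$, and an argument resting only on the $\Gamma$-invariance of $D$ would stall for the unstable leaves. What puts (1) and (2) on the same footing is exactly that covering transformations are translations, so the full left invariance of $D$ is available and the forward/backward asymmetry of $\overline{A}$ on $\Gamma$ plays no role --- the displayed identity in fact holds with $\gamma$ replaced by any element of $N$.
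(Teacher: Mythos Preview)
There is a genuine gap, and it is precisely at the point you yourself flag as delicate. In the paper's conventions $N/\Gamma$ is the space of left cosets $g\Gamma$, so the deck transformations $\gamma\in\Gamma$ act by \emph{right} multiplication $x\mapsto x\gamma$. Left invariance of $D$ says that \emph{left} translations $x\mapsto gx$ are isometries; it says nothing about right translations. Thus your sentence ``since $D$ is left invariant, translation by any element of $N$ is a $D$-isometry'' is simply false in a non-abelian $N$, and the displayed identity
\[
D\big(\overline{A}^{i}(x\gamma),\overline{A}^{i}(y\gamma)\big)=D\big(\overline{A}^{i}(x),\overline{A}^{i}(y)\big)
\]
does not follow from left invariance. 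For $i\ge 0$ it does follow, but from the \emph{right} $\Gamma$-invariance of $D$ (since $\overline{A}^{i}(\gamma)\in\Gamma$); this rescues part~(1). For $i<0$, however, $\overline{A}^{i}(\gamma)$ need not lie in $\Gamma$, right $\Gamma$-invariance is unavailable, and left invariance is irrelevant --- so your argument for part~(2) collapses exactly where you thought you had sidestepped the difficulty.

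Your ``brief'' alternative has the same defect in disguise: from $\overline{L}^{\sigma}(x)=x\cdot\overline{L}^{\sigma}(e)$ you get $\overline{L}^{\sigma}(x)\gamma=x\,\overline{L}^{\sigma}(e)\,\gamma$, whereas $\overline{L}^{\sigma}(x\gamma)=x\gamma\,\overline{L}^{\sigma}(e)$; these coincide only if $\gamma$ normalizes $\overline{L}^{\sigma}(e)$, which is not automatic for nilpotent (non-abelian) $N$ and is in fact the whole content of the lemma. Your third suggestion --- pushing Lemma~\ref{lm gamma in w} through $\overline{h}$ via Remark~\ref{rm w l} --- runs into the problem that $\overline{h}(x\gamma)\neq\overline{h}(x)\gamma$ in general (establishing that equality is the goal of the whole section), and Lemma~\ref{lm Su 2 Lemma 2.4} only gives approximate equivariance. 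The paper does not spell out an argument either; it simply invokes the same general equivariance statement (Aoki--Hiraide, Lemma~6.6.11) for the lift $\overline{A}$ that was used for $\overline{f}$ in Lemma~\ref{lm gamma in w}. Whatever mechanism makes that lemma work for the unstable sets of a non-injective covering map is what is needed here, and your write-up has not supplied it.
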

\begin{proof}
Proof is the same as in lemma \ref{lm gamma in w}.
\end{proof}

\begin{lemma}[\cite{[Su 2]} Lemma 5.4]
Let $N/\Gamma$ be a nil-manifold. If $f:N/\Gamma\rightarrow N/\Gamma$ is a $TA$-covering map, then the nonwandering set $\Omega(f)$ coincides with the entire space $N/\Gamma$.
\end{lemma}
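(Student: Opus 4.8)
The plan is to show that through every point of $N/\Gamma$ there pass periodic $\delta$-pseudo-orbits of $f$ for every $\delta>0$; since $f$ has POTP, every such point is nonwandering---shadowing a periodic pseudo-orbit through $x$ by a point $y$, both $y$ and $f^{n}(y)$ lie in any prescribed neighbourhood of $x$---so $\Omega(f)=N/\Gamma$. For the model side: by Lemma \ref{Su 2 Lemma 1.3} the lift $\overline{A}$ is a hyperbolic automorphism, hence $A$ is $k$-to-one with $k=[\Gamma:\overline{A}(\Gamma)]=|\det D_{e}\overline{A}|$ and so preserves the normalized Haar measure of $N/\Gamma$; this measure has full support, so Poincar\'e recurrence gives $\Omega(A)=N/\Gamma$. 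The same applies to each finite cover $N/\overline{A}^{L}(\Gamma)$ of $N/\Gamma$ (note $\overline{A}^{L}(\Gamma)\subseteq\Gamma$ since $\overline{A}(\Gamma)\subseteq\Gamma$), on which $\overline{A}$ induces a hyperbolic nil-endomorphism $A_{L}$; consequently $\Omega(A_{L})=N/\overline{A}^{L}(\Gamma)$, and every point of this cover carries periodic pseudo-orbits of $A_{L}$ of arbitrarily small jump.

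\emph{Transfer, non-expanding case.} Suppose first that $f$ is not a topological expanding map, so by Lemma \ref{Semiconjugacy} the semiconjugacy $\overline{h}$ is a $D$-biuniformly continuous homeomorphism of $N$ with $\overline{h}\circ\overline{f}=\overline{A}\circ\overline{h}$. Fix $z\in N/\Gamma$, a lift $\tilde{z}\in N$, and put $w:=\overline{h}(\tilde{z})$. Given a target $\delta>0$, choose by Lemma \ref{lm Su 2 Lemma 2.4}(2) an $L$ with $D(\overline{h}(x\gamma),\overline{h}(x)\gamma)<\delta_{0}$ for all $x\in N$ and $\gamma\in\Gamma_{L}:=\overline{A}^{L}(\Gamma)$, where $\delta_{0}$ is small, to be fixed. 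The image of $w$ in $N/\Gamma_{L}$ admits, for every $\eta$ below the injectivity radius of $N/\Gamma_{L}$, a periodic $\eta$-pseudo-orbit of $A_{L}$ through it; lifting $n$ of its steps to $N$ starting at $w$ produces an $\eta$-pseudo-orbit of $\overline{A}$ from $w$ to $w\gamma$ for some $\gamma\in\Gamma_{L}$. Applying the homeomorphism $\overline{h}^{-1}$ and using its uniform continuity turns this into an $\epsilon(\eta)$-pseudo-orbit of $\overline{f}$ from $\tilde{z}=\overline{h}^{-1}(w)$ to $\overline{h}^{-1}(w\gamma)$ with $\epsilon(\eta)\to 0$ as $\eta\to 0$; moreover $D(\tilde{z}\gamma,\overline{h}^{-1}(w\gamma))<\epsilon'(\delta_{0})$ with $\epsilon'(\delta_{0})\to 0$ as $\delta_{0}\to 0$, because $\overline{h}(\tilde{z}\gamma)$ is $\delta_{0}$-close to $\overline{h}(\tilde{z})\gamma=w\gamma$. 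Replacing the endpoint by $\tilde{z}\gamma$ and projecting by the distance-nonincreasing covering map $\pi$ (which identifies $\tilde{z}$ and $\tilde{z}\gamma$) gives a periodic pseudo-orbit of $f$ through $z$ of jump at most $\epsilon(\eta)+\epsilon'(\delta_{0})$. Choosing $\eta,\delta_{0}$ so this is less than $\delta$ shows $z\in\Omega(f)$; as $z$ was arbitrary, $\Omega(f)=N/\Gamma$.

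\emph{Expanding case.} If $f$ is a topological expanding map, then by Gromov's Theorem \ref{thm} together with the uniqueness of the homotopic nil-endomorphism, $f$ is topologically conjugate to $A$, and $A$ is then an expanding nil-endomorphism; as above it preserves the full-support Haar measure, so $\Omega(A)=N/\Gamma$, and the conjugacy gives $\Omega(f)=N/\Gamma$.

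\emph{Main difficulty.} The weight of the argument is in the non-expanding transfer. The semiconjugacy $\overline{h}$ is only \emph{quasi}-equivariant for $\Gamma$ (Lemma \ref{lm Su 2 Lemma 2.4}(1)), it need not descend to a self-map of $N/\Gamma$, and it lies only at bounded---not small---distance from the identity; so transferring recurrence of $A$ at $\overline{h}(\tilde{z})$ naively would place the resulting recurrence of $f$ only at bounded distance from $z$, a merely coarse conclusion. The remedy is precisely the refinement in parts (2)--(4) of Lemma \ref{lm Su 2 Lemma 2.4}: on the nested finite-index subgroups $\overline{A}^{L}(\Gamma)$ the equivariance defect of $\overline{h}$ can be driven below any prescribed $\delta_{0}$, which lets the transferred return be closed up by a deck translation producing a genuinely $\delta$-small pseudo-loop of $f$ through $z$ itself. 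Carrying this out requires checking that the lifted model $A_{L}$ on $N/\overline{A}^{L}(\Gamma)$ still has full nonwandering set---true since it is again a hyperbolic nil-endomorphism---and keeping the shadowing and uniform-continuity moduli mutually compatible; isolating the genuinely expanding case, in which $\overline{h}$ need not be a homeomorphism, is the remaining point of care.
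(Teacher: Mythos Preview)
The paper does not prove this lemma: it is stated (twice, once in \S2 and once as Lemma~\ref{omega n gamma}) with only a citation to Sumi \cite{[Su 2]}, Lemma~5.4, and no argument is supplied. So there is no in-paper proof to compare your proposal against; your proposal would fill in what the paper simply imports.

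As for correctness, your non-expanding case is sound. You correctly use that $\overline{h}$ is then a $D$-biuniform homeomorphism (Lemma~\ref{Semiconjugacy}), pass to the finite cover $N/\overline{A}^{L}(\Gamma)$ so that Lemma~\ref{lm Su 2 Lemma 2.4}(2) makes the $\Gamma_L$-equivariance defect of $\overline{h}$ as small as you like, lift a periodic pseudo-orbit of the hyperbolic model $A_L$ to $N$, transport it through $\overline{h}^{-1}$, and close it up modulo $\Gamma_L\subseteq\Gamma$. Two implicit steps deserve a sentence each: from $\Omega(A_L)=N/\overline{A}^{L}(\Gamma)$ you need periodic $\eta$-pseudo-orbits through \emph{every} point, i.e.\ chain recurrence, which follows since nonwandering points are always chain recurrent; and your claim that $A_L$ preserves Haar measure rests on the identity $[\Gamma_L:\overline{A}(\Gamma_L)]=|\det D_e\overline{A}|$, which holds for lattices in simply connected nilpotent Lie groups. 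The order of choices ($\delta_0$, then $L$, then $\eta$ below the injectivity radius of $N/\Gamma_L$) is consistent.

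Your expanding case reaches the right conclusion but is imprecise in the middle: Gromov's theorem yields a conjugacy of $f$ to \emph{some} expanding affine infra-nil-endomorphism $B$, and ``uniqueness of the homotopic nil-endomorphism'' does not by itself force $B=A$, since the conjugating homeomorphism need not be homotopic to the identity. This does not damage the argument---any such $B$ preserves a full-support Haar measure, so $\Omega(B)$ is the whole manifold and the conjugacy carries this back to $f$---but the clause identifying $B$ with $A$ should be dropped or justified separately.
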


\begin{lemma}[\cite{[Ao-Hi]} Lemma 8.6.2] \label{Ao lm 8.6.2}
For $\epsilon>0$ there is $\delta>0$ such that if $D(x,y)<\delta, \; x,y\in N$ then $\overline{W}^s(x)\subset U_{\epsilon}(\overline{W}^s(y))$ and $\overline{W}^u(x;\textbf{e})\subset U_{\epsilon}(\overline{W}^u(y;\textbf{e}))$. Where for a set $S,\; U_{\epsilon}(S)=\{y\in N:D(y,S)<\epsilon\}$.
\end{lemma}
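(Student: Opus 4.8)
The plan is to transport the question, through the semiconjugacy $\overline h$ of Lemma~\ref{Semiconjugacy}, to the analogous statement for the $\overline A$-invariant sets $\overline L^s,\overline L^u$, and then to prove that ``linear'' version by using that each leaf $\overline L^\sigma(x)=x\,\overline L^\sigma(e)$ is a coset of a fixed subgroup of $N$. If $f$ is expanding the claim is elementary: then $\overline f$ is positively expansive, so $\overline W^s(x)=\{x\}$, and $\overline f^{-1}$ contracts $N$, so $\overline W^u(x;\textbf{e})=N$; we therefore assume $f$ is not expanding. By Lemma~\ref{Semiconjugacy}, $\overline h$ is then a homeomorphism of $N$ with $\overline h$ and $\overline h^{-1}$ both $D$-uniformly continuous. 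Using Remark~\ref{rm w l} and the injectivity of $\overline h$ we first check that $\overline W^s(x)=\overline h^{-1}(\overline L^s(\overline h(x)))$ and $\overline W^u(x;\textbf{e})=\overline h^{-1}(\overline L^u(\overline h(x)))$ for every $x\in N$: the inclusion ``$\subseteq$'' is Remark~\ref{rm w l}, and if $\overline h(z)$ lies on the leaf $\overline L^s(\overline h(x))$ then that leaf coincides with $\overline L^s(\overline h(z))$, so $\overline h(\overline W^s(z))=\overline h(\overline W^s(x))$ by Remark~\ref{rm w l}, whence $z\in\overline W^s(z)=\overline W^s(x)$. With these identities in hand, the lemma reduces, by a routine $\epsilon$--$\delta$ argument using the uniform continuity of $\overline h$ and of $\overline h^{-1}$, to the following linear statement: for every $\epsilon'>0$ there is $\delta'>0$ such that $D(p,q)<\delta'$ implies $\overline L^s(p)\subseteq U_{\epsilon'}(\overline L^s(q))$ and $\overline L^u(p)\subseteq U_{\epsilon'}(\overline L^u(q))$.

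To prove this we use that left translations are $D$-isometries carrying $\overline L^\sigma(x)=x\,\overline L^\sigma(e)$ to $\overline L^\sigma(gx)$, so after applying $L_{p^{-1}}$ we may assume $p=e$; thus it suffices to show $\overline L^s(e)\subseteq U_{\epsilon'}(c\,\overline L^s(e))$ when $D(e,c)$ is small. Put $G^\sigma=\overline L^\sigma(e)=\exp(E^\sigma_e)$. Since $[E^\lambda,E^\mu]\subseteq E^{\lambda\mu}$ for the generalized eigenspaces of the Lie-algebra automorphism $D_e\overline A$, the spaces $E^s_e,E^u_e$ are subalgebras, $G^s,G^u$ are closed connected subgroups, and the product map $G^u\times G^s\to N$ is a diffeomorphism. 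Writing $c=c_uc_s$ in these coordinates and using $c_s\in G^s$ we get $c\,G^s=c_uG^s$, so it is enough to $\epsilon'$-approximate each $v\in G^s$ by a point of $c_uG^s$. We carry this out by induction along the lower central series $N=N_1\supseteq\cdots\supseteq N_{k+1}=\{e\}$: each $N_j$ is $\overline A$-invariant, so the hyperbolic splitting and the subgroups $G^s,G^u$ descend to every nilpotent quotient $N/N_j$, on which $\overline A$ acts hyperbolically. On $N/N_2$, a Euclidean space with $\overline A$ acting linearly, the statement is the trivial fact that parallel affine subspaces at distance $<\delta'$ lie in one another's $\delta'$-neighborhood. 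Passing from $N/N_j$ to $N/N_{j+1}$, we lift the approximating point and correct it by an element of $G^s\cap(N_j/N_{j+1})$; the residual error lies in the central abelian subgroup $N_j/N_{j+1}$, is computed from the commutator pairing, and is pushed below $O(\epsilon')$ by the uniform contraction of $\overline A$ on the stable part of $N_j/N_{j+1}$. Finitely many steps give the statement on $N$, with $\delta'$ depending only on $\epsilon'$ and finitely many structure constants; the unstable case is the same argument applied to $\overline A^{-1}$.

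The genuinely delicate point is this inductive correction, which is exactly what does not arise in the toral setting of \cite{[Ao-Hi]}. Because $G^s$ and $G^u$ need not be normal in $N$, a point $v\in G^s$ lying far from $e$ on the non-compact leaf $G^s$ can, after left multiplication by the small unstable factor $c_u$, be sheared a long way from the coset $c_uG^s$; keeping that shear of size $O(\epsilon')$ \emph{uniformly} along all of $G^s$ is precisely what the lower-central-series induction, together with the contraction of $\overline A$ along the stable directions of each graded piece, has to accomplish, and carrying out those commutator estimates carefully is the bulk of the work. The remaining ingredients --- the identities of the first paragraph and the $\epsilon$--$\delta$ bookkeeping --- are routine given Lemma~\ref{Semiconjugacy} and Remark~\ref{rm w l}.
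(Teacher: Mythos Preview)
The paper does not give its own proof of this lemma; it simply quotes \cite{[Ao-Hi]}, Lemma~8.6.2, so there is no argument of the paper's to compare yours with. Your first half --- the reduction, via the bi-uniformly continuous homeomorphism $\overline h$ of Lemma~\ref{Semiconjugacy} and the identities $\overline W^\sigma(x)=\overline h^{-1}\bigl(\overline L^\sigma(\overline h(x))\bigr)$, to the analogous statement for the leaves $\overline L^s,\overline L^u$ of $\overline A$ --- is correct and is the natural route.

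The gap is in the second half. The ``linear'' claim you then set out to prove, that $D(p,q)<\delta'$ forces $\overline L^s(p)\subset U_{\epsilon'}(\overline L^s(q))$ with $\delta'$ depending only on $\epsilon'$, is \emph{false} for non-abelian $N$, so no induction along the lower central series can establish it. The error is precisely in the sentence ``is pushed below $O(\epsilon')$ by the uniform contraction of $\overline A$ on the stable part of $N_j/N_{j+1}$'': the residual commutator $[c_u,v]$ lands in $[N,N]$, and its component in the \emph{unstable} part of the central quotient cannot be corrected by anything in $G^s\cap N_j$; there is no contraction available in that direction. Concretely, let $N$ be the free $2$-step nilpotent Lie group on three generators and let $\overline A$ be induced by a hyperbolic $A_0\in SL_3(\mathbb Z)$ with real eigenvalues $|\lambda_1|>1>|\lambda_2|,|\lambda_3|$; then $\overline A$ is hyperbolic and preserves a lattice. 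If $e_1\in E^u_e$ and $e_2\in E^s_e$ are eigenvectors for $\lambda_1,\lambda_2$, then $[e_1,e_2]$ lies in the centre with eigenvalue $\lambda_1\lambda_2=\lambda_3^{-1}$ of modulus $>1$, hence in $E^u_e$. Taking $p=e$, $q=\exp(\delta' e_1)$ and $v=\exp(Te_2)\in G^s$, a Baker--Campbell--Hausdorff computation shows that every element of $v^{-1}q\,G^s$ carries an $[e_1,e_2]$-component of order $\delta' T$, which no choice of the $G^s$-factor can cancel; hence $D\bigl(v,\overline L^s(q)\bigr)\to\infty$ as $T\to\infty$ and $\overline L^s(e)\not\subset U_{\epsilon'}(\overline L^s(q))$ for any $\epsilon'$. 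The shear you correctly single out as ``the genuinely delicate point'' is therefore not merely delicate --- it is an actual obstruction, and the global statement you are trying to prove does not hold on a general simply connected nilpotent $N$.
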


\section{Proof of Main Theorem}
In this section we suppose that $f:N/\Gamma \rightarrow N/\Gamma$ is a special $TA$-covering map of a nil-manifold which is not injective or expanding, and $A:N/\Gamma \rightarrow N/\Gamma$ is the unique nil-endomorphism homotopic to $f$.\\

\textbf{Sketch of proof.} By lemma \ref{Semiconjugacy}, there is a unique semiconjugacy $\overline{h}:N\rightarrow N$ between $\overline{f}$ and $\overline{A}$, such that by proposition \ref{lm su1 9}.(3), $\overline{h}(v\gamma)=\overline{h}(v)\gamma$, for each $\gamma \in \overline{W}^u(e;\textbf{e})\cap\Gamma$ and $v\in\overline{W}^u(e;\textbf{e})$. Through proposition \ref{properties of h'} to proposition \ref{final lemma} we show that for all $\gamma\in\Gamma$ and $x\in N$, $\overline{h}(x \gamma)=\overline{h}(x)\gamma$. Based on this result,  $\overline{h}$ induces a homeomorphism $h:N/\Gamma\rightarrow N/\Gamma$ which is the conjugacy between $f$ and $A$.\\

To prove the main theorem we need some consequential lemmas and propositions.

\begin{lemma} \label{lm x in}
The following statements hold:
\begin{enumerate}
\item[(1)]Let $D$ be the metric of $N$ as above. for each $x\in N$, $D(x^{-1},e)=D(x,e)$.
\item[(2)]If $x\in\overline{W}^u(e;\textbf{e})$, then $\overline{W}^u(x;\textbf{e})=\overline{W}^u(e;\textbf{e})$.
\item[(3)]If $x\in\overline{L}^u(e)$, then $\overline{L}^u(x)=\overline{L}^u(e)$.
\end{enumerate}
\end{lemma}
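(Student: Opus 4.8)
The plan is to treat the three statements in the order listed. Part~(1) is a one-line consequence of left-invariance of $D$: applying the left translation by $x$ to both arguments and using that a metric is symmetric, $D(x^{-1},e)=D(xx^{-1},xe)=D(e,x)=D(x,e)$; only left-invariance, not $\Gamma$-invariance, is used. Parts~(2) and~(3) will both follow from the elementary observation that, for a homeomorphism $g$ of $N$, the relation $y\sim z\iff D(g^{i}(y),g^{i}(z))\to 0$ as $i\to-\infty$ is an equivalence relation on $N$ (reflexive trivially, symmetric since $D$ is symmetric, transitive by the triangle inequality). Hence its classes are pairwise equal or disjoint, and since $x$ always lies in its own class and, by hypothesis, in the class of $e$, those two classes must coincide.

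For~(2) I would first recall that the lift $\overline{f}:N\to N$ is a homeomorphism — being a lift to the universal cover of a self-covering map — so that $\overline{f}^{i}$ is defined for every $i\in\mathbb{Z}$ and $\overline{f}^{i}(e)=e$, where $e$ is the fixed point normalized to the identity in the standing setup (Lemma~\ref{fixedpoint}). With the definition $\overline{W}^u(z;\textbf{e})=\{y\in N:D(\overline{f}^{i}(y),\overline{f}^{i}(z))\to 0\ (i\to-\infty)\}$, the equivalence class of $z$ under the relation above (with $g=\overline{f}$) is exactly $\overline{W}^u(z;\textbf{e})$, so the hypothesis $x\in\overline{W}^u(e;\textbf{e})$ gives $\overline{W}^u(x;\textbf{e})=\overline{W}^u(e;\textbf{e})$. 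Unwound, this is just: if $y\in\overline{W}^u(x;\textbf{e})$ then $D(\overline{f}^{i}(y),\overline{f}^{i}(x))\to 0$ and, by hypothesis, $D(\overline{f}^{i}(x),\overline{f}^{i}(e))\to 0$, hence $D(\overline{f}^{i}(y),\overline{f}^{i}(e))\to 0$ and $y\in\overline{W}^u(e;\textbf{e})$; the reverse inclusion is symmetric.

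Part~(3) is formally identical with $\overline{A}$ in place of $\overline{f}$ (note $\overline{A}(e)=e$ automatically, since $\overline{A}$ is an automorphism), using the characterization $\overline{L}^u(z)=\{y\in N:D(\overline{A}^{i}(z),\overline{A}^{i}(y))\to 0\ (i\to-\infty)\}$ recalled just before Lemma~\ref{Semiconjugacy}: the equivalence class of $z$ under the relation with $g=\overline{A}$ is $\overline{L}^u(z)$, so $x\in\overline{L}^u(e)$ forces $\overline{L}^u(x)=\overline{L}^u(e)$. If a group-theoretic proof is preferred, one can instead use $\overline{L}^u(z)=z\cdot\overline{L}^u(e)$ with $\overline{L}^u(e)=\exp(E_e^u)$, note that $E_e^u$ is a Lie subalgebra (for $v,w\in E_e^u$ one has $D_e\overline{A}^{-n}[v,w]=[D_e\overline{A}^{-n}v,D_e\overline{A}^{-n}w]\to 0$ by continuity of the bracket, since $D_e\overline{A}^{-n}$ contracts $E_e^u$, so $[v,w]\in E_e^u$), whence $\overline{L}^u(e)=\exp(E_e^u)$ is a connected subgroup of $N$ containing $x$ and $\overline{L}^u(x)=x\cdot\overline{L}^u(e)=\overline{L}^u(e)$.

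I do not expect a real obstacle here — the content is entirely soft. The only points needing care are the standing facts that $\overline{f}$ and $\overline{A}$ are homeomorphisms fixing $e$, so that negative iterates and the two asymptotic relations make sense and $\textbf{e}$ is a legitimate reference orbit, and, should the algebraic route for~(3) be taken, the routine verification that $E_e^u$ is closed under the Lie bracket.
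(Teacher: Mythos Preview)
Your proposal is correct and matches the paper's proof essentially verbatim: part~(1) is the same left-invariance computation (the paper left-multiplies by $x^{-1}$, you by $x$), and parts~(2)--(3) are exactly the triangle-inequality argument the paper gives, which you additionally package as an equivalence relation. The extra group-theoretic route for~(3) is not in the paper but is a fine alternative.
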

\begin{proof}
(1) \begin{align*}
D(x^{-1},e)&=D(x^{-1}e,x^{-1}x)\\ \text{($D$ is left invariant)} \quad &=D(e,x)\\&=D(x,e)
\end{align*}
(2) Since $x\in\overline{W}^u(e;\textbf{e})$, we have $D(\overline{f}^i(x),\overline{f}^i(e))\rightarrow 0$ as $i\rightarrow -\infty$. Let $y\in\overline{W}^u(e;\textbf{e})$, then $D(\overline{f}^i(y),\overline{f}^i(e))\rightarrow 0$ as $i\rightarrow -\infty$. We have,
\begin{equation*}
D(\overline{f}^i(y),\overline{f}^i(x))<D(\overline{f}^i(y),\overline{f}^i(e))+D(\overline{f}^i(e),\overline{f}^i(x))\rightarrow 0 \text{ as } i\rightarrow -\infty.
\end{equation*}
So, $y\in \overline{W}^u(x;\textbf{e})$, i.e. $\overline{W}^u(e;\textbf{e})\subset\overline{W}^u(x;\textbf{e})$.
Conversely, if $y\in\overline{W}^u(x;\textbf{e})$ then $D(\overline{f}^i(y),\overline{f}^i(x))\rightarrow 0$ as $i\rightarrow -\infty$, and
\begin{equation*}
D(\overline{f}^i(y),\overline{f}^i(e))<D(\overline{f}^i(y),\overline{f}^i(x))+D(\overline{f}^i(x),\overline{f}^i(e))\rightarrow 0 \text{ as } i\rightarrow -\infty.
\end{equation*}
So, $y\in \overline{W}^u(e;\textbf{e})$, i.e. $\overline{W}^u(x;\textbf{e})\subset\overline{W}^u(e;\textbf{e})$.\\
(3) Its proof is the same as part (2).
\end{proof}

For simplicity, let $\Gamma_{\overline{f}}=\overline{W}^u(e;\textbf{e})\cap\Gamma$ and $\Gamma_{\overline{A}}=\overline{L}^u(e)\cap\Gamma$.

\begin{prop}\label{lm su1 9}
The following statements hold:
\begin{enumerate}
\item[(1)]$\Gamma_{\overline{A}}$ and $\Gamma_{\overline{f}}$ are subgroups of $\Gamma$.
\item[(2)]$\Gamma_{\overline{f}} \subset  \Gamma_{\overline{A}}$.
\item[(3)]$\overline{h}(v\gamma)=\overline{h}(v)\gamma$, for each $\gamma\in\Gamma_{\overline{f}}$ and $v\in\overline{W}^u(e;\textbf{e})$.
\item[(4)] If $\overline{W}^u(\gamma_1;\textbf{e})=\overline{W}^u(\gamma_2;\textbf{e})$, for some $\gamma_1,\gamma_2\in\Gamma$, then we have
\begin{equation*}
\overline{h}(x \gamma_1^{-1})\gamma_1=\overline{h}(x \gamma_2^{-1})\gamma_2, \text{ for } x\in \overline{W}^u(\gamma_1;\textbf{e}).
\end{equation*}
\end{enumerate}
\end{prop}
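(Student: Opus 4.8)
\emph{Proof proposal.} Part (1) should be purely formal, via the leaf-identification statements of Lemma \ref{lm x in} and the $\Gamma$-equivariance of leaves. If $\gamma_1,\gamma_2\in\Gamma_{\overline f}$ then $\gamma_2\in\overline W^u(e;\textbf{e})$, so Lemma \ref{lm x in}(2) gives $\overline W^u(\gamma_2;\textbf{e})=\overline W^u(e;\textbf{e})$, while Lemma \ref{lm gamma in w}(2) gives $\overline W^u(e;\textbf{e})\gamma_2=\overline W^u(\gamma_2;\textbf{e})$; hence $\overline W^u(e;\textbf{e})\gamma_2=\overline W^u(e;\textbf{e})$, and since $\gamma_1,e\in\overline W^u(e;\textbf{e})$ this gives $\gamma_1\gamma_2,\gamma_2^{-1}\in\overline W^u(e;\textbf{e})$; as both lie in $\Gamma$, $\Gamma_{\overline f}$ is a subgroup. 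The identical argument with Lemma \ref{lm x in}(3) and Lemma \ref{lm gamma in l}(2) in place of their $\overline W^u$-analogues handles $\Gamma_{\overline A}$; alternatively, $E_e^u$ is a Lie subalgebra of $Lie(N)$ (a product of two eigenvalues of $D_e\overline A$ of modulus $>1$ again has modulus $>1$), so $\overline L^u(e)=\exp(E_e^u)$ is a subgroup of $N$ and $\Gamma_{\overline A}=\overline L^u(e)\cap\Gamma$ is an intersection of two subgroups.

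For (2) I would first record two normalizations. Since $\overline f(e)=e$, $\overline A\,\overline h(e)=\overline h\,\overline f(e)=\overline h(e)$, so $\overline h(e)$ is a fixed point of the hyperbolic automorphism $\overline A$ and hence $\overline h(e)=e$. And since $\overline f_*=\overline A_*$ on $\Gamma$ (Lemma \ref{fbar=Abar on gamma}) with $\overline A_*(\gamma)=\overline A(\gamma)$, the lifting relation $\overline f(x\gamma)=\overline f(x)\,\overline f_*(\gamma)$ becomes $\overline f(x\gamma)=\overline f(x)\,\overline A(\gamma)$ for $\gamma\in\Gamma$; taking $x=e$ and iterating, $\overline f^{n}(\gamma)=\overline A^{n}(\gamma)$ for all $\gamma\in\Gamma$ and $n\ge 0$. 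Now fix $\gamma\in\Gamma_{\overline f}$. By Remark \ref{rm w l} and $\overline h(e)=e$, $\overline h(\gamma)\in\overline h(\overline W^u(e;\textbf{e}))=\overline L^u(e)$, and Lemma \ref{lm Su 2 Lemma 2.4}(4) with $x=e$ gives $\overline h(\gamma)\in\overline L^s(\overline h(e)\gamma)=\overline L^s(\gamma)$. Since $\overline L^s(\gamma)\cap\overline L^u(\gamma)=\{\gamma\}$, the assertion $\gamma\in\overline L^u(e)$ of part (2) is equivalent to $\overline h(\gamma)=\gamma$.

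To prove $\overline h(\gamma)=\gamma$, write $\overline h(\gamma)=\gamma s$ with $s\in\overline L^s(e)$. Since $\overline h$ conjugates $\overline f|_{\overline W^u(e;\textbf{e})}$ to the automorphism $\overline A|_{\overline L^u(e)}$, the point $\gamma$ has a \emph{unique} backward orbit $(\gamma_{-n})_{n\ge 0}$ inside $\overline W^u(e;\textbf{e})$, and $\overline h(\gamma_{-n})=\overline A^{-n}\overline h(\gamma)\to e$ (because $\overline h(\gamma)\in\overline L^u(e)$) forces $\gamma_{-n}\to e$ by the uniform continuity of $\overline h^{-1}$. The plan is then to show that every $\gamma_{-n}$ still lies in $\Gamma$ --- equivalently $\pi(\gamma_{-1})=[e]$ --- so that $\overline f(\gamma_{-n})=\overline A(\gamma_{-n})$ identifies $\gamma_{-n}$ with $\overline A^{-n}(\gamma)$, whence $\overline A^{-n}(\gamma)\to e$, i.e. $\gamma\in\overline L^u(e)$ (equivalently, $\overline A^{-n}(s)$ then stays bounded, forcing $s=e$ since $\overline A^{-1}$ expands $\overline L^s(e)$). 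I expect this step --- upgrading ``$\gamma$ lies at bounded distance from $\overline L^u(e)$'' to ``$\gamma\in\overline L^u(e)$'' for $\gamma\in\Gamma$ --- to be the main obstacle; it should require combining the hyperbolic splitting with the discreteness of $\Gamma$ (say, via discreteness of the image of $\Gamma$ in the leaf space of $\overline W^u$).

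Parts (3) and (4) then follow quickly from (1) and (2). For (3), let $v\in\overline W^u(e;\textbf{e})$ and $\gamma\in\Gamma_{\overline f}$. By Lemma \ref{lm gamma in w}(2) and Lemma \ref{lm x in}(2), $v\gamma\in\overline W^u(e;\textbf{e})\gamma=\overline W^u(e;\textbf{e})$, so $\overline h(v\gamma)\in\overline L^u(e)$; also $\overline h(v)\in\overline L^u(e)$, and $\gamma\in\overline L^u(e)$ by part (2), so $\overline h(v)\gamma\in\overline L^u(e)$ and $\overline L^u(\overline h(v)\gamma)=\overline L^u(e)$ by Lemma \ref{lm x in}(3). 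Combined with $\overline h(v\gamma)\in\overline L^s(\overline h(v)\gamma)$ (Lemma \ref{lm Su 2 Lemma 2.4}(4)) and the fact that $\overline L^s(\overline h(v)\gamma)\cap\overline L^u(\overline h(v)\gamma)$ is a single point, this forces $\overline h(v\gamma)=\overline h(v)\gamma$. For (4), the hypothesis forces $\gamma_1\gamma_2^{-1}\in\overline W^u(e;\textbf{e})\cap\Gamma=\Gamma_{\overline f}$ (Lemma \ref{lm gamma in w}(2)); writing $x=v\gamma_1$ with $v=x\gamma_1^{-1}\in\overline W^u(\gamma_1;\textbf{e})\gamma_1^{-1}=\overline W^u(e;\textbf{e})$, part (3) gives $\overline h(x\gamma_2^{-1})\gamma_2=\overline h(v\,\gamma_1\gamma_2^{-1})\gamma_2=\overline h(v)\,\gamma_1\gamma_2^{-1}\gamma_2=\overline h(v)\gamma_1=\overline h(x\gamma_1^{-1})\gamma_1$.
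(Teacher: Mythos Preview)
Your treatment of parts (1), (3) and (4) is essentially the paper's: the same leaf identifications via Lemma \ref{lm x in} and Lemmas \ref{lm gamma in w}/\ref{lm gamma in l}, the same use of Lemma \ref{lm Su 2 Lemma 2.4}(4) together with $\overline{L}^s\cap\overline{L}^u=\{\text{point}\}$ in (3), and the same reduction of (4) to (3) via $\gamma_2\gamma_1^{-1}\in\Gamma_{\overline f}$.

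The gap is in part (2). Your plan --- pull $\gamma$ back along $\overline f$ inside $\overline W^u(e;\textbf{e})$ and argue the preimages $\gamma_{-n}$ stay in $\Gamma$ --- runs into exactly the obstacle you flag, and it is a real one: since $f$ is a non-injective covering, $\overline f_*=\overline A_*:\Gamma\to\Gamma$ is injective but not surjective, so $\overline f^{-1}(\gamma)$ lies in $\Gamma$ only when $\gamma\in\overline A_*(\Gamma)$, and there is no reason for an arbitrary $\gamma\in\Gamma_{\overline f}$ to lie in $\bigcap_n\overline A_*^n(\Gamma)$. Nothing in the hyperbolic splitting or in ``discreteness of the image of $\Gamma$ in the leaf space of $\overline W^u$'' supplies this; that leaf-space discreteness is in fact \emph{equivalent} to the inclusion $\Gamma_{\overline f}\subset\Gamma_{\overline A}$ you are trying to prove.

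The paper bypasses this entirely by exploiting part (1) rather than backward dynamics. If $\gamma\in\Gamma_{\overline f}$ and $\gamma\notin\overline L^u(e)$, then (since $\overline L^u(e)$ is a closed connected subgroup of the simply connected nilpotent $N$ and $\exp$ is a diffeomorphism) no power $\gamma^n$ ($n\neq 0$) lies in $\overline L^u(e)$ either. On the other hand, by part (1) every $\gamma^n$ lies in $\Gamma_{\overline f}\subset\overline W^u(e;\textbf{e})$, and Remark \ref{rm page 270 (8.5)} together with $\overline h(e)=e$ gives $\overline W^u(e;\textbf{e})\subset U_{\delta_K}(\overline L^u(e))$. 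Thus $\{\gamma^n\}_{n\in\mathbb Z}$ sits in a tube of fixed radius $\delta_K$ around the closed subgroup $\overline L^u(e)$ while avoiding it --- impossible, since for $\gamma\notin\overline L^u(e)$ the distance $D(\gamma^n,\overline L^u(e))$ is unbounded in $n$ (write $\gamma=\exp X$ with $X\notin E_e^u$; then $\gamma^n=\exp(nX)$ and the $E_e^s$-component of $nX$ grows linearly). This is both shorter and avoids the preimage problem; you should replace your backward-orbit argument with it.
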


\begin{proof}
(1) Let $\gamma_1,\gamma_2\in \Gamma_{\overline{A}}$. Since $\Gamma$ is a group we have $\gamma_1\gamma_2^{-1}\in\Gamma$.  Now consider that $\gamma_1,\gamma_2\in \overline{L}^u(e)$, since $A^i(e)=e$, for all $i$, then by definition,
\begin{align} \label{eq Dleft}
\lim_{i\rightarrow -\infty}D(A^i(\gamma_1),e)&=\lim_{i\rightarrow -\infty}D(A^i(\gamma_1),A^i(e))=0\nonumber \\
\lim_{i\rightarrow -\infty}D(A^i(\gamma_2),e)&=\lim_{i\rightarrow -\infty}D(A^i(\gamma_2),A^i(e))=0.
\end{align}
As $D$ is left invariant we have 
\begin{align*}
0\leq\lim_{i\rightarrow -\infty} D(A^i(\gamma_1 \gamma_2^{-1}),A^i(e))&=\lim_{i\rightarrow -\infty} D(A^i(\gamma_1)A^i(\gamma_2^{-1}),e)\\
&=\lim_{i\rightarrow -\infty} D(A^i(\gamma_1)A^{-i}(\gamma_2),A^i(\gamma_1)A^{-i}(\gamma_1))\\
\text{(D is left invariant)}\quad &=\lim_{i\rightarrow -\infty} D(A^{-i}(\gamma_2),A^{-i}(\gamma_1))\\
&\leq\lim_{i\rightarrow -\infty} D(A^{-i}(\gamma_2),e)+D(e,A^{-i}(\gamma_1))\\
\text{(Lemma \ref{lm x in}.(1) and \eqref{eq Dleft})}\quad &=\lim_{i\rightarrow -\infty} D(A^i(\gamma_2),e)+D(A^i(\gamma_1),e)=0
\end{align*}
Thus $\gamma_1\gamma_2^{-1}\in \overline{L}^u(e)$ and $\overline{L}^u(e)$ is a subgroup of $N$. So $\overline{L}^u(e)\cap\Gamma$ is a subgroup of $\Gamma$.

For the second part, Let $\gamma_1,\gamma_2\in \Gamma_{\overline{f}}$. Since $\Gamma$ is a group we have $\gamma_1\gamma_2^{-1}\in\Gamma$. Now consider that $\gamma_1,\gamma_2\in \overline{W}^u(e;\textbf{e})$. Then,
\begin{align*}
&\overline{W}^u(e;\textbf{e})\gamma_1\\ 
\text{(Lemma \ref{lm gamma in w})}\quad &=\overline{W}^u(e \gamma_1;\textbf{e})\\
&=\overline{W}^u(\gamma_1;\textbf{e})\\
\text{(Lemma \ref{lm x in}) (2)}\quad &=\overline{W}^u(e;\textbf{e}).
\end{align*}
Similarly, $\overline{W}^u(e;\textbf{e})\gamma_2=\overline{W}^u(e;\textbf{e})$. So we have $\overline{W}^u(e;\textbf{e})\gamma_1=\overline{W}^u(e;\textbf{e})\gamma_2$ and then $\gamma_1\gamma_2^{-1}\in \overline{W}^u(e;\textbf{e})$, and we have the result.

(2) Take $\gamma\in \Gamma_{\overline{f}}$, such that $\gamma\notin \Gamma_{\overline{A}}$. So, $\gamma \notin \overline{L}^u(e)$ and for each $n\in \mathbb{Z}, \; n\neq 0$, $\gamma^n \notin \overline{L}^u(e)$. On the other hand, by part (1), remark \ref{rm page 270 (8.5)} and the fact that $\overline{h}(e)=e$, for all $n\in \mathbb{Z}$, we have $\gamma^n\in \overline{W}^u(e;\textbf{e})\subset U_{\delta_K}(\overline{L}^u(e))$, which is impossible.

(3)Let $\gamma\in \Gamma_{\overline{f}}$ and $v\in\overline{W}^u(e;\textbf{e})$. We have
\begin{align*}
v \gamma & \in \overline{W}^u(e;\textbf{e})\gamma\\
\text{(Lemma \ref{lm gamma in w})}\quad &=\overline{W}^u(e \gamma;\textbf{e})\\
&=\overline{W}^u(\gamma;\textbf{e})\\
\text{(Lemma \ref{lm x in} (2))}\quad &=\overline{W}^u(e;\textbf{e}),
\end{align*}
so,
\begin{align*}
\overline{h}(v \gamma)&\in \overline{h}(\overline{W}^u(e;\textbf{e}))\\
\text{(Remark \ref{rm w l})}\quad &=\overline{L}^u(e).
\end{align*}
By part (2), $\gamma\in \Gamma_{\overline{A}}$, Thus
\begin{align*}
\overline{h}(v) \gamma &\in \overline{h}(\overline{W}^u(e;\textbf{e})) \gamma\\
\text{(Remark \ref{rm w l})}\quad &=\overline{L}^u(e)\gamma\\
\text{(Lemma \ref{lm gamma in l})}\quad &=\overline{L}^u(e \gamma)\\
&=\overline{L}^u(\gamma)\\
\text{(Lemma \ref{lm x in} (3))}\quad &=\overline{L}^u(e).
\end{align*}
Again by Lemma \ref{lm x in} (3) and last part of the above relation, $\overline{L}^u(\overline{h}(v)\gamma)=\overline{L}^u(e)$, and 
\begin{equation*}
\overline{h}(v\gamma)\in\overline{L}^u(e)=\overline{L}^u(\overline{h}(v)\gamma).
\end{equation*}
On the other hand, by part (4) of lemma \ref{lm Su 2 Lemma 2.4}, $\overline{h}(v\gamma)\in \overline{L}^s(\overline{h}(v)\gamma)$. Since $\overline{L}^u(\overline{h}(v)\gamma)\cap\overline{L}^s(\overline{h}(v)\gamma)=\{\overline{h}(v)\gamma\}$ (see \cite{[Su 2]} Lemma 2.1), then $\overline{h}(v\gamma)=\overline{h}(v)\gamma$.

(4) Let $x\in \overline{W}^u(\gamma_1;\textbf{e})=\overline{W}^u(\gamma_2;\textbf{e})$. For $\gamma_1,\gamma_2 \in \Gamma$, we have $\gamma_2\in \overline{W}^u(\gamma_2;\textbf{e})= \overline{W}^u(\gamma_1;\textbf{e})=\overline{W}^u(e;\textbf{e})\gamma_1$. Thus, $\gamma_2\gamma_1^{-1}\in \overline{W}^u(e;\textbf{e})$, and then $\gamma_2\gamma_1^{-1}\in \Gamma_{\overline{f}}$. Similarly, $x\gamma_1^{-1},x\gamma_2^{-1}\in \overline{W}^u(e;\textbf{e})$. Now, by part (3),
\begin{align*}
\overline{h}(x\gamma_1^{-1})\gamma_1&=\overline{h}(x\gamma_2^{-1}\gamma_2\gamma_1^{-1})\gamma_1\\
&=\overline{h}(x\gamma_2^{-1})\gamma_2\gamma_1^{-1}\gamma_1\\
&=\overline{h}(x\gamma_2^{-1})\gamma_2.
\end{align*}
\end{proof}

According to part (4) of proposition \ref{lm su1 9}, we can define a map $\overline{h}':\bigcup_{\gamma\in \Gamma}\overline{W}^u(\gamma;\textbf{e})\rightarrow \bigcup_{\gamma\in \Gamma}\overline{L}^u(\gamma)$, by
\begin{equation*}
\overline{h}'(x)=\overline{h}(x\gamma^{-1})\gamma\quad x\in \overline{W}^u(\gamma;\textbf{e})\: (\gamma\in\Gamma).
\end{equation*}

Next lemma shows some properties of $\overline{h}'$:
\begin{prop}\label{properties of h'}
The following statements hold:
\begin{enumerate}
\item[(1)]$\overline{A}\circ\overline{h}'=\overline{h}'\circ\overline{f}$ on $\bigcup_{\gamma\in \Gamma}\overline{W}^u(\gamma;\textbf{e})$,
\item[(2)]$D(\overline{h}',id_{|\bigcup_{\gamma\in \Gamma}\overline{W}^u(\gamma;\textbf{e})})<\infty$,
\item[(3)]$\overline{h}'(\gamma)=\gamma$ for $\gamma\in\Gamma$,
\item[(4)]if $x\in \overline{W}^u(\gamma;\textbf{e})\: (\gamma\in\Gamma)$, then $\overline{h}'(x)\in\overline{L}^u(\gamma)$ and $\overline{h}'(x)\in\overline{L}^s(\overline{h}(x))$,
\item[(5)]if $y\in\overline{W}^s(x)$ for $x,y\in\bigcup_{\gamma\in \Gamma}\overline{W}^u(\gamma;\textbf{e})$, then $\overline{h}'(y)\in\overline{L}^s(\overline{h}'(x))$.
\end{enumerate}
\end{prop}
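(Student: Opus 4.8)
The plan is to verify the five properties of $\overline{h}'$ essentially by unwinding the definition $\overline{h}'(x)=\overline{h}(x\gamma^{-1})\gamma$ for $x\in\overline{W}^u(\gamma;\textbf{e})$ and transporting the known properties of $\overline{h}$ (from Lemma \ref{Semiconjugacy} and Lemma \ref{lm Su 2 Lemma 2.4}) along the right $\Gamma$-translations, using that these translations are $D$-isometries and that they permute the stable/unstable leaves (Lemma \ref{lm gamma in w}, Lemma \ref{lm gamma in l}). Throughout I would keep in mind that $\overline{h}'$ is well defined precisely by Proposition \ref{lm su1 9}.(4), so for any particular $x$ I am free to pick a convenient $\gamma\in\Gamma$ with $x\in\overline{W}^u(\gamma;\textbf{e})$.

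For (1): take $x\in\overline{W}^u(\gamma;\textbf{e})$. Since $\overline{f}$ commutes with the $\Gamma$-action (it is a lift) and $\overline{f}\big(\overline{W}^u(\gamma;\textbf{e})\big)\subset\overline{W}^u(\overline{f}(\gamma);\textbf{e})$ with $\overline{f}(\gamma)=\overline{f}_*(\gamma)\in\Gamma$, I can compute $\overline{h}'(\overline{f}(x))=\overline{h}\big(\overline{f}(x)\overline{f}_*(\gamma)^{-1}\big)\overline{f}_*(\gamma)=\overline{h}\big(\overline{f}(x\gamma^{-1})\big)\overline{f}_*(\gamma)$, then apply $\overline{A}\circ\overline{h}=\overline{h}\circ\overline{f}$ and $\overline{A}(\gamma)=\overline{A}_*(\gamma)=\overline{f}_*(\gamma)$ (the latter by Lemma \ref{fbar=Abar on gamma}) to get $\overline{A}(\overline{h}(x\gamma^{-1}))\overline{A}_*(\gamma)=\overline{A}\big(\overline{h}(x\gamma^{-1})\gamma\big)=\overline{A}(\overline{h}'(x))$. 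For (2): $D(\overline{h}'(x),x)=D(\overline{h}(x\gamma^{-1})\gamma,\,x\gamma^{-1}\gamma)=D(\overline{h}(x\gamma^{-1}),x\gamma^{-1})$ by right-$\Gamma$-invariance of $D$, which is bounded uniformly by the finiteness of $D(\overline{h},id_N)$ in Lemma \ref{Semiconjugacy}. Property (3) is immediate: for $\gamma\in\Gamma$ we have $\gamma\in\overline{W}^u(\gamma;\textbf{e})$, so $\overline{h}'(\gamma)=\overline{h}(\gamma\gamma^{-1})\gamma=\overline{h}(e)\gamma=\gamma$ since $\overline{h}(e)=e$.

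For (4): the inclusion $\overline{h}'(x)\in\overline{L}^u(\gamma)$ follows because $\overline{h}(x\gamma^{-1})\in\overline{h}(\overline{W}^u(e;\textbf{e}))=\overline{L}^u(e)$ by Remark \ref{rm w l} (using $x\gamma^{-1}\in\overline{W}^u(e;\textbf{e})$ from Lemma \ref{lm gamma in w} and Lemma \ref{lm x in}.(2)), and then right-translating by $\gamma$ and invoking Lemma \ref{lm gamma in l}. The second inclusion $\overline{h}'(x)\in\overline{L}^s(\overline{h}(x))$ is the one I expect to need the most care: I would start from Lemma \ref{lm Su 2 Lemma 2.4}.(4), which gives $\overline{h}(x\gamma^{-1})\in\overline{L}^s(\overline{h}(x\gamma^{-1}\cdot\text{something}))$ — more precisely I want to relate $\overline{h}(x\gamma^{-1})\gamma$ and $\overline{h}(x)$ via the stable leaf, so I would apply part (4) of that lemma in the form $\overline{h}((x\gamma^{-1})\gamma)\in\overline{L}^s(\overline{h}(x\gamma^{-1})\gamma)$, i.e. $\overline{h}(x)\in\overline{L}^s(\overline{h}'(x))$; since $\overline{L}^s$ leaves partition $N$, this is equivalent to $\overline{h}'(x)\in\overline{L}^s(\overline{h}(x))$. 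Finally (5): if $y\in\overline{W}^s(x)$ with $x\in\overline{W}^u(\gamma_1;\textbf{e})$, then $y\in\overline{W}^u(\gamma_2;\textbf{e})$ for some $\gamma_2\in\Gamma$; using $\overline{h}(\overline{W}^s(z))=\overline{L}^s(\overline{h}(z))$ (Remark \ref{rm w l}) one gets $\overline{h}(y)\in\overline{L}^s(\overline{h}(x))$, and combining with part (4) applied to both $x$ and $y$ — namely $\overline{h}'(x)\in\overline{L}^s(\overline{h}(x))$ and $\overline{h}'(y)\in\overline{L}^s(\overline{h}(y))=\overline{L}^s(\overline{h}(x))$ — places $\overline{h}'(x)$ and $\overline{h}'(y)$ on the same stable leaf, which is exactly the claim. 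The main obstacle will be bookkeeping in (4)–(5): making sure the $\gamma$ chosen for each point is handled consistently, and correctly using that the leaves $\overline{L}^s,\overline{L}^u$ and $\overline{W}^u$ form partitions so that membership statements can be converted into equalities of leaves.
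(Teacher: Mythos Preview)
Your proposal is correct and follows essentially the same route as the paper: each of (1)--(5) is obtained by unwinding the definition $\overline{h}'(x)=\overline{h}(x\gamma^{-1})\gamma$, using the $\Gamma$-invariance of $D$, Lemmas~\ref{lm gamma in w} and~\ref{lm gamma in l}, Remark~\ref{rm w l}, and Lemma~\ref{lm Su 2 Lemma 2.4}(4), exactly as in the paper. In part (1) your computation via $\overline{f}(x\gamma^{-1})=\overline{f}(x)\,\overline{f}_*(\gamma)^{-1}$ and the automorphism property $\overline{A}(ab)=\overline{A}(a)\overline{A}(b)$ is in fact a little cleaner than the paper's, which at one step cites Proposition~\ref{lm su1 9}(3) for a $\gamma$ that is not a priori in $\Gamma_{\overline f}$; your direct argument sidesteps that issue.
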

\begin{proof}
(1) Suppose that $x\in\overline{W}^u(\gamma;\textbf{e})=\overline{W}^u(e;\textbf{e})\gamma$, for some $\gamma\in\Gamma$. Then
\begin{equation}\label{eq inverse gamma}
x\gamma^{-1}\in \overline{W}^u(e;\textbf{e}).
\end{equation}
By \cite{[Ao-Hi]} page 205, we have $\overline{f}\big(\overline{W}^u(\gamma;\textbf{e})\big)=\overline{W}^u(\overline{f}(\gamma);\textbf{e})$. Here $\overline{f}(\gamma)$ means $\overline{f}_*(\gamma)$ which by lemma \ref{fbar=Abar on gamma} is equal to $\overline{A}_*(\gamma)$ and $\overline{A}_*(\gamma)\in\Gamma$. Therefore,
\begin{equation*}
\overline{f}(x)\in\overline{f}\big(\overline{W}^u(\gamma;\textbf{e})\big)=\overline{W}^u(\overline{A}_*(\gamma);\textbf{e})=\overline{W}^u(e;\textbf{e})\overline{A}_*(\gamma) 
\end{equation*}
so,
\begin{equation}\label{eq fbar x}
(\overline{f}(x))(\overline{A}_*(\gamma))^{-1}\in\overline{W}^u(e;\textbf{e}).
\end{equation}
Thus we have
\begin{align*}
\overline{A}\circ\overline{h}'(x)&=\overline{A}\big(\overline{h}(x\gamma^{-1})\gamma\big)\\
\text{(\eqref{eq inverse gamma} and proposition \ref{lm su1 9}.(3))}\quad &=\overline{A}\big(\overline{h}(x\gamma^{-1}\gamma)\big)\\
&=\overline{A}\circ \overline{h}(x)\\
\text{(lemma \ref{Semiconjugacy})}\quad &=\overline{h}\circ \overline{f}(x)\\
&=\overline{h}\bigg((\overline{f}(x))\big((\overline{A}_*(\gamma))^{-1}\big)(\overline{A}_*(\gamma))\bigg)\\
\text{(\eqref{eq fbar x} and proposition \ref{lm su1 9}.(3))}\quad &=\overline{h}\bigg((\overline{f}(x))\big((\overline{A}_*(\gamma))^{-1}\big)\bigg)(\overline{A}_*(\gamma))\\
&=\overline{h}'(\overline{f}(x))\\
&=\overline{h}'\circ\overline{f}(x).
\end{align*}

(2) Let $x\in\overline{W}^u(\gamma;\textbf{e})$, for some $\gamma\in\Gamma$, and let $\delta_K>0$ be satisfying $D(\overline{h},id_N)<\delta_K$. Then
\begin{align*}
D(\overline{h}'(x),x)&=D\big(\overline{h}(x\gamma^{-1})\gamma,x\big)\\ &=D\big(\overline{h}(x\gamma^{-1})\gamma,x\gamma^{-1}\gamma\big)\\
\text{(D is $\Gamma$-invariant)}\quad &=D\big(\overline{h}(x\gamma^{-1}),x\gamma^{-1}\big)\\
&<\delta_K.
\end{align*}

(3) For any $\gamma\in\Gamma$, by definition we have
\begin{equation*}
\overline{h}'(\gamma)=\overline{h}(\gamma\gamma^{-1})\gamma=\overline{h}(e)\gamma=e\gamma=\gamma.
\end{equation*}

(4) Let $x\in\overline{W}^u(\gamma;\textbf{e})$, for some $\gamma\in\Gamma$. We have
\begin{align*}
\overline{h}'(x)=\overline{h}(x\gamma^{-1})\gamma&\in \overline{h}(\overline{W}^u(\gamma;\textbf{e})\gamma^{-1})\gamma\\
\text{(lemma \ref{lm gamma in w})}\quad
&=\overline{h}(\overline{W}^u(e;\textbf{e})\gamma\gamma^{-1})\gamma\\ &=\overline{h}(\overline{W}^u(e;\textbf{e}))\gamma\\
\text{(remark \ref{rm w l})}\quad &=\overline{L}^u(e)\gamma\\
\text{(lemma \ref{lm gamma in l})}\quad
&=\overline{L}^u(\gamma),
\end{align*}
and
\begin{align*}
\overline{h}'(x)&=\overline{h}(x\gamma^{-1})\gamma\\
\text{(lemma \ref{lm Su 2 Lemma 2.4}.(4))}\quad
&\in \overline{L}^s(\overline{h}(x)\gamma^{-1})\gamma\\
\text{(lemma \ref{lm gamma in l}.(1))}\quad &=\overline{L}^s(\overline{h}(x))\gamma^{-1}\gamma\\
&=\overline{L}^s(\overline{h}(x)).
\end{align*}

(5) By the second part of proof of (4), we have
\begin{equation*}
\overline{L}^s(\overline{h}'(y))=\overline{L}^s\big(\overline{h}(y)\big)=\overline{h}(\overline{W}^s(y))=\overline{h}(\overline{W}^s(x))=\overline{L}^s\big(\overline{h}(x)\big)=\overline{L}^s(\overline{h}'(x)),
\end{equation*}
so, $\overline{h}'(y)\in \overline{L}^s(\overline{h}'(x))$.
\end{proof}

\begin{lemma}[\cite{[Su 2]} Lemma 7.6]
For each $u,v\in N$, $\overline{W}^u(u;\textbf{e})\cap\overline{W}^s(v)$ is the set of one point.
\end{lemma}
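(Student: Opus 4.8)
The plan is to deduce this from the analogous statement for the hyperbolic automorphism $\overline{A}$ by transporting the intersection through the semiconjugacy $\overline{h}$. Under the standing assumption of this section $f$ is not expanding, so Lemma~\ref{Semiconjugacy} guarantees that $\overline{h}:N\to N$ is a \emph{homeomorphism}, in particular a bijection, and that $\overline{A}\circ\overline{h}=\overline{h}\circ\overline{f}$. By Remark~\ref{rm w l}, $\overline{h}$ carries the stable and unstable sets of $\overline{f}$ \emph{exactly} onto the corresponding algebraic sets for $\overline{A}$:
\begin{equation*}
\overline{h}\big(\overline{W}^u(x;\textbf{e})\big)=\overline{L}^u(\overline{h}(x)),\qquad \overline{h}\big(\overline{W}^s(x)\big)=\overline{L}^s(\overline{h}(x))\qquad(x\in N).
\end{equation*}
On the linear side I would invoke the Hirsch-type lemma recalled in the preliminaries, which asserts that $\overline{L}^u(a)\cap\overline{L}^s(b)$ is a single point for all $a,b\in N$ (its unique point being $\beta(b,a)$ in the notation introduced there).

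Given $u,v\in N$, set $S=\overline{W}^u(u;\textbf{e})\cap\overline{W}^s(v)$. Since $\overline{h}$ is injective, it sends the intersection of two sets to the intersection of their images, so the two displayed equalities give
\begin{equation*}
\overline{h}(S)=\overline{h}\big(\overline{W}^u(u;\textbf{e})\big)\cap\overline{h}\big(\overline{W}^s(v)\big)=\overline{L}^u(\overline{h}(u))\cap\overline{L}^s(\overline{h}(v)),
\end{equation*}
and the right-hand side is a one-point set by the Hirsch lemma. As $\overline{h}$ is a bijection, $\overline{h}|_{S}$ maps $S$ bijectively onto this one-point set, so $S$ consists of exactly one point, namely $\overline{h}^{-1}\big(\beta(\overline{h}(v),\overline{h}(u))\big)$. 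This completes the argument.

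The one step I expect to carry the real weight is the \emph{equality} in Remark~\ref{rm w l} rather than merely an inclusion: the inclusions $\overline{h}(\overline{W}^s(x))\subseteq\overline{L}^s(\overline{h}(x))$ and $\overline{h}(\overline{W}^u(x;\textbf{e}))\subseteq\overline{L}^u(\overline{h}(x))$ are immediate from $\overline{A}\circ\overline{h}=\overline{h}\circ\overline{f}$ together with the uniform continuity of $\overline{h}$, whereas the reverse inclusions — which are precisely what force $S\neq\emptyset$ — rely on the surjectivity of $\overline{h}$ and constitute the substantive content recalled in that remark; everything else in the argument is formal set‑theoretic bookkeeping. (Alternatively one could produce the point of $S$ directly from the expansiveness and the pseudo orbit tracing property of $f$, but the route through $\overline{h}$ is considerably shorter given the machinery already assembled above.)
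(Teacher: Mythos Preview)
Your argument is correct, and it is precisely the mechanism the paper itself relies on: the paper states this lemma by citation to \cite{[Su 2]} without giving a proof, but the very next lemma (Lemma~\ref{su1 lm11}) uses the identity $\overline{h}\big(\overline{\iota}(u,v)\big)=\beta(\overline{h}(u),\overline{h}(v))$, which is exactly your formula $S=\{\overline{h}^{-1}(\beta(\overline{h}(v),\overline{h}(u)))\}$ read the other way. So your route through the bi-uniform homeomorphism $\overline{h}$, Remark~\ref{rm w l}, and the Hirsch lemma is the intended one, and your caveat about needing the full \emph{equality} in Remark~\ref{rm w l} (not just the easy inclusion) is well placed---that equality is indeed available here because $\overline{h}^{-1}$ is also $D$-uniformly continuous in the non-expanding case.
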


According to the above lemma, define $\overline{\iota}(u,v)=\overline{W}^u(u;\textbf{e})\cap\overline{W}^s(v)$.

\begin{lemma}\label{su1 lm11}
For $\epsilon>0$, there is $\delta>0$ such that
\begin{equation*}
D(u,v)<\delta\Rightarrow max\{D(\overline{\iota}(u,v),u),D(\overline{\iota}(u,v),v)\}<\epsilon\quad(u,v\in N)
\end{equation*}
\end{lemma}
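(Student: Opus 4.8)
Under the standing assumptions of this section $f$ is neither injective nor expanding, so Lemma \ref{Semiconjugacy} tells us that the semiconjugacy $\overline{h}:N\to N$ is a homeomorphism which is $D$-biuniformly continuous, and Remark \ref{rm w l} tells us it carries the $\overline{f}$-sets onto the $\overline{A}$-leaves: $\overline{h}(\overline{W}^u(x;\textbf{e}))=\overline{L}^u(\overline{h}(x))$ and $\overline{h}(\overline{W}^s(x))=\overline{L}^s(\overline{h}(x))$ for every $x\in N$. The plan is to transport the statement through $\overline{h}$ to the linear model $\overline{A}$, prove it there by an algebraic computation, and pull it back using biuniform continuity.

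First I would prove the linear counterpart: for every $\epsilon>0$ there is $\delta>0$ such that, whenever $p,q\in N$ satisfy $D(p,q)<\delta$, the unique point $z(p,q)$ of $\overline{L}^u(p)\cap\overline{L}^s(q)$ satisfies $D(z(p,q),p)<\epsilon$ and $D(z(p,q),q)<\epsilon$. Since $D_e\overline{A}$ is hyperbolic and $\overline{A}$ is a Lie-group automorphism, $E_e^s$ and $E_e^u$ are $D_e\overline{A}$-invariant subalgebras of $Lie(N)$ (the bracket of generalized eigenspaces for eigenvalues of modulus $<1$, resp. $>1$, lies in the generalized eigenspace of the product eigenvalue, which is again of modulus $<1$, resp. $>1$, the case of modulus $1$ being vacuous by hyperbolicity). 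Hence $\overline{L}^s(e)=\exp E_e^s$ and $\overline{L}^u(e)=\exp E_e^u$ are closed connected subgroups of the simply connected nilpotent group $N$, with $\overline{L}^\sigma(x)=x\,\overline{L}^\sigma(e)$, and left translation by $g\in N$ sends $\overline{L}^\sigma(x)$ onto $\overline{L}^\sigma(gx)$. The fact recalled above from \cite{[Hi]}, that $\overline{L}^s(x)\cap\overline{L}^u(y)$ is a single point for all $x,y\in N$, specialized to $x=e$ and to $y=e$ says that the two multiplication maps $\overline{L}^u(e)\times\overline{L}^s(e)\to N$ and $\overline{L}^s(e)\times\overline{L}^u(e)\to N$ are bijective; being smooth bijections between Euclidean spaces of equal dimension they are homeomorphisms (invariance of domain), so every element of $N$ has a unique factorization of each of the two types, with factors depending continuously on it and reducing to $e$ when it is $e$. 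From $z(gp,gq)=g\,z(p,q)$ and the left invariance of $D$ one gets $D(z(p,q),p)=D(z(e,p^{-1}q),e)$ and $D(z(p,q),q)=D(z(q^{-1}p,e),e)$; a direct check identifies $z(e,p^{-1}q)$ with the $\overline{L}^u(e)$-factor of $p^{-1}q$ and $z(q^{-1}p,e)$ with the $\overline{L}^s(e)$-factor of $q^{-1}p$. As $D(p,q)\to 0$ both $p^{-1}q$ and $q^{-1}p$ tend to $e$, so these factors tend to $e$ as well; this is the linear counterpart, and it is uniform in $p,q$ because the reduction by left invariance removed the base point.

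It then remains to chain the estimates. Given $\epsilon>0$, I would use uniform continuity of $\overline{h}^{-1}$ to pick $\epsilon_1>0$ with $D(a,b)<\epsilon_1\Rightarrow D(\overline{h}^{-1}(a),\overline{h}^{-1}(b))<\epsilon$, then the linear counterpart to pick $\delta_1>0$ (for $\epsilon_1$), then uniform continuity of $\overline{h}$ to pick $\delta>0$ with $D(u,v)<\delta\Rightarrow D(\overline{h}(u),\overline{h}(v))<\delta_1$. If now $D(u,v)<\delta$ and $w=\overline{\iota}(u,v)\in\overline{W}^u(u;\textbf{e})\cap\overline{W}^s(v)$, then by Remark \ref{rm w l}, $\overline{h}(w)\in\overline{L}^u(\overline{h}(u))\cap\overline{L}^s(\overline{h}(v))$, so $\overline{h}(w)=z(\overline{h}(u),\overline{h}(v))$ by uniqueness; since $D(\overline{h}(u),\overline{h}(v))<\delta_1$ this gives $D(\overline{h}(w),\overline{h}(u))<\epsilon_1$ and $D(\overline{h}(w),\overline{h}(v))<\epsilon_1$, and applying $\overline{h}^{-1}$ yields $D(w,u)<\epsilon$ and $D(w,v)<\epsilon$, which is the claim.

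The hard part will be the linear counterpart, i.e. establishing that the factorizations of $N$ along the subgroups $\overline{L}^s(e)$ and $\overline{L}^u(e)$ depend continuously on the point (equivalently, that the algebraic bracket of the hyperbolic model $\overline{A}$ is continuous near the diagonal, uniformly in the base point). Once that is available, the reduction by left invariance and the transfer through the biuniformly continuous homeomorphism $\overline{h}$ are routine.
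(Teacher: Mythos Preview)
Your proof is correct and follows the same strategy as the paper: transport the question to the hyperbolic automorphism $\overline{A}$ via the biuniformly continuous homeomorphism $\overline{h}$, use the identity $\overline{h}(\overline{\iota}(u,v))=\beta(\overline{h}(u),\overline{h}(v))$, and chain three $\epsilon$--$\delta$ estimates exactly as you do. The only difference is that where you supply a self-contained Lie-theoretic argument for the uniform continuity of $\beta$ near the diagonal (via the subalgebra property of $E_e^s,E_e^u$, the resulting global factorization $N=\overline{L}^u(e)\cdot\overline{L}^s(e)$, and reduction by left invariance), the paper simply invokes the local product structure of $\overline{A}$ from \cite{[Ao-Hi]} Theorem 6.6.5 or \cite{[Su 2]} Lemma 7.2.
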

\begin{proof}
Let $\epsilon>0$ be given. Since $\overline{h}$ is $D$-biuniformly contiuous there exists $\epsilon'>0$ such that
\begin{equation*}
D(x,y)<\epsilon'\Rightarrow D(\overline{h}^{-1}(x),\overline{h}^{-1}(y))<\epsilon\quad (x,y\in N).
\end{equation*}
By \cite{[Ao-Hi]} theorem 6.6.5 or \cite{[Su 2]} lemma 7.2, since $N$ is simply connected, $\overline{A}$ has local product structure (for definition and details, see \cite{[Ao-Hi]}), and then for $\epsilon>0$ there exists $\delta'>0$ such that
\begin{equation*}
D(u,v)<\delta'\Rightarrow max\{D(\beta(u,v),u),D(\beta(u,v),v)\}<\epsilon'\quad(u,v\in N)
\end{equation*}
Again since $\overline{h}$ is $D$-biuniformly continuous, there exists $\delta>0$ such that
\begin{equation*}
D(u,v)<\delta\Rightarrow D(\overline{h}(u),\overline{h}(v))<\delta'\quad (u,v\in N).
\end{equation*}
We know that $\overline{h}\big(\overline{\iota}(u,v)\big)=\beta(\overline{h}(u),\overline{h}(v))$ therefore
\begin{align*}
D(u,v)<\delta &\Rightarrow D(\overline{h}(u),\overline{h}(v))<\delta'\\
&\Rightarrow max\{D(\beta(\overline{h}(u),\overline{h}(v)),\overline{h}(u)),D(\beta(\overline{h}(u),\overline{h}(v)),\overline{h}(v))\}<\epsilon'\\
&\Rightarrow max\{D(\overline{h}(\overline{\iota}(u,v)),\overline{h}(u)),D(\overline{h}(\overline{\iota}(u,v)),\overline{h}(v))<\epsilon'\\
&\Rightarrow max\{D(\overline{\iota}(u,v),u),D(\overline{\iota}(u,v),v)\}<\epsilon.
\end{align*}
\end{proof}

\begin{prop}\label{su1 lm12}
$\overline{h}'$ is $D$-uniformly continuous.
\end{prop}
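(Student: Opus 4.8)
The plan is to imitate, for $\overline{h}'$, the scheme by which one proves that $\overline{h}$ itself is $D$-uniformly continuous: split the problem into uniform continuity ``along unstable leaves'' and uniform continuity ``along stable leaves''. The first of these is cheap. Fix $\gamma\in\Gamma$ and let $R_\gamma\colon N\to N$, $x\mapsto x\gamma$, denote right translation; since $D$ is left invariant and $\Gamma$-invariant and $\gamma\in\Gamma$, each $R_\gamma$ is a $D$-isometry, and by the very definition of $\overline{h}'$ one has $\overline{h}'|_{\overline{W}^u(\gamma;\textbf{e})}=R_\gamma\circ\overline{h}\circ R_{\gamma^{-1}}$. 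By Lemma~\ref{Semiconjugacy} ($\overline{h}$ is $D$-uniformly continuous) this shows that the restriction of $\overline{h}'$ to each leaf $\overline{W}^u(\gamma;\textbf{e})$ is $D$-uniformly continuous \emph{with a modulus of continuity independent of $\gamma$}, which handles any pair of points of the domain lying on a common leaf.

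For a pair $x,y$ in the domain with $D(x,y)$ small but lying on different leaves, I would introduce the bracket point $z=\overline{\iota}(x,y)=\overline{W}^u(x;\textbf{e})\cap\overline{W}^s(y)$. By Lemma~\ref{lm x in}(2) (and Lemma~\ref{lm gamma in w}) the leaf $\overline{W}^u(x;\textbf{e})$ is exactly the leaf of $x$, so $z$ lies on that leaf, hence in the domain; by Lemma~\ref{su1 lm11}, $D(z,x)$ and $D(z,y)$ are both small. The leafwise estimate of the previous paragraph then makes $D(\overline{h}'(x),\overline{h}'(z))$ small, so everything is reduced to the following ``stable'' estimate: \emph{if $z\in\overline{W}^s(y)$ and $D(z,y)$ is small (with $z,y$ in the domain) then $D(\overline{h}'(z),\overline{h}'(y))$ is small.} Here one uses Proposition~\ref{properties of h'}(4)--(5): $\overline{h}'(z)$ and $\overline{h}'(y)$ lie on one stable leaf $\overline{L}^s$. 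Transferring through the $D$-biuniformly continuous homeomorphism $\overline{h}$ and using that stable leaves of $\overline{A}$ are cosets of $\exp(E_e^s)$ on which $\overline{A}$ contracts uniformly, one first shows $D(\overline{f}^n z,\overline{f}^n y)\le\rho(D(z,y))$ for all $n\ge0$, where $\rho(t)\to0$ as $t\to0$. Since $z,y$ sit on unstable leaves labelled by elements of $\Gamma$, the points $\overline{f}^n z,\overline{f}^n y$ sit on unstable leaves labelled by elements of $\overline{A}_*^{\,n}(\Gamma)$, so Lemma~\ref{lm Su 2 Lemma 2.4}(2) makes $D(\overline{h}'(\overline{f}^n z),\overline{h}(\overline{f}^n z))$ and $D(\overline{h}'(\overline{f}^n y),\overline{h}(\overline{f}^n y))$ as small as we like for $n$ large; combined with the previous line and with $\overline{A}\circ\overline{h}'=\overline{h}'\circ\overline{f}$ (Proposition~\ref{properties of h'}(1)) this gives that $D(\overline{A}^n\overline{h}'(z),\overline{A}^n\overline{h}'(y))$ is small for $n$ large, together with $D(z,y)$; one then transports this smallness back to time $0$ using the expansivity of $\overline{A}$ (the two Lemmas of \cite{[Hi]} recalled above).

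The hard part is precisely this last stable estimate. A priori one knows only $D(\overline{h}',\mathrm{id}_N)<\delta_K$ (Proposition~\ref{properties of h'}(2)), a \emph{fixed} bound, so in the contracting direction the gap $\overline{h}'-\overline{h}$ does not become small by itself; genuine uniform continuity along stable leaves must be wrung out of the dynamics, and it is at this point that the hypotheses ``$f$ is not expanding'' (so that $\overline{h}$ is a homeomorphism, used to transfer the contraction to $\overline{L}^s$) and ``$f$ is special'' (already built into the fact that $\overline{h}'$ is well defined, via Proposition~\ref{lm su1 9}(4)) are essential. Once the stable estimate is in hand, combining it with the leafwise estimate through the bracket point $z$ yields $D(\overline{h}'(x),\overline{h}'(y))<\epsilon$ whenever $D(x,y)$ is small enough, which is the assertion.
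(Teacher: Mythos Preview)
Your decomposition via the bracket point $z=\overline{\iota}(x,y)$ is sound, and the leafwise estimate is exactly right: since $\overline{h}'|_{\overline{W}^u(\gamma;\textbf{e})}=R_\gamma\circ\overline{h}\circ R_{\gamma^{-1}}$ with $R_\gamma$ a $D$-isometry, uniform continuity along each unstable leaf, with a modulus independent of $\gamma$, is immediate from Lemma~\ref{Semiconjugacy}. The paper uses this same observation (see the estimate of $D(\overline{h}'(y),\overline{h}'(\overline{\iota}(y,x)))$ in its proof).

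The gap is in your ``stable estimate''. You correctly obtain that $D(\overline{A}^{n}\overline{h}'(z),\overline{A}^{n}\overline{h}'(y))$ is small for $n$ large (via Lemma~\ref{lm Su 2 Lemma 2.4}(2), since the labels land in $\overline{A}_*^{\,n}(\Gamma)$), but you cannot transport this back to time $0$ using the Hiraide expansivity lemma. That lemma requires a bound on $D(\overline{A}^{i}a,\overline{A}^{i}b)$ for \emph{all} $|i|\le J$. Here $a=\overline{h}'(z)$ and $b=\overline{h}'(y)$ lie on a common \emph{stable} leaf $\overline{L}^s$ (Proposition~\ref{properties of h'}(5)), so for $i<0$ the distance $D(\overline{A}^{i}a,\overline{A}^{i}b)$ blows up, and the hypothesis of the lemma is unavailable. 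Equivalently: on a stable leaf $\overline{A}^{n}$ contracts, so smallness of $D(\overline{A}^{n}a,\overline{A}^{n}b)$ for large $n$ says nothing about $D(a,b)$. Nor can you rescue the argument by iterating \emph{backward} in your framework, because the domain $\bigcup_{\gamma\in\Gamma}\overline{W}^u(\gamma;\textbf{e})$ is not $\overline{f}^{-1}$-invariant (one would need $\overline{A}_*^{-1}(\gamma)\in\Gamma$, which fails precisely when $f$ is not injective).

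The paper sidesteps this obstruction by arguing by contradiction and exploiting the group structure rather than forward dynamics. Assuming $D(\overline{h}'(x),\overline{h}'(y))>\epsilon_0$ with $D(x,y)$ arbitrarily small, the leafwise estimate forces $D(\overline{h}'(x),\overline{h}'(\overline{\iota}(y,x)))>\epsilon_0/2$; since these two points lie on the same $\overline{L}^s$-leaf but on $\overline{L}^u(\gamma_x)$ and $\overline{L}^u(\gamma_y)$ respectively (Proposition~\ref{properties of h'}(4)), the leaves $\overline{L}^u(\gamma_x)$ and $\overline{L}^u(\gamma_y)$ are uniformly separated. On the other hand, Lemma~\ref{Ao lm 8.6.2} makes $\overline{W}^u(\gamma_y;\textbf{e})$ sit in a thin tube around $\overline{W}^u(\gamma_x;\textbf{e})$. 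Setting $\gamma=\gamma_y\gamma_x^{-1}$ and iterating (using $\Gamma$-equivariance of the foliations), one gets $\gamma^{n}\in U_{2\delta_K}(\overline{L}^u(e))$ from the $\overline{W}^u$-closeness, but $\gamma^{n}\notin U_{2\delta_K}(\overline{L}^u(e))$ from the $\overline{L}^u$-separation --- a contradiction. This group-theoretic iteration is the missing mechanism replacing your attempted ``transport back''.
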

\begin{proof}
Suppose that the statement is false. So there is $\epsilon_0>0$, for all $\delta>0$, there are $x,y\in \bigcup_{\gamma\in \Gamma}\overline{W}^u(\gamma;\textbf{e})$ such that
\begin{equation}\label{eq1 lm12}
D(x,y)<\delta \text{ and } D(\overline{h}'(x),\overline{h}'(y))>\epsilon_0.
\end{equation}
By definition of $\overline{L}^{\sigma}(x)$ $(x\in N,\;\sigma=s,u)$, for $w\in\overline{L}^s(v)$ there is $\epsilon_1>0$ such that
\begin{equation}\label{eq2 lm12}
D(v,w)<\epsilon_0/2 \Rightarrow D(\overline{L}^u(v),\overline{L}^u(w))>\epsilon_1.
\end{equation}
Take $n>0$ and $\delta_1>0$ such that $\epsilon^n\geq 2\delta_K$ and $\delta_1^n\leq 2\delta_K$.

By Lemma \ref{Ao lm 8.6.2}, there exists $\delta_2>0$ such that
\begin{equation}\label{eq3 lm12}
D(v,w)<\delta_2 \Rightarrow \overline{W}^u(w,\textbf{e})\subset U_{\delta_1}\big(\overline{W}^u(v,\textbf{e})\big).
\end{equation}

Since $\overline{h}$ is continuous, take $\delta_3>0$ such that
\begin{equation}\label{eq4 lm12}
D(u,v)<\delta_3 \Rightarrow D(\overline{h}(u),\overline{h}(v))<\epsilon_0/2.
\end{equation}

By lemma \ref{su1 lm11}, there is $0<\delta<\delta_2$ such that
\begin{equation}\label{eq5 lm12}
D(x,y)<\delta \Rightarrow D(y\gamma_y^{-1},\overline{\iota}(y,x)\gamma_y^{-1})=
D(y,\overline{\iota}(y,x))<\delta_3\quad(x,y\in N).
\end{equation}

Now consider $x,y\in \bigcup_{\gamma\in \Gamma}\overline{W}^u(\gamma;\textbf{e})$ satisfy \eqref{eq1 lm12}. There exist $\gamma_x,\gamma_y\in \Gamma$ such that $x\in\overline{W}^u(\gamma_x,\textbf{e})$ and $y\in\overline{W}^u(\gamma_y,\textbf{e})$. We have

\begin{align} \label{eq6 lm12}
D\big(\overline{h}'(x),\overline{h}'(\overline{\iota}(y,x))\big) &\geq D\big(\overline{h}'(x),\overline{h}'(y)\big) - D\big(\overline{h}'(y),\overline{h}'(\overline{\iota}(y,x))\big) \nonumber \\
\text{(by \eqref{eq1 lm12})}\quad &\geq \epsilon_0 - D\big(\overline{h}(y\gamma_y^{-1})\gamma_y,\overline{h}(\overline{\iota}(y,x)\gamma_y^{-1})\gamma_y\big) \nonumber \\
\text{(D is $\Gamma$-invariant)}\quad &= \epsilon_0 - D\big(\overline{h}(y\gamma_y^{-1}),\overline{h}(\overline{\iota}(y,x)\gamma_y^{-1})\big) \nonumber \\
\text{(by \eqref{eq4 lm12} and \eqref{eq5 lm12})}\quad &> \epsilon_0 -\epsilon_0/2=\epsilon_0/2.
\end{align}
By proposition \ref{properties of h'}.(4)
\begin{align*}
x\in\overline{W}^u(\gamma_x,\textbf{e}) &\Rightarrow \overline{h}'(x)\in\overline{L}^u(\gamma_x),\\
\overline{\iota}(y,x)\in\overline{W}^u(\gamma_y,\textbf{e}) &\Rightarrow \overline{h}'(\overline{\iota}(y,x))\in\overline{L}^u(\gamma_y).
\end{align*}
Thus by proposition \ref{properties of h'}.(5), \eqref{eq6 lm12} and \eqref{eq2 lm12} we have
\begin{equation*}
D\big(\overline{L}^u(\gamma_x),\overline{L}^u(\gamma_y)\big)>\epsilon_1.
\end{equation*}

Suppose $\gamma=\gamma_y\gamma_x^{-1}$. We have
\begin{align}\label{eq7 lm12}
\gamma\gamma_x=\gamma_y \not\in U_{\epsilon_1}\big(\overline{L}^u(\gamma_x)\big) &\Rightarrow \gamma\not\in U_{\epsilon_1}\big(\overline{L}^u(\gamma_x\gamma_x^{-1})\big)=U_{\epsilon_1}\big(\overline{L}^u(e)\big) \nonumber \\
&\Rightarrow \gamma^n(e)\not\in U_{\epsilon_1^n}\big(\overline{L}^u(e)\big) \supset U_{2\delta_K}\big(\overline{L}^u(e)\big).
\end{align}
On the other hand,
\begin{align*}
\overline{W}^u(\gamma\gamma_x;\textbf{e})&=\overline{W}^u(\gamma_y;\textbf{e})\\
\text{($y\in\overline{W}^u(\gamma_y;\textbf{e})$)}\quad &=\overline{W}^u(y;\textbf{e})\\
\text{(by \eqref{eq3 lm12})}\quad &\subset U_{\delta_1}\big(\overline{W}^u(x;\textbf{e})\big)\\
\text{($x\in\overline{W}^u(\gamma_x;\textbf{e})$)}\quad &=U_{\delta_1}\big(\overline{W}^u(\gamma_x;\textbf{e})\big)\\
&=U_{\delta_1}\big(\overline{W}^u(e;\textbf{e})\gamma_x\big).
\end{align*}
Now we have
\begin{align}\label{eq8 lm12}
\overline{W}^u(\gamma\gamma_x;\textbf{e})\gamma_x^{-1}\subset U_{\delta_1}\big(\overline{W}^u(e;\textbf{e})\big) &\Rightarrow \overline{W}^u(\gamma\gamma_x\gamma_x^{-1};\textbf{e})\subset U_{\delta_1}\big(\overline{W}^u(e;\textbf{e})\big)\nonumber \\
&\Rightarrow \overline{W}^u(\gamma;\textbf{e})\subset U_{\delta_1}\big(\overline{W}^u(e;\textbf{e})\big) \nonumber \\
&\Rightarrow \overline{W}^u(\gamma^2;\textbf{e})\subset U_{\delta_1}\big(\overline{W}^u(\gamma;\textbf{e})\big) \nonumber \\
\text{(by induction)}\quad &\Rightarrow \overline{W}^u(\gamma^n;\textbf{e})\subset U_{\delta_1}\big(\overline{W}^u(\gamma^{n-1};\textbf{e})\big) \nonumber \\
&\Rightarrow \overline{W}^u(\gamma^n;\textbf{e})\subset U_{\delta_1^n}\big(\overline{W}^u(e;\textbf{e})\big) \subset U_{2\delta_K}\big(\overline{L}^u(e)\big) \nonumber \\
&\Rightarrow \gamma^n\in \overline{W}^u(\gamma^n;\textbf{e})\subset U_{2\delta_K}\big(\overline{L}^u(e)\big).
\end{align}
Finally, \eqref{eq7 lm12} and \eqref{eq8 lm12} make a contradiction.
\end{proof}

Let $\tilde{u}=(u_i)\in N_{\overline{f}}$ and for each $i\in\mathbb{Z}$, $\overline{f}_{u_i,u_{i+1}}$ be the lift of $f$ by $\pi$ such that $\overline{f}(u_i)=u_{i+1}$ and define
\begin{equation*}
\overline{f}^i_{\tilde{u}}=\left\{ \begin{array}{ll} \overline{f}_{u_{i-1},u_i}\circ\ldots\circ\overline{f}_{u_0,u_1} & \text{for } i>0, \\
											(\overline{f}_{u_i,u_{i+1}})^{-1}\circ\ldots\circ(\overline{f}_{u_{-1},u_0})^{-1} & \text{for } i<0, \\
											id_N  & \text{for } i=0.
						\end{array} \right.
\end{equation*}
We define a map $\tau_{\tilde{u}}=\tau_{\tilde{u}}^f:N\rightarrow(N/\Gamma)_f$ by
\begin{equation*}
\tau_{\tilde{u}}(x)=(\pi\circ\overline{f}_{\tilde{u}}^i(x))_{i=-\infty}^{\infty}\quad (x\in N).
\end{equation*}
Since $\overline{f}(e)=e$, then $\tau_{\textbf{e}}(e)=\tau_{\tilde{e}}(e)=(\pi(e))_{i=-\infty}^{\infty}$.\\

\begin{lemma}[\cite{[Ao-Hi]} Lemma 6.6.8 (1)] \label{Ao Lemma 6.6.8 (1)}
If $x\in X$ and $\tilde{u}\in N_f$ then $\pi(\overline{W}^u(x;\tilde{u}))=W^u(\tau_{\tilde{u}}(x))$.
\end{lemma}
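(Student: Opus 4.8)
The plan is to prove the two inclusions $\pi(\overline{W}^u(x;\tilde{u}))\subseteq W^u(\tau_{\tilde{u}}(x))$ and $W^u(\tau_{\tilde{u}}(x))\subseteq\pi(\overline{W}^u(x;\tilde{u}))$ separately, reading $\overline{W}^u(x;\tilde{u})=\{y\in N:D(\overline{f}^i_{\tilde{u}}(x),\overline{f}^i_{\tilde{u}}(y))\to 0\ (i\to-\infty)\}$. Two facts about $\pi:N\to N/\Gamma$ carry the whole argument. Since $D$ is left invariant and $\Gamma$-invariant it descends to the distance $d$ on $N/\Gamma$ and $\pi$ is a local isometry, so $d(\pi(a),\pi(b))\le D(a,b)$ for all $a,b\in N$; and since $\pi$ is a covering of the compact manifold $N/\Gamma$ there is a Lebesgue number $\delta_0>0$ such that every $\delta_0$-ball in $N/\Gamma$ is evenly covered with $\pi$ an isometry on each sheet over it, and — shrinking $\delta_0$ by uniform continuity of $f$ on $N/\Gamma$ — such that $f$ carries every $\delta_0$-ball into an evenly covered open set. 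I also use that each branch $\overline{f}_{u_i,u_{i+1}}$ is a homeomorphism of $N$ (a covering of the simply connected $N$) and a lift of $f$, so that each $\overline{f}^i_{\tilde{u}}$ is a homeomorphism of $N$ and $\pi\circ\overline{f}^{i+1}_{\tilde{u}}=f\circ\pi\circ\overline{f}^i_{\tilde{u}}$ — precisely the relation that makes $\tau_{\tilde{u}}(x)$ a point of $(N/\Gamma)_f$.

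For $\subseteq$: given $y\in\overline{W}^u(x;\tilde{u})$, take $\tilde{y}=\tau_{\tilde{u}}(y)\in(N/\Gamma)_f$; writing $\tilde{x}=\tau_{\tilde{u}}(x)$, one has $y_0=\pi(y)$, $x_{-i}=\pi\overline{f}^{-i}_{\tilde{u}}(x)$, $y_{-i}=\pi\overline{f}^{-i}_{\tilde{u}}(y)$, hence $d(x_{-i},y_{-i})\le D(\overline{f}^{-i}_{\tilde{u}}(x),\overline{f}^{-i}_{\tilde{u}}(y))\to 0$ as $i\to\infty$, so $\pi(y)\in W^u(\tau_{\tilde{u}}(x))$. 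For $\supseteq$: given $z_0\in W^u(\tau_{\tilde{u}}(x))$ with witness $\tilde{z}=(z_i)\in(N/\Gamma)_f$ satisfying $d(x_{-i},z_{-i})\to 0$ (where $x_{-i}=\pi\overline{f}^{-i}_{\tilde{u}}(x)$), fix $N_0$ with $d(x_{-i},z_{-i})<\delta_0$ for $i\ge N_0$ and let $\widehat{z}_{-i}$ be the unique lift of $z_{-i}$ with $D(\overline{f}^{-i}_{\tilde{u}}(x),\widehat{z}_{-i})<\delta_0$. These lifts are coherent along the branches defining $\tau_{\tilde{u}}$, i.e. $\overline{f}_{u_{-i},u_{-i+1}}(\widehat{z}_{-i})=\widehat{z}_{-i+1}$ for $i>N_0$, so pushing them forward yields a single point $y\in N$ with $\overline{f}^{-i}_{\tilde{u}}(y)=\widehat{z}_{-i}$ for all $i\ge N_0$; then $\pi\overline{f}^{-i}_{\tilde{u}}(y)=z_{-i}$ for $i\ge N_0$, and applying $f^{N_0}$ gives $\pi(y)=z_0$. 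Finally, for $i\ge N_0$ both $\overline{f}^{-i}_{\tilde{u}}(x)$ and $\overline{f}^{-i}_{\tilde{u}}(y)=\widehat{z}_{-i}$ lie in one $\delta_0$-sheet, on which $\pi$ is an isometry, so $D(\overline{f}^{-i}_{\tilde{u}}(x),\overline{f}^{-i}_{\tilde{u}}(y))=d(x_{-i},z_{-i})\to 0$; thus $y\in\overline{W}^u(x;\tilde{u})$ and $z_0=\pi(y)\in\pi(\overline{W}^u(x;\tilde{u}))$. Combining the two inclusions gives the lemma.

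The forward inclusion and the transfer of the limit $0$ through the local isometry are routine; I expect the only genuine point to be the coherence claim for $\supseteq$, namely that the sheet-by-sheet lifts $\widehat{z}_{-i}$ really are the backward orbit of one point $y\in N$ rather than an incoherent collection. The clean way to secure this is the preparatory shrinking of $\delta_0$ mentioned above: once $f$ carries $\delta_0$-balls into evenly covered sets, the unique-lifting property of the covering $\pi$ applies to each finite backward orbit segment $(z_{-i})_{N_0\le i\le m}$ of $\tilde{z}$ all at once, so the $\widehat{z}_{-i}$ are by construction the coordinates of that lift started at $\widehat{z}_{-N_0}$, the branch-compatibility $\overline{f}_{u_{-i},u_{-i+1}}(\widehat{z}_{-i})=\widehat{z}_{-i+1}$ is automatic, and the independence of $y$ from the choice of $N_0$ follows because two such lifts of overlapping segments must agree on the overlap. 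Everything else is a direct consequence of $\pi$ being a covering map and a local isometry.
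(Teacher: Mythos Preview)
The paper does not prove this lemma; it is quoted verbatim from Aoki--Hiraide \cite{[Ao-Hi]} and used as a black box. Your argument is the standard one and is essentially what one finds in that reference: the forward inclusion is immediate from $d(\pi(a),\pi(b))\le D(a,b)$, and the reverse inclusion is obtained by lifting the tail of the witnessing orbit $\tilde z$ sheet-by-sheet near the orbit $(\overline f^{-i}_{\tilde u}(x))$.

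The only place worth tightening is the coherence step, which you correctly single out. Your phrase ``$f$ carries every $\delta_0$-ball into an evenly covered open set'' is the right idea but a bit loose as stated, since you also need the two candidate lifts of $z_{-i+1}$ to land in the \emph{same} sheet. The clean quantifier order is: pick $\delta_1>0$ smaller than half the covering radius (so any two $\pi$-preimages of the same point at distance $<2\delta_1$ coincide), then use uniform continuity of $f$ --- hence of every lift $\overline f_{u_{-i},u_{-i+1}}$, since $\pi$ is a local isometry --- to get $\delta_0\le\delta_1$ with $D(a,b)<\delta_0\Rightarrow D(\overline f_{u_{-i},u_{-i+1}}(a),\overline f_{u_{-i},u_{-i+1}}(b))<\delta_1$. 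Then $\overline f_{u_{-i},u_{-i+1}}(\widehat z_{-i})$ and $\widehat z_{-i+1}$ are both lifts of $z_{-i+1}$ lying within $\delta_1$ of $\overline f^{-i+1}_{\tilde u}(x)$, hence within $2\delta_1$ of each other, hence equal. With this adjustment the argument is complete.
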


Let $X$ be a compact metric set and $f:X\rightarrow X$ a continuous surjection. A point $x\in X$ is said to be a \emph{nonwandering point} if for any neighborhood $U$ of $x$ there is an integer $n>0$ such that $f^n(U)\cap U\neq \emptyset$. The set $\Omega(f)$ of all nonwandering points is called the \emph{nonwandering set}. Clearly $\Omega(f)$ is closed in $X$ and invariant under $f$.

$f$ is said to be \emph{topologically transitive} (here $X$ may be not necessarily compact), if there is $x_0\in X$ such that the orbit $O^+(x_0)=\{f^i(x_0):i\in\mathbb{Z}^{\geq 0}\}$ is dense in $X$. It is easy to check that if $X$ is compact, a continuous surjection
$f:X\rightarrow X$ is topologically transitive if and only if for any $U,V$ nonempty open sets there is $n > 0$ such that $f^n(U)\cap V\neq\emptyset$.

A continuous surjection $f:X\rightarrow X$ of a metric space is \emph{topologically mixing} if for nonempty open sets $U,V$ there exists $N>0$ such that $f^n(U)\cap V\neq\emptyset$ for all $n>N$. Topological mixing implies topological transitivity.\\

For continuing, we need next theorem for which proof one can see \cite{[Ao-Hi]} Theorem 3.4.4.

\begin{theorem}[Topological decomposition theorem]
Let $f:X\rightarrow X$ be a continuous surjection of a compact metric space. If $f:X\rightarrow X$ is a $TA$-map, then the following properties hold:
\begin{enumerate}
\item[(1)](Spectral decomposition theorem due to Smale) The nonwandering set, $\Omega(f)$, contains a finite sequence $B_i\; (1\leq i\leq l)$ of $f$-invariant closed subsets such that
	\begin{enumerate}
	\item[(i)]$\Omega(f)=\bigcup_{i=1}^l B_i$ (disjoint union),
	\item[(ii)]$f_{|B_i}:B_i\rightarrow B_i$ is topologically transitive.
	\end{enumerate}
Such the subsets $B_i$ are called basic sets.
\item[(2)](Decomposition theorem due to Bowen) For $B$ a basic set there exist $a > 0$ and a finite sequence $C_i\; (0\leq i\leq a-1)$ of closed subsets such that
	\begin{enumerate}
	\item[(i)] $C_i \cap C_j=\emptyset\; (i \neq j),\; f(C_i)=C_{i+1}$ and  $f^a(C_i)=C_i$,
	\item[(ii)] $B=\bigcup_{i=0}^{a-1} C_i$,
	\item[(iii)] $f^a_{|C_i}:C_i\rightarrow C_i$ is topologically mixing,
	\end{enumerate}
Such the subsets $C_i$ are called elementary sets.
\end{enumerate}
\end{theorem}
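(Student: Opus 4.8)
The plan is to reproduce the classical Smale--Bowen decomposition in the topological ($TA$) category, working directly with $f$ and its pseudo-orbits. For part (1), introduce on $\Omega(f)$ the relation $x\sim y$ meaning that for every $\epsilon>0$ there is an $\epsilon$-pseudo-orbit of $f$ from $x$ to $y$ and one from $y$ to $x$. One first checks, using POTP, that $\Omega(f)$ equals the chain recurrent set $CR(f)$: a periodic $\epsilon$-pseudo-orbit of some period $m$ through $x\in CR(f)$ is $\epsilon$-traced by some $z$, which forces $f^{m}(z)$ back into a small ball about $x$ and hence $x\in\Omega(f)$. So $\sim$ is defined on all of $\Omega(f)$; it is reflexive there, symmetric, transitive by concatenation of chains, and has closed graph. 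Its equivalence classes $B_1,\dots$ are closed and $f$-invariant, each $f|_{B_i}$ is chain transitive by maximality, and POTP promotes chain transitivity to genuine topological transitivity by $\epsilon$-tracing a single long $\epsilon$-chain forced to pass through any prescribed finite list of open sets.

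The crux — and the step I expect to be the main obstacle — is finiteness of the family $\{B_i\}$, which rests on the local product structure of a $TA$-map. Concretely one proves that there are constants $\epsilon_0,\delta_0>0$ such that whenever $d(x,y)<\delta_0$ the local stable set of $x$ and a local unstable set of $y$ (the latter defined through the inverse limit, as in the definition of $W^u$) meet in exactly one point: existence by $\epsilon_0$-tracing the pseudo-orbit that follows a backward orbit branch of $x$ for negative times and the forward orbit of $y$ for nonnegative times, uniqueness by $c$-expansiveness. From this bracket one deduces that each basic set is open in $\Omega(f)$ — a nonwandering point lying $\delta_0$-close to $B_i$ brackets into $B_i$, hence lies in $B_i$ — so compactness of $\Omega(f)$ leaves only finitely many basic sets; the same bracket shows each $B_i$ is locally maximal, which is what legitimizes the transitivity argument above.

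For part (2), fix a basic set $B$; then $f|_B$ is a topologically transitive $TA$-map with local product structure, and the bracket together with transitivity give density of periodic points in $B$ (close up a near-return of a transitive orbit by bracketing and POTP, using $c$-expansiveness to make the resulting orbit genuinely periodic). Let $a$ be the greatest common divisor of the periods of periodic points of $f|_B$, a finite positive integer, and partition $B$ by assigning to $x,y$ the same label when a local unstable set through $x$ meets a local stable set through $f^{\,j}(y)$ for some $j$ with $a\mid j$; well-definedness of this $\mathbb{Z}/a\mathbb{Z}$-valued labelling is precisely the assertion that $a$ is the gcd of the periods. One then reads off that the resulting closed pieces $C_0,\dots,C_{a-1}$ satisfy $f(C_i)=C_{i+1}$, $f^{a}(C_i)=C_i$, $C_i\cap C_j=\emptyset$ for $i\neq j$, and $\bigcup_i C_i=B$. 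Finally $f^{a}|_{C_i}$ is topologically mixing: given open $U,V\subset C_i$, the return times of $f|_B$ from a small ball in $C_i$ to itself generate $a\mathbb{Z}$, so by a standard numerical-semigroup argument combined with transitivity there is, for all large $n$, an $\epsilon$-chain of length $an$ from $U$ to $V$, and tracing it by POTP yields $f^{an}(U)\cap V\neq\emptyset$. A fully detailed execution is carried out in \cite{[Ao-Hi]}, Theorem 3.4.4.
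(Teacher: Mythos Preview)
The paper does not give its own proof of this theorem at all; it states the result and refers the reader to \cite{[Ao-Hi]}, Theorem~3.4.4, for the proof. Your proposal is a faithful outline of precisely that classical argument (chain recurrence $=\Omega(f)$ via POTP, local product structure from $c$-expansiveness plus POTP, openness of the chain classes giving finiteness, density of periodic points, and the $\gcd$-of-periods cyclic decomposition yielding mixing), and you yourself cite the same source for the details, so there is no discrepancy to discuss.
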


\begin{lemma}[\cite{[Su 2]} Lemma 5.4] \label{omega n gamma}
$\Omega(f)=N/\Gamma$.
\end{lemma}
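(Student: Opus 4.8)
The plan is to reduce the statement to the corresponding fact for the hyperbolic nil-endomorphism $A$ and then to transport it through the inverse limit. The first step is to show $\Omega(A)=N/\Gamma$; equivalently, that the periodic points of $A$ are dense. Here one uses the rational (Malcev) structure attached to the lattice $\Gamma\subseteq N$: the lift $\overline{A}\in\mathrm{Aut}(N)$ carries the set $N_{\mathbb Q}$ of rational points into itself (because $\overline{A}(\Gamma)\subseteq\Gamma$), $N_{\mathbb Q}/\Gamma$ is dense in $N/\Gamma$, and it is an increasing union of finite $A$-invariant subsets graded by ``denominator''; on the pieces whose denominators are coprime to the index $d=[\Gamma:\overline{A}(\Gamma)]$ the map $A$ acts bijectively, so every such point is periodic. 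Letting the denominator grow through integers coprime to $d$ yields a dense set of periodic points, hence $\Omega(A)=N/\Gamma$. (For tori this is the familiar statement that $\mathrm{Fix}(A^n)$ consists of the rational points killed by $A^n-I$; alternatively the whole step may be quoted from the theory of Anosov nil-endomorphisms.)

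Second, I would pass to inverse limits. If $w\in N/\Gamma$ is periodic for $A$ of period $p$, it has a unique periodic pre-history $\tilde w=(A^{\,i\bmod p}(w))_{i\in\mathbb Z}\in(N/\Gamma)_A$, fixed by $\sigma_A^{\,p}$. Given an arbitrary $\tilde x\in(N/\Gamma)_A$ and a basic neighbourhood of it (prescribing the coordinates with $|i|\le M$ to within $\varepsilon$), pick $w$ periodic and close to $x_{-M}$; enlarging its period to a multiple exceeding $2M$ if necessary and re-indexing so that $w$ sits in coordinate $-M$, the periodic point $\tilde w$ falls inside that neighbourhood by uniform continuity of the finitely many maps $A,A^2,\dots,A^{2M}$. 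Hence the periodic points of $\sigma_A$ are dense in $(N/\Gamma)_A$, so $\Omega(\sigma_A)=(N/\Gamma)_A$. By Theorem \ref{Su2 theorem1} the shift systems $((N/\Gamma)_f,\sigma_f)$ and $((N/\Gamma)_A,\sigma_A)$ are topologically conjugate, and a topological conjugacy carries nonwandering set to nonwandering set, so $\Omega(\sigma_f)=(N/\Gamma)_f$.

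Third, descend. Let $p_0:(N/\Gamma)_f\to N/\Gamma$, $\tilde x\mapsto x_0$, be the zeroth-coordinate projection; it is onto because $f$ is a surjection of a compact space. For $\tilde x\in\Omega(\sigma_f)$ and any open $U\ni x_0$, the set $p_0^{-1}(U)$ is open and contains $\tilde x$, hence $\sigma_f^{\,n}(p_0^{-1}(U))\cap p_0^{-1}(U)\neq\emptyset$ for some $n\ge1$; taking a point $\tilde y=\sigma_f^{\,n}(\tilde z)$ in this intersection gives $z_0\in U$ and $f^n(z_0)=y_0\in U$, so $f^n(U)\cap U\neq\emptyset$ and $x_0\in\Omega(f)$. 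Therefore $N/\Gamma=p_0(\Omega(\sigma_f))\subseteq\Omega(f)$, which is the assertion.

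The main obstacle is the very first step: establishing $\Omega(A)=N/\Gamma$ is where all the non-soft content sits, since it rests on the arithmetic of $\Gamma$ (density of rational points and control of denominators under $\overline{A}$) rather than on general dynamics. One can try to bypass inverse limits and argue directly on $N$ via the semiconjugacy $\overline{h}$ of Lemma \ref{Semiconjugacy} together with the topological transitivity of $A$, but the bounded — yet not arbitrarily small — equivariance defect $K$ of Lemma \ref{lm Su 2 Lemma 2.4}.(1) only produces $f^{\,n}(U)\cap U_{K'}(U)\neq\emptyset$ instead of $f^{\,n}(U)\cap U\neq\emptyset$, so the inverse-limit route above is the clean way to close the argument.
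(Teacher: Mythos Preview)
The paper does not supply its own proof of this lemma: both occurrences simply cite \cite{[Su 2]}, Lemma~5.4, with no argument given. So there is no in-paper proof to compare against; the relevant comparison is with Sumi's original argument.

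Your outline is internally correct and the inverse-limit descent in steps~2--4 is clean. The concern is circularity. You invoke Theorem~\ref{Su2 theorem1} (the inverse-limit conjugacy, i.e.\ Theorem~1 of \cite{[Su 2]}) to transport $\Omega(\sigma_A)=(N/\Gamma)_A$ over to $\sigma_f$. But in Sumi's paper the lemma you are proving sits in Section~5, \emph{before} the main theorems are established, and is one of the ingredients feeding into the proof of Theorem~1; one needs $\Omega(f)=N/\Gamma$ to obtain the inverse-limit conjugacy, not the other way around. So as an independent proof of Lemma~5.4 your route loops back on itself. Within the present paper's expository logic both results are quoted as black boxes and the circularity is invisible, but it is there.

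Sumi's direct route avoids Theorem~1 entirely. He works with the semiconjugacy $\overline{h}$ of Lemma~\ref{Semiconjugacy} on the cover together with a fixed-point count: for each $n$, the lift $\overline{f}^n$ and the hyperbolic automorphism $\overline{A}^n$ are at bounded distance, so a Nielsen/Lefschetz-type argument forces $f^n$ to have a fixed point in every Nielsen class where $A^n$ does; since the periodic points of $A$ are dense (your Step~1, via the Malcev rational structure, is exactly this ingredient), the periodic points of $f$ are dense as well, whence $\Omega(f)=N/\Gamma$. If you want your write-up to stand on its own, replace the appeal to Theorem~\ref{Su2 theorem1} by this fixed-point transfer; your Step~1 and the projection argument in Step~4 can stay as they are.
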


\begin{lemma}\label{lm elementary set}
$N/\Gamma$ is indeed an elementary set.
\end{lemma}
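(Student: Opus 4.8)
The plan is to deduce everything from the connectedness of $N/\Gamma$ together with Lemma \ref{omega n gamma} and the Topological decomposition theorem. First I would observe that $N/\Gamma$ is connected, being the continuous image of the connected Lie group $N$ under the natural projection $\pi:N\to N/\Gamma$. By Lemma \ref{omega n gamma} we have $\Omega(f)=N/\Gamma$, so the Spectral decomposition part of the Topological decomposition theorem applies to the whole space: there are finitely many pairwise disjoint $f$-invariant closed basic sets $B_1,\dots,B_l$ with $\bigcup_{i=1}^l B_i = N/\Gamma$. Since this union is finite and disjoint and each $B_i$ is closed, the complement of each $B_i$ is a finite union of closed sets, hence closed; thus each $B_i$ is clopen. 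Connectedness of $N/\Gamma$ forces $l=1$, so $N/\Gamma$ is a single basic set (in particular $f$ is topologically transitive).

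Next I would apply Bowen's decomposition part of the theorem to the basic set $B=N/\Gamma$: there exist $a>0$ and pairwise disjoint closed subsets $C_0,\dots,C_{a-1}$ with $f(C_i)=C_{i+1}$, $f^a(C_i)=C_i$, $\bigcup_{i=0}^{a-1}C_i = N/\Gamma$, and $f^a_{|C_i}:C_i\to C_i$ topologically mixing. Running exactly the same argument again — a finite disjoint closed cover of the connected space $N/\Gamma$ must be trivial — gives $a=1$. Therefore $C_0=N/\Gamma$ and $f=f^a_{|C_0}$ is topologically mixing, which is precisely the assertion that $N/\Gamma$ is an elementary set.

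I do not expect a genuine obstacle here; the only point requiring a little care is that in both decompositions the pieces are honestly closed (and nonempty), so that the elementary observation "a finite partition of a connected space into nonempty closed sets is trivial" can be invoked twice, once for the basic sets and once for the elementary sets.
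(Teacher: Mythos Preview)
Your argument is correct and, for the basic-set step, it coincides with the paper's: both note that $N/\Gamma=\pi(N)$ is connected, that $\Omega(f)=N/\Gamma$, and that the finitely many disjoint closed basic sets are therefore clopen, forcing $l=1$.

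For the elementary-set step the paper takes a slightly different route. Rather than repeating the clopen/connectedness argument, it first observes that $\overline f(e)=e$ gives a genuine fixed point $[e]=\pi(e)$ of $f$, and then uses the cycling condition $f(C_i)=C_{i+1}$: if $[e]\in C_0$ then $[e]=f([e])\in C_1$, contradicting disjointness unless $a=1$. Your approach is more uniform (the same lemma applied twice) and does not need the fixed point at this stage; the paper's fixed-point argument, on the other hand, does not rely on connectedness for the Bowen decomposition and would survive on a space that is merely known to carry a fixed point. Either way the conclusion is the same, and your caveat about nonemptiness of the pieces is exactly the point needed to make the clopen argument go through.
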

\begin{proof}
By lemma \ref{fixedpoint}, let $\overline{f}:N\rightarrow N$ be the lift of $f$ such that $\overline{f}(e)=e$. By the commuting diagram:
\begin{displaymath}
\xymatrix{N \ar[r]^{\overline{f}} \ar[d]_{\pi} & N \ar[d]^{\pi} \\ N/\Gamma \ar[r]_f & N/\Gamma}
\end{displaymath}
we have, 
\begin{equation*}
f([e])=f(\pi(e))=\pi(\overline{f}(e))=\pi(e)=[e].
\end{equation*}
Therefore, $[e]$ is a fixed point of $f$. 

By lemma \ref{omega n gamma}, $\Omega(f)=N/\Gamma$. As $N$ is connected and $\pi$ is a continuous surjection then $N/\Gamma$ is connected. In the proof of part (1) of spectral decomposition theorem, they prove that basic sets are close and open. Hence by connectedness of $\Omega(f)=N/\Gamma$, it consists of only one basic set, say $B$. On the other hand, by part (2) of spectral decomposition theorem, $N/\Gamma=B$ is the union of elementary sets. There is an elementary set, say $C$, such that $[e]\in C$. Since elementary sets are disjoint, by condition $f(C_i)=C_{i+1}$, $N/\Gamma=B$ consists of only one elementary set.
\end{proof}

\begin{lemma}[\cite{[Ao-Hi]} Remark 5.3.2 (2)] \label{Ao Remark 5.3.2 (2)}
Let $f:X\rightarrow X$ be a $TA$-map of a compact metric space and let $C$ be an elementary set of $f$. If $\tilde{u}=(u_i)\in N_f$ and $x_i\in C$ for all $i\in\mathbb{Z}$ then $W^u(\tilde{x})\cap C$ is dense in $C$.
\end{lemma}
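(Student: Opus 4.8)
The plan is, for an arbitrary $z\in C$ and $\rho>0$, to produce a point of $W^u(\tilde x)\cap C$ at distance less than $\rho$ from $z$; since $z$ and $\rho$ are arbitrary this gives the asserted density. (We read the hypothesis as $\tilde x=(x_i)\in X_f$ with the whole orbit of $\tilde x$ lying in the basic set $B$ containing $C$; since $f$ permutes the elementary sets cyclically with period $a$ and $x_0\in C$, the points $x_{-am}$ lie in $C$ for all $m\ge 0$.) The argument rests on three facts: a $TA$-map carries \emph{canonical coordinates} (local product structure) on its nonwandering set, hence on every basic and every elementary set, these being obtained from $c$-expansiveness and POTP as in \cite{[Ao-Hi]}, Chapter~3; the restriction $f^{a}|_{C}$ is topologically mixing by the Topological decomposition theorem; and the unstable set of a full orbit is the increasing union $W^u(\tilde x)=\bigcup_{m\ge 0}f^{am}\big(W^u_\epsilon(\sigma_f^{-am}\tilde x)\big)$ of forward iterates of $\epsilon$-local unstable sets, which holds for any $c$-expansive map with POTP. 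Writing $P_m:=W^u_\epsilon(\sigma_f^{-am}\tilde x)\cap C$, we have $x_{-am}\in P_m$, $f^{am}(C)=C$ (as $a\mid am$), and hence $f^{am}(P_m)\subseteq W^u(\tilde x)\cap C$.

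First I would fix an expansive constant $e$, and choose $0<\epsilon\le e/2$ together with $\delta\in(0,\rho/2)$ so small that for all $p,q\in C$ with $d(p,q)<\delta$ and every $\tilde p\in X_f$ with $(\tilde p)_0=p$, the intersection $W^u_\epsilon(\tilde p)\cap W^s_\epsilon(q)$ is a single point $[p,q]$, that $[p,q]\in C$ (local maximality of an elementary set), and that $d([p,q],q)<\rho/2$. The technical core — and the step I expect to be the main obstacle — is to show that for every $\delta>0$ there is $N$ with $f^{am}(P_m)$ \emph{$\delta$-dense in $C$} for all $m\ge N$. This rests on a uniform lower bound on the size of local unstable sets based in $C$: there is $\eta>0$ so that for every $\tilde y\in X_f$ with $y_i\in C$ for all $i$, the set $W^u_\epsilon(\tilde y)\cap C$ is not contained in the $\eta$-ball about $y_0$. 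I would prove this by contradiction, extracting from a sequence of orbits with shrinking local unstable pieces a limit orbit by compactness of $C$ and of $X_f$, and using $c$-expansiveness together with the fact that in a topologically mixing elementary set no point can carry a degenerate local unstable set (here canonical coordinates and mixing are used). Granted the bound $\eta$, the $\delta$-density of the blown-up pieces $f^{am}(P_m)$ follows from the topological mixing of $f^{a}|_{C}$ by a routine finite-covering argument on the compact set $C$.

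Finally I would conclude: apply the $\delta$-density to obtain $m$ and a point $p\in f^{am}(P_m)\subseteq W^u(\tilde x)\cap C$ with $d(p,z)<\delta$. Pick $\tilde p\in X_f$ with $(\tilde p)_0=p$ realizing $p\in W^u(\tilde x)$, i.e. $d(x_{-i},p_{-i})\to 0$; then $W^u_\epsilon(\tilde p)\subseteq W^u(\tilde x)$, because $d(p_{-i},q_{-i})\to 0$ implies $d(x_{-i},q_{-i})\to 0$. Forming the bracket with this $\tilde p$, the point $w:=[p,z]=W^u_\epsilon(\tilde p)\cap W^s_\epsilon(z)$ lies in $W^u(\tilde x)\cap C$ and satisfies $d(w,z)<\rho/2<\rho$. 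As $z\in C$ and $\rho>0$ were arbitrary, $W^u(\tilde x)\cap C$ is dense in $C$.
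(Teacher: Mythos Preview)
The paper does not supply a proof of this lemma; it is quoted verbatim from \cite{[Ao-Hi]}, Remark~5.3.2(2), so there is no in-paper argument to compare against. Your overall strategy---local product structure on $C$, mixing of $f^{a}|_{C}$, and the exhaustion $W^u(\tilde x)=\bigcup_{m\ge 0} f^{am}(P_m)$ with $P_m=W^u_\epsilon(\sigma_f^{-am}\tilde x)\cap C$---is the standard one and is essentially how Aoki--Hiraide argue.

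There is, however, a genuine gap in your $\delta$-density step. You claim that the uniform lower bound $\eta$ on the ``size'' of $P_m$, together with mixing and a ``routine finite-covering argument,'' forces $f^{am}(P_m)$ to be $\delta$-dense in $C$. But topological mixing is a statement about \emph{open} subsets of $C$, and $P_m$ is a local unstable slice, typically nowhere open in $C$; knowing only that $P_m$ is not contained in the ball $B_\eta(x_{-am})$ gives you no open set to feed into the mixing hypothesis, so the ``routine'' argument does not go through as stated. The correct bridge is again the local product structure: for every $w\in B_{\delta_0}(x_{-am})\cap C$ the bracket $[\sigma_f^{-am}\tilde x,\,w]$ lies in $P_m\cap W^s_\epsilon(w)$, so every point of $f^{am}\big(B_{\delta_0}(x_{-am})\cap C\big)$ is within $\epsilon$ (in fact within any prescribed $\epsilon'$, by the uniform forward contraction on local stable sets of a TA-map) of a point of $f^{am}(P_m)$. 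Now a finite-cover argument applied to the \emph{open} balls $B_{\delta_0}(\cdot)\cap C$ shows, via mixing, that $f^{am}$ of any such ball is $\delta/2$-dense for all large $m$ uniformly in the center, and this yields the $\delta$-density of $f^{am}(P_m)$. With this correction your proof is complete; note also that once $\delta$-density gives you $p\in W^u(\tilde x)\cap C$ with $d(p,z)<\delta<\rho/2$, your final bracket $[p,z]$ is superfluous.
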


\begin{lemma}
$\bigcup_{\gamma\in \Gamma}\overline{W}^u(\gamma;\textbf{e})$ is dense in $N$.
\end{lemma}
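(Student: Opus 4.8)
The plan is to descend to $N/\Gamma$, use the density of unstable sets inside an elementary set, and then pull the resulting density statement back up through the covering projection $\pi$.

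First I would look at the point $\tilde{x}=\tau_{\textbf{e}}(e)\in(N/\Gamma)_f$. Since $\overline{f}(e)=e$, every composition $\overline{f}^i_{\textbf{e}}$ fixes $e$, so $\tilde{x}$ is the constant sequence $(\pi(e))_{i\in\mathbb{Z}}=([e])_{i\in\mathbb{Z}}$; in particular every coordinate of $\tilde{x}$ equals $[e]$. By Lemma \ref{lm elementary set}, $N/\Gamma$ is an elementary set $C$ of the $TA$-map $f$, and $[e]\in C$. Hence Lemma \ref{Ao Remark 5.3.2 (2)} applies to $\tilde{x}$ and gives that $W^u(\tilde{x})\cap C=W^u(\tilde{x})$ is dense in $C=N/\Gamma$. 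By Lemma \ref{Ao Lemma 6.6.8 (1)}, $W^u(\tilde{x})=W^u(\tau_{\textbf{e}}(e))=\pi\big(\overline{W}^u(e;\textbf{e})\big)$, so $\pi\big(\overline{W}^u(e;\textbf{e})\big)$ is dense in $N/\Gamma$.

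It remains to transfer this to $N$. By Lemma \ref{lm gamma in w},
\begin{equation*}
\pi^{-1}\Big(\pi\big(\overline{W}^u(e;\textbf{e})\big)\Big)=\bigcup_{\gamma\in\Gamma}\overline{W}^u(e;\textbf{e})\gamma=\bigcup_{\gamma\in\Gamma}\overline{W}^u(\gamma;\textbf{e}).
\end{equation*}
Since $\pi:N\rightarrow N/\Gamma$ is a continuous open surjection, the full preimage of a dense set is dense: given a nonempty open $U\subseteq N$, the set $\pi(U)$ is nonempty and open, hence meets the dense set $\pi\big(\overline{W}^u(e;\textbf{e})\big)$, which produces a point of $U$ lying in $\pi^{-1}\big(\pi(\overline{W}^u(e;\textbf{e}))\big)$. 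Therefore $\bigcup_{\gamma\in\Gamma}\overline{W}^u(\gamma;\textbf{e})$ is dense in $N$.

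No step here is genuinely hard; the points to be careful about are checking that $\tau_{\textbf{e}}(e)$ really is the constant orbit at $[e]$, so that all its coordinates lie in the elementary set $C=N/\Gamma$ and Lemma \ref{Ao Remark 5.3.2 (2)} is applicable, and keeping straight that $\overline{W}^u(e;\textbf{e})$ is the unstable set of the lift $\overline{f}$ attached to the backward orbit $\textbf{e}$, which is exactly the object that Lemma \ref{Ao Lemma 6.6.8 (1)} identifies with $W^u(\tau_{\textbf{e}}(e))$ after applying $\pi$.
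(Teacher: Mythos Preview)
Your proof is correct and follows essentially the same route as the paper: identify $\bigcup_{\gamma\in\Gamma}\overline{W}^u(\gamma;\textbf{e})$ with $\pi^{-1}\big(W^u(\tau_{\textbf{e}}(e))\big)$ via Lemmas \ref{lm gamma in w} and \ref{Ao Lemma 6.6.8 (1)}, then use that $N/\Gamma$ is a single elementary set (Lemma \ref{lm elementary set}) together with Lemma \ref{Ao Remark 5.3.2 (2)} to get density of $W^u(\tau_{\textbf{e}}(e))$ downstairs. Your explicit justification that the preimage of a dense set under the open surjection $\pi$ is dense is a point the paper leaves implicit.
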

\begin{proof}
By lemma \ref{lm gamma in w} and lemma \ref{Ao Lemma 6.6.8 (1)} we have
\begin{equation}\label{dense rel}
\bigcup_{\gamma\in \Gamma}\overline{W}^u(\gamma;\textbf{e})=\bigcup_{\gamma\in \Gamma}(\overline{W}^u(e;\textbf{e}))\gamma=\pi^{-1}(W^u(\tau_{\textbf{e}}(e))).
\end{equation}
We have $\tau_{\textbf{e}}(e)=\big(\pi(e)\big)_{i=-\infty}^{\infty}\in (N/\Gamma)_f$. On the other hand, Since by lemma \ref{lm elementary set}, $\Omega(f)=N/\Gamma$ is an elementary set, say $C$, and for $\big(\pi(e)\big)_{i=-\infty}^{\infty}$ we have $\pi(e)\in N/\Gamma=C$ for all $i\in\mathbb{Z}$, by lemma \ref{Ao Remark 5.3.2 (2)} we have
\begin{equation*}
W^u(\tau_{\textbf{e}}(e))=W^u(\tau_{\textbf{e}}(e))\cap (N/\Gamma)=W^u(\tau_{\textbf{e}}(e))\cap C
\end{equation*}
is dense in $C=N/\Gamma$. By relation \eqref{dense rel}, we have the desired result.
\end{proof}

By lemma \ref{su1 lm12}, $\overline{h}'$ is extended to a continuous map $\tilde{h}:N\rightarrow N$. From proposition \ref{properties of h'} (1), (2) and (3), and lemma \ref{Semiconjugacy}, we have $\overline{h}=\tilde{h}$ and $\overline{h}(\gamma)=\gamma$ for all $\gamma\in\Gamma$.

\begin{prop}\label{final lemma}
For all $\gamma\in\Gamma$ and $x\in N$, $\overline{h}(x\gamma)=\overline{h}(x)\gamma$.
\end{prop}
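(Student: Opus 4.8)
The plan is to establish the identity first on the dense subset
$\mathcal D:=\bigcup_{\gamma\in\Gamma}\overline{W}^u(\gamma;\textbf{e})$ of $N$, and then to propagate it to all of $N$ by continuity. First we record that $\mathcal D$ is invariant under right translation by $\Gamma$: by Lemma \ref{lm gamma in w} we have $\overline{W}^u(\gamma_0;\textbf{e})\gamma=\overline{W}^u(\gamma_0\gamma;\textbf{e})$ for $\gamma_0,\gamma\in\Gamma$, and as $\gamma_0$ ranges over $\Gamma$ so does $\gamma_0\gamma$; hence $\mathcal D\gamma=\mathcal D$. Recall also that, as observed just before the statement, the continuous extension $\tilde h$ of $\overline{h}'$ coincides with $\overline{h}$, so $\overline{h}$ agrees with $\overline{h}'$ on $\mathcal D$.

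Next we would verify the identity on $\mathcal D$. Let $x\in\overline{W}^u(\gamma_0;\textbf{e})$ with $\gamma_0\in\Gamma$, and let $\gamma\in\Gamma$. By the previous paragraph $x\gamma\in\overline{W}^u(\gamma_0\gamma;\textbf{e})$, so the defining formula for $\overline{h}'$ gives
\[
\overline{h}'(x\gamma)=\overline{h}\big((x\gamma)(\gamma_0\gamma)^{-1}\big)(\gamma_0\gamma)=\overline{h}(x\gamma_0^{-1})\,\gamma_0\gamma=\overline{h}'(x)\,\gamma,
\]
using $(x\gamma)(\gamma_0\gamma)^{-1}=x\gamma_0^{-1}$ and $\overline{h}'(x)=\overline{h}(x\gamma_0^{-1})\gamma_0$; that $\overline{h}'$ does not depend on the chosen $\gamma_0$ is Proposition \ref{lm su1 9}(4). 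Since $\overline{h}=\overline{h}'$ on $\mathcal D$, we obtain $\overline{h}(x\gamma)=\overline{h}(x)\gamma$ for all $x\in\mathcal D$ and $\gamma\in\Gamma$.

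Finally we pass to an arbitrary $x\in N$. By the lemma showing that $\mathcal D$ is dense in $N$, pick $x_n\in\mathcal D$ with $x_n\to x$. The right translation $R_\gamma:y\mapsto y\gamma$ is continuous, in fact an isometry, since $D$ is $\Gamma$-invariant, so $x_n\gamma\to x\gamma$; and $\overline{h}$ is continuous. Hence
\[
\overline{h}(x\gamma)=\lim_{n\to\infty}\overline{h}(x_n\gamma)=\lim_{n\to\infty}\overline{h}(x_n)\,\gamma=\Big(\lim_{n\to\infty}\overline{h}(x_n)\Big)\gamma=\overline{h}(x)\,\gamma,
\]
as desired.

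There is no serious obstacle remaining: the genuinely hard inputs, namely uniform continuity of $\overline{h}'$ (Proposition \ref{su1 lm12}), density of $\mathcal D$, the identification $\overline{h}=\tilde h$, and the special-map property underlying Proposition \ref{lm su1 9}, are already in place, so the proof is essentially bookkeeping. The only points deserving attention are the right-$\Gamma$-invariance of the domain $\mathcal D$ and the continuity (isometry) of right translations, which is exactly what lets the translation identity survive the limiting process. Once $\overline{h}(x\gamma)=\overline{h}(x)\gamma$ is known, $\overline{h}$ descends to a continuous map $h:N/\Gamma\to N/\Gamma$ which is a homeomorphism, since $\overline{h}$ is one by Lemma \ref{Semiconjugacy} in the non-expanding case, and which satisfies $A\circ h=h\circ f$, finishing the Main Theorem.
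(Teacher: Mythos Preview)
Your proof is correct. Both you and the paper argue by first establishing the identity on the dense set $\mathcal D=\bigcup_{\gamma\in\Gamma}\overline{W}^u(\gamma;\textbf{e})$ and then extending by continuity, but the arguments on $\mathcal D$ differ. The paper proceeds geometrically: it uses Lemma~\ref{lm Su 2 Lemma 2.4}(4) to place $\overline{h}(x\gamma)$ in $\overline{L}^s(\overline{h}(x)\gamma)$, then uses Remark~\ref{rm w l} together with $\overline{h}(\gamma_x)=\gamma_x$ and $\overline{h}(\gamma_x\gamma)=\gamma_x\gamma$ to place both $\overline{h}(x\gamma)$ and $\overline{h}(x)\gamma$ in the same unstable leaf $\overline{L}^u(\gamma_x\gamma)$, and concludes via the single-point intersection $\overline{L}^s\cap\overline{L}^u$. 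You instead argue algebraically: once $\overline{h}=\overline{h}'$ on $\mathcal D$ is known, the very definition $\overline{h}'(y)=\overline{h}(y\eta^{-1})\eta$ for $y\in\overline{W}^u(\eta;\textbf{e})$, applied with $\eta=\gamma_0\gamma$, yields the $\Gamma$-equivariance in one line via the cancellation $(x\gamma)(\gamma_0\gamma)^{-1}=x\gamma_0^{-1}$. Your route is shorter and avoids revisiting the leaf structure, at the price of leaning more explicitly on the identification $\overline{h}|_{\mathcal D}=\overline{h}'$; the paper's route makes the role of the hyperbolic splitting more visible. Both are valid and rest on the same hard inputs (Propositions~\ref{lm su1 9}, \ref{properties of h'}, \ref{su1 lm12}, density of $\mathcal D$, and Lemma~\ref{Semiconjugacy}).
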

\begin{proof}
According to lemma \ref{lm Su 2 Lemma 2.4}.(4), we have
\begin{equation}\label{eq1 final lemma}
\overline{h}(x\gamma)\in\overline{L}^s(\overline{h}(x)\gamma).
\end{equation}
Suppose that $x\in\bigcup_{\gamma\in \Gamma}\overline{W}^u(\gamma;\textbf{e})$. Then there is $\gamma_x\in\Gamma$ such that $x\in\overline{W}^u(\gamma_x;\textbf{e})$. For each $\gamma\in\Gamma$ we have
\begin{equation*}
x\gamma\in\overline{W}^u(\gamma_x;\textbf{e})\gamma=\overline{W}^u\big(\gamma_x\gamma;\textbf{e}\big).
\end{equation*}
Thus
\begin{align}\label{eq2 final lemma}
\overline{h}(x\gamma) & \in\overline{h}\bigg(\overline{W}^u\big(\gamma_x\gamma;\textbf{e}\big)\bigg) \nonumber \\
\text{(by Remark \ref{rm w l})}\quad &=\overline{L}^u\big(\overline{h}(\gamma_x\gamma)\big) \nonumber \\
&=\overline{L}^u(\gamma_x\gamma).
\end{align}
On the other hand, 
\begin{align}\label{eq3 final lemma}
\overline{h}(x)\gamma&\in \overline{h}(\overline{W}^u(\gamma_x;\textbf{e}))\gamma \nonumber \\
\text{(by Remark \ref{rm w l})}\quad &=\big(\overline{L}^u(\overline{h}(\gamma_x))\big)\gamma \nonumber \\
&=\overline{L}^u(\gamma_x)\gamma \nonumber \\
\text{(by Lemma \ref{lm gamma in l})}\quad &=\overline{L}^u(\gamma_x\gamma).
\end{align}
By \eqref{eq3 final lemma}, we have $\overline{L}^u(\gamma_x\gamma)=\overline{L}^u\big(\overline{h}(x)\gamma\big)$. Therefore, by \eqref{eq2 final lemma} we have
\begin{equation}\label{eq4 final lemma}
\overline{h}(x\gamma)\in \overline{L}^u\big(\overline{h}(x)\gamma\big).
\end{equation}
By \eqref{eq1 final lemma} and \eqref{eq4 final lemma} we have 
\begin{equation}
\overline{h}(x\gamma)\in \overline{L}^u\big(\overline{h}(x)\gamma\big) \cap \overline{L}^s\big(\overline{h}(x)\gamma\big)=\{\overline{h}(x)\gamma\}.
\end{equation}
Thus for each $x\in\bigcup_{\gamma\in \Gamma}\overline{W}^u(\gamma;\textbf{e})$ we have $\overline{h}(x\gamma)=\overline{h}(x)\gamma$. Since $\overline{h}$ is continuous and $\bigcup_{\gamma\in \Gamma}\overline{W}^u(\gamma;\textbf{e})$ is dense in $N$, we have the desired result.
\end{proof}

\textbf{The end of main theorem's proof:}
According to proposition \ref{final lemma}, $\overline{h}$ induces a homeomorphism $h:N/\Gamma\rightarrow N/\Gamma$ such that $h\circ \pi=\pi \circ \overline{h}$. i.e. the following diagram commutes:
\begin{displaymath}
\xymatrix{N \ar[r]^{\overline{h}} \ar[d]_{\pi} & N \ar[d]^{\pi} \\ N/\Gamma \ar[r]_h & N/\Gamma}
\end{displaymath}
$h$ is the conjugacy between $f$ and $A$. For if $x\in N/\Gamma$ then there is $y\in N$ such that $x=\pi(y)$ and
\begin{align*}
h\circ f(x)&=h\circ f(\pi(y))=h(f\circ\pi(y))=h(\pi\circ \overline{f}(y))\\
&=h\circ\pi(\overline{f}(y))=\pi \circ \overline{h}(\overline{f}(y))=\pi(\overline{h}\circ\overline{f}(y))\\
&=\pi(\overline{A}\circ\overline{h}(y))=\pi\circ\overline{A}(\overline{h}(y))=A\circ \pi(\overline{h}(y))\\
&=A(\pi\circ \overline{h}(y))=A(h\circ\pi(y))=A\circ h (\pi(y))\\
&=A\circ h(x).
\end{align*}
So the Main Theorem is proved.\\

\noindent{\bf Proof of Corrollary \ref{Mcor}.} As mentioned in section 2, every endomorphism of a compact metric space is a covering map. Every Anosov endomorphism is a $TA$-map (see \cite{[Ao-Hi]} Theorem 1.2.1). Every diffeomorphism is special (since it is injective). For every diffepmrphism or special expanding map of a nil-manifold, by (repaired for nil-manifolds) Theorem \ref{Su2 theorem2}, it is conjugate to a hyperbolic nil-automorphism or an expanding nil-endomorphism, respectively, which are hyperbolic nil-endomorphisms. In Theorem \ref{Theorem Main}, we prove the case that $f$ is not injective or expanding. So in this case $f$ is conjugate to a hyperbolic nil-endomorphism too.


\end{document}